\documentclass[11pt,a4]{amsart}
\usepackage{amsthm,amssymb, amsmath,epsfig,color, comment,float, graphics, bm}
\usepackage[colorlinks=true,allcolors=blue]{hyperref}
\usepackage[ruled,vlined]{algorithm2e}
\usepackage{float}
\usepackage{lipsum}
\usepackage{setspace}
\usepackage{caption}
\usepackage{subcaption}
\usepackage{multirow}
 

\newcommand{\blue}{\color{darkblue}}

\newcommand{\black}{\color{black}}

\definecolor{darkblue}{rgb}{.1, 0.1,.8}
\definecolor{darkgreen}{rgb}{0,0.8,0.2}
\definecolor{darkred}{rgb}{.8, .1,.1}

\newtheorem{lemma}{Lemma}[section]

\newtheorem{theorem}[lemma]{Theorem}

\newtheorem{proposition}[lemma]{Proposition}
\newtheorem{definition}[lemma]{Definition}
\newtheorem{corollary}[lemma]{Corollary}
\newtheorem{example}[lemma]{Example}
\newtheorem{exercise}[lemma]{Exercise}
\newtheorem{remark}[lemma]{Remark}
\newtheorem{fig}[lemma]{Figure}
\newtheorem{tab}[lemma]{Table}

\newcommand{\bfQ}{{\bf Q}}

\newcommand{\bth}{\begin{theorem}}
\newcommand{\ethe}{\end{theorem}}

\newcommand{\bre}{\begin{remark}\em }
\newcommand{\ere}{\end{remark}}

\newcommand{\ble}{\begin{lemma}}
\newcommand{\ele}{\end{lemma}}

\newcommand{\pp}{point process}
\newcommand{\bde}{\begin{definition}}
\newcommand{\ede}{\end{definition}}
\newcommand{\bco}{\begin{corollary}}
\newcommand{\eco}{\end{corollary}}

\newcommand{\bpr}{\begin{proposition}}
\newcommand{\epr}{\end{proposition}}

\newcommand{\bexer}{\begin{exercise}}
\newcommand{\eexer}{\end{exercise}}

\newcommand{\bexam}{\begin{example}}
\newcommand{\eexam}{\end{example}}

\newcommand{\bfi}{\begin{fig}}
\newcommand{\efi}{\end{fig}}

\newcommand{\btab}{\begin{tab}}
\newcommand{\etab}{\end{tab}}

\newcommand{\rv}{random variable}

\newcommand{\sign}{{\rm sign}}

\newcommand{\var}{{\rm Var}}

\newcommand{\cov}{{\rm Cov}}

\newcommand{\bfTh}{\mbox{$\pmb{\Theta}$}}

\newcommand{\beao}{\begin{eqnarray*}}
\newcommand{\eeao}{\end{eqnarray*}\noindent}

\newcommand{\beam}{\begin{eqnarray}}
\newcommand{\eeam}{\end{eqnarray}\noindent}

\newcommand{\beqq}{\begin{equation}}
\newcommand{\eeqq}{\end{equation}\noindent}

\newcommand{\bce}{\begin{center}}
\newcommand{\ece}{\end{center}}
\newcommand{\bfzero}{{\bf 0}}
\newcommand{\barr}{\begin{array}}
\newcommand{\earr}{\end{array}}

\newcommand{\std}{\stackrel{d}{\rightarrow}}

\newcommand{\eqd}{\stackrel{d}{=}}

\newcommand{\vague}{\stackrel{\lower0.2ex\hbox{$\scriptscriptstyle
                    \it{v} $}}{\rightarrow}}
\newcommand{\weak}{\stackrel{\lower0.2ex\hbox{$\scriptscriptstyle
                    \it{w} $}}{\rightarrow}}
\newcommand{\what}{\stackrel{\lower0.2ex\hbox{$\scriptscriptstyle
                    \it{\hat{w}} $}}{\rightarrow}}

\newcommand{\bdis}{\begin{displaymath}}
\newcommand{\edis}{\end{displaymath}\noindent}

\renewcommand{\P}{\mathbb P}
\newcommand{\R}{\mathbb{R}}
\newcommand{\bfX}{\mathbf{X}}

\newcommand{\nto}{n\to\infty}

\newcommand{\wt}{\widetilde}

\newcommand{\vep}{\epsilon}

\newcommand{\fct}{function}

\newcommand{\seq}{sequence}

\newcommand{\bfx}{{\bf x}}
\newcommand{\bfB}{{\bf B}}

\newcommand{\bfy}{{\bf y}}
\newcommand{\bfA}{{\bf A}}
\newcommand{\bfG}{{\bf G}}

\newcommand{\bfZ}{{\bf Z}}

\newcommand{\bfa}{{\bf a}}

\def\1{\ensuremath{\mathrm{1}\hspace{-.35em} \mathrm{1}}} 

\def\E{{\mathbb E}}
\def\Cov{{\rm Cov}}
\def\Var{{\rm Var}}

\def\P{{\mathbb{P}}}
\def\R{\mathbb{R}}
\def\Z{\mathbb{Z}}

\def\a{{\alpha}}

\renewcommand{\le}{\ensuremath{\leqslant}}
\renewcommand{\ge}{\ensuremath{\geqslant}}

\newcommand{\introo}[2]{{\left]{#1,\,#2\,}\right[\kern1pt}}

\newcommand{\intrfo}[2]{{\left[{#1,\,#2}\right[\kern1pt}}



%
\begin{document}
\title{On the asymptotics of extremal $\ell^p-$blocks cluster inference }
\date{}
\today

\thanks{
}
\author[G. Buritic\'a]{Gloria Buritic\'a}
\address{MIA Paris-Saclay, Universit\'e Paris Saclay, AgroParisTech, INRAE, 22 place de l’Agronomie,
91123, Palaiseau, France}
\email{gloria.buritica@agroparistech.fr}

\author[O. Wintenberger]{Olivier Wintenberger}
\address{LPSM, Sorbonne Universit\'es\\
UPMC Universit\'e Paris 06\\ 
F-75005, Paris\\ France}
\address{Institut Pauli CNRS, Vienna University, Oskar Morgensern Platz 1, Wien 1090, Austria}
\email{olivier.wintenberger@sorbonne-universite.fr}

\begin{abstract}
Extremes occur in  stationary regularly varying time series as short periods with several large observations, known as extremal blocks. 
We study cluster statistics summarizing the behavior of functions acting on these extremal blocks. Examples of cluster statistics are the extremal index, cluster size probabilities, and other cluster indices.
The purpose of our work is twofold. First, we state the asymptotic normality of block estimators for cluster inference based on consecutive observations with large $\ell^p-$norms, for $p > 0$. 
The case $p=\a$, where $\a > 0$ is the tail index of the time series, has specific nice properties thus we analyze the asymptotics of block estimators when approximating $\a$ using the Hill estimator. 
Second, we 
verify the conditions we require on classical models such as linear models and solutions of  stochastic recurrence equations. 
Regarding linear models, we prove that the asymptotic variance of classical index cluster-based estimators is null as first conjectured in \cite{hsing:1993}.
We illustrate our findings on simulations. 
\end{abstract}
\subjclass{Primary 60G70  ;   Secondary 60F10 62G32 60F05 60G57}
\maketitle

\section{Introduction}
We study stationary heavy-tailed time series $(\bfX_t)$ in $(\mathbb{R}^d,|\cdot|)$, with regularly varying distributions, and tail index $\alpha > 0$; cf. \cite{basrak:segers:2009}, a formal definition is conferred to  Section~\ref{sec:rv}.   
In this framework, extremal observations cluster:
an extreme value triggers a short period with numerous large observations.
This behavior is known to perturb classical inference procedures tailored for independent observations like high quantile inference; see \cite{embrechts:kluppelberg:mikosch:1997}.
This clustering effect can be summarized by the {\it extremal index}, initially introduced in \cite{leadbetter:1983} and \cite{leadbetter:lindgren:rootzen:1983}. We can interpret it as the inverse of the mean number of consecutive exceedances above a high threshold in a short period of time. 
In this article, we aim to infer statistics of the clustering effect by letting functionals act on consecutive observations with extremal behavior. For example, we can recover the extremal index from this setting and also other important indices of the extremes of the series. 
%

\par

For extremal cluster inference, we consider a sample $\bfX_{[1,n]}$ together with a sequence $(b_n)$, and we define the sample of disjoint blocks $(\mathcal{B}_j)_{j=1,\dots,m_n}$ as blocks of consecutive observations: 
\beam \label{eq:blocks}
\mathcal{B}_j &:=& (\bfX_{(j-1)b_n+1},\dots,\bfX_{jb_n}) \;= \; \bfX_{(j-1)b_n +[ 1,b_n]},
\eeam 
such that $b_n \to \infty$, $m_n = \lfloor n/b_n \rfloor \to \infty$, as $n \to  \infty$. 
 Following the $p-$clusters theory developed in \cite{buritica:mikosch:wintenberger:2021},
 the extremal behavior of the series is modeled by the conditional behavior of a block $\mathcal{B}_j$ given that its ${\ell}^p-$norm is large:
\beam\label{limit:cluster:process:tcl:intro}
    \P( \mathcal{B}_1/x_{b_n} \in A \, | \, \|\mathcal{B}_1\|_p > x_{b_n}) &\xrightarrow[]{w}& \P( Y\bfQ^{(p)} \in A\,), \qquad n \to  \infty,
\eeam
such that $Y$ is independent of the $p-$cluster $\bfQ^{(p)} \in {\ell}^p$, $\P(Y > y) = y^{-\alpha},$ for $y > 1$, and $\|\bfQ^{(p)}\|_p = 1$ a.s., for $p\in (0,\infty]$. The weak convergence holds for a family of shift-invariant continuity sets $A \subset \ell^p$, and $(x_n)$ is a suitable sequence satisfying $\P(\|\mathcal{B}_1\|_p > x_{b_n}) \to 0$, as $n \to \infty$.
The limit distribution in the right-hand side of \eqref{limit:cluster:process:tcl:intro} summarizes the extremal behavior of regularly varying $\ell^p-$blocks.
{
In a nutshell, the $(\alpha)-$Pareto component $Y$ models the magnitude of the rescaled extremal block $\|\mathcal{B}_1\|_p/x_{b_n}$, and the $p-$cluster $\bfQ^{(p)}$  describes the extremal dependencies from the normalized block $\mathcal B_1/\|\mathcal{B}_1\|_p$ when its $\ell^p-$norm reaches extreme levels.
In this way, $\bfQ^{(p)}$ allows us to describe 
the appearance of consecutive extremes in regularly varying time series.
}
\par
In this article we study the inference of $p-$cluster statistics of the form
\beam\label{eq:statistic:1}\label{eq:statistic}
\quad f^{\bfQ}(p) &=&  \E[f( Y\bfQ^{(p)})]\,,
\eeam
for suitable $\ell^p-$continuity functions $f:\ell^p \to \mathbb{R}$ which are invariant to the shift operator of sequences. 
{Examples of shift-invariant functions are 
$p-$norms, and also coordinate-wise averages.
These functions are typically good summaries of the extremal clustering behavior of blocks.
To stress the relation of the functional $f$ with the $p-$cluster statistic in~\eqref{eq:statistic:1} it is convenient to write $f = f(p)$.}
To infer these statistics, we use the disjoint block estimators proposed in \cite{buritica:mikosch:wintenberger:2021} defined as
\beam\label{eq:estimator:cluster:1}\label{eq:estimator:cluster}
\widehat{f^\bfQ}(p)   &:=& \frac{1}{k_n} \sum_{t=1}^{m_n} f({\mathcal B}_{t}/\|{\mathcal B}\|_{p,({k_n+1})})\1(\|{\mathcal B}_{t}\|_p >  \|{\mathcal B}\|_{p,({k_n+1})}),
\eeam 
where $ 
\|{\mathcal B}\|_{p,(1)} \ge  \|{\mathcal B}\|_{p,(2)} \ge  \dots \ge  \|{\mathcal B}\|_{p,(m_n)},$
denotes the sequence of order statistics of the $\ell^p-$norms of blocks defined in \eqref{eq:blocks}, and {$(k_n)$ is an integer sequence determining the number of extremal blocks that we select for inference, and $k_n \to \infty$, as $n \to \infty$. }
\par 
{ The case $p=\a$ is particularly relevant for two reasons. 
The first reason is that under mixing and anti-clustering conditions, choosing $(a_n)$ satisfying $n\,\P(|\bfX_1|>a_n)\to 1$, as $\nto$, \cite{buritica:meyer:mikosch:wintenberger:2021} prove that 
\beam\label{eq:ppconv}
\sum_{t=1}^n\vep_{a_n^{-1}\bfX_t} &\std& \sum_{i=1}^\infty \sum_{j=-\infty}^\infty\vep_{\Gamma_i^{-1/\a}\bfQ_{ij}}\,, \qquad \nto\,,
\eeam
where $\Gamma_1\le \Gamma_2\le \cdots $ are the points of an homogeneous Poisson process, $(\bfQ_{ij})_j$ are independent copies of the $\a-$cluster process $\bfQ:=\bfQ^{(\a)}$ independent of $(\Gamma_i)$. The extremal cluster dependencies of the series are fully modeled using the spectral cluster process $\bfQ$, and from it one can recover the distribution of $\bfQ^{(p)}$, for every $p \in (0,\infty]$,
using the change-of-norms equation given by \cite{buritica:mikosch:wintenberger:2021} and recalled in \eqref{eq:change:of:norms:2}. 
{In practice this means $\a-$cluster inference allows us to infer any statistic in the form of \eqref{eq:statistic}.
{Moreover, the choice $p=\alpha$ is also ideal for selecting extremal blocks in \eqref{eq:estimator:cluster:1} as it is less susceptible to local serial dependencies; see \cite{buritica:mikosch:wintenberger:2021} for a broader discussion.}
}
\par
The main goal of this article is to establish the asymptotic normality of the block estimators from Equation~\eqref{eq:estimator:cluster:1}, tailored for cluster inference. We state moment, mixing and bias assumptions yielding the existence of a sequence $(k_n)$, satisfying $k_n \to \infty$, $m_n/k_n \to \infty$ such that 
\beam\label{eq:central:limit:2} 
\sqrt{ k_n}\big(\, \widehat{f^\bfQ}(p)  - f^\bfQ(p) \, \big) &\xrightarrow[]{d}&  \mathcal{N}\big(0, \Var(f(Y\bfQ^{(p)}) )\, \big)\,, \quad n \to  \infty,
\eeam 
and the limit is a centered Gaussian distribution. 
Our inference methodology can be viewed as a Peaks Over Threshold over order statistics of blocks. 
For $p=\a$, fixing $k$ and letting first $\nto$ in \eqref{eq:estimator:cluster} implies {heuristically} from \eqref{eq:ppconv} that
\beao
\widehat{f^\bfQ}(\a)   
&\approx& \frac{1}{k} \sum_{i=1}^{k} f((\Gamma_i/\Gamma_{k+1})^{-1/\a}({\bfQ}_{it})_t)\,.
\eeao
Then, the simple expression of the asymptotic variance in \eqref{eq:central:limit:2} follows as $(\Gamma_i/\Gamma_{k+1})\stackrel{d}{=}(U_{k,i})$ where $U_{k,1}<\cdots<U_{k,k}$ are the ordered statistics of iid uniformly distributed $U_j$, $1\le j\le k$, and $U^{-1/\a}\stackrel{d}{=}Y$. This heuristic is extended to any $p-$cluster ${\bfQ}^{(p)}$, $p\in(0, \infty]$, via the change-of-norms in \eqref{eq:change:of:norms:2}, { and rigorously proved in this article using the theory of the tail empirical process for time series; cf. \cite{kulik:soulier:2020}.
}
\par
In general, for $p-$cluster inference, the function $f = f(p)$ can involve the tail-index $\alpha $ in its expression, meaning $f(p)  = f_\a(p)$. Furthermore, we already mentioned that the choice of $p=\alpha$ has 
{the advantage of being robust to serial dependencies and to fully characterize clusters of extreme values (cf. \cite{buritica:mikosch:wintenberger:2021}),} 
thus to implement these procedures we replace $\alpha$ with an estimate $\widehat \alpha$.
We then show the asymptotic normality of the $p-$cluster estimator $\widehat f^{\bfQ}_{\widehat \a}(p)$ when we let $1/\widehat \a$ equal the classical Hill estimator, and
we extend the analysis to cover ${\widehat \a}-$cluster inference.
Furthermore, we conduct simulations to illustrate that $\ell^{\widehat \a}-$block estimators  
are competitive both in terms of bias and variance for finite sample sizes. 
 \black 
\par 
Our asymptotic results highlight how introducing $\ell^p-$norm block order statistics in \eqref{eq:estimator:cluster:1}, instead of order statistics of the sample $(|\bfX_t|)$ as in \cite{drees:neblung:2021,cissokho:kulik:2021}, can lead to a better asymptotic variance for cluster inference. We give examples of variance reduction in the case of linear models with short-range dependence, for inference of classical indices. In our examples, the asymptotic variance of linear models $\Var(f(Y\bfQ^{(p)}) )$ is null because of the deterministic properties of the spectral $p-$cluster process of linear models. For linear models, the advantage of replacing thresholds with block maxima records was previously investigated in \cite{hsing:1993}. Existing works \cite{drees:rootzen:2010,drees:neblung:2021,cissokho:kulik:2021,kulik:soulier:2020} following \cite{hsing:1993} focus on cluster of exceedances inference such that $p=\infty$. Our asymptotic result comforts and extends the heuristics presented in \cite{hsing:1993} for $p=\infty$ and linear models to the case $p < \infty$ and general models. 
To prove the asymptotic normality of block estimators, we rely on the theory of empirical processes \cite{vandervaart:wellner:1996}, but adapted to block estimators. 
For this purpose, we build on the modern overview in \cite{kulik:soulier:2020}. 
To handle the asymptotics of extremal $\ell^p-$blocks, we build on the large deviation principles
studied in \cite{buritica:mikosch:wintenberger:2021}, and appeal to the $p-$cluster processes theory therein. 
\par 
\par

\par
{
The article is organized as follows. Preliminaries on mixing coefficients, regular variation, and the $p-$cluster theory of stationary time series are compiled in Section~\ref{sec:preliminaries:2}.
In Section~\ref{sec:main:result} we present our main result in Theorem~\ref{thm:main}, stating the asymptotic normality of the block estimators introduced in Equation \eqref{eq:estimator:cluster:1}. We work under mixing, moment, and bias conditions on the series that we also present in Section~\ref{sec:main:result}.
Section~\ref{sec:examples:cluster} studies examples of extremal cluster inference such as estimation of the extremal index, the cluster size probabilities, and the cluster index for sums. We conclude by verifying our conditions on classical models such as linear processes and stochastic recurrence equations in Section~\ref{sec:examples}. 
In the case of linear models with short-range dependence, Theorem~\ref{eq:thm:linear:tcl} states that the $\ell^p-$block estimators of all the aforementioned quantities have null-asymptotic variance. Thereby, they are super-efficient for cluster inference of important indices as conjectured by \cite{hsing:1993} for $p=\infty$. We illustrate the finite-sample performances of our estimators in Section~\ref{sec:numerical:experiments}. All proofs are deferred to Apendices~\ref{sec:consistency},  \ref{sec:asymptotic:normality}, \ref{sec:proof:thm:clt},
\ref{sec:profs:app:A}, \ref{sec:appendix:C}, \ref{sec:appendix:E}, and \ref{section:models}.
}


\subsection{Notation}
We consider stationary time series $(\bfX_t)$ taking values in $\mathbb{R}^d$, that we endow with a norm $|\cdot|$. Let $p\in (0,\infty]$, and $(\bfx_t) \in (\mathbb{R}^d)^\mathbb{Z}$. Define the $p-$modulus function $\|\cdot\|_p: (\mathbb{R}^d)^{\mathbb{Z}} \to [0,\infty]$ as
\beao\label{eq:norm} 
 \|(\bfx_t)\|^p_p &:=& \sum_{t \in \mathbb{Z}}|\bfx_t|^p\,,
\eeao 
and define the sequential space $\ell^p$ as 
$
\ell^p := \{ (\bfx_t) \in (\mathbb{R}^d)^{\mathbb{Z}} : \|(\bfx_t)\|_p^p < \infty\}\,,$
with the convention that, for $p=\infty$, the space $\ell^\infty$ refers to sequences with finite supremum norm. 
For any $p \in (0,\infty]$, the $p-$modulus functions induce a distance $d_p$ in $\ell^p$, and for $p \in [1,\infty]$, it defines a norm. Abusing notation, we call them all $\ell^p-$norms. Let $\tilde{\ell}^p = \ell^p/\sim$ be the shift-invariant quotient space where $(\bfx_t) \sim (\bfy_t)$ if and only if there exists $k \in \mathbb{Z}$ such that $\bfx_{t-k} = \bfy_t$, $t \in \mathbb{Z}$. We also consider the metric space $(\tilde{\ell}^p,\tilde{d}_p)$ such that for $[\bfx] , [\bfy] \in \tilde{\ell}^p$,
\beao 
  \tilde{d}_p([\bfx], [\bfy]) &:=& \inf_{k \in \mathbb{Z}}\{ d_p( \bfx_{t-k}, \bfy_t) , (\bfx_t) \in [\bfx], (\bfy_t) \in [\bfy] \},
\eeao 
and without loss of generality, we write an element $[\bfx]$ in $\tilde{\ell}^p$ also as $(\bfx_t)$. Further details on the shift-invariant spaces are deferred to \cite{buritica:mikosch:wintenberger:2021,basrak:planinic:soulier:2018}. 
\par 
The operator norm for $d\times d$ matrices, ${\bfA} \in \mathbb{R}^{d\times d}$, is defined as $|{\bfA}|_{op}:= \sup_{|\bfx| = 1}|\bfA \bfx|.$ The truncation operations of $(\bfx_t)$ at the level $\epsilon$, for $\epsilon > 0$, are defined by \beao
(\overline{\bfx}_t^\epsilon)&:=&(\bfx_t \1_{|\bfx_t|\le\epsilon})\,, \qquad \qquad (\underline{\bfx_t}_\epsilon )\; :=\; (\bfx_t \1_{|\bfx_t|>\epsilon})\,.
\eeao 
The notation $a \land b$ denotes the minimum between two constants $a, b \in \mathbb{Z}$, and $a \lor b$ denotes its maximum. We write $\log^+(x) := \log(x)\lor 0$, for $x \in (0,\infty)$. We sometimes write $\bfx$ for the sequence $\bfx := (\bfx_t) \in (\mathbb{R}^d)^{\mathbb{Z}}$.
Furthermore, for $a,b, \in \mathbb{R}$, and $a \le b$, we write as $\bfx_{[a,b]}$ the vector $(\bfx_t)_{t=a,\dots,b}$ taking values in $(\mathbb{R}^d)^{b-a+1}$. 
We write $\bfx_{[a,b]} \in \tilde{\ell}^p$, which means we take the natural embedding of $\bfx_{[a,b]}$ in $\tilde{\ell}^p$ defined by assigning zeros to undefined coefficients. It will be convenient
to write $\mathcal{G}_{+}(\tilde{\ell}^p)$ 
for the continuous non-negative \fct s on $(\tilde{\ell}^p,\tilde{d}_p)$ which vanish in a neighborhood of the origin.
\par 

\section{Preliminaries}\label{sec:preliminaries:2}
\subsection{Mixing coefficients}\label{definition:mixing}
Let $(\bfX_t)$ be an $\mathbb{R}^d-$valued strictly stationary time series defined over a probability space $( (\mathbb{R}^d)^{\mathbb{Z}}, \mathcal{A}, \P)$. The properties of stationary sequences are usually studied through mixing coefficients. Denote the past and future $\sigma-$algebras by 
\beao
\mathcal{F}_{t\le 0} \quad :=\quad   \sigma( (\bfX_t)_{t\le 0}), \qquad \mathcal{F}_{t\ge h} \quad:=\quad \sigma( (\bfX_t)_{t \ge h}),\qquad h\ge 1\,,
\eeao
respectively. We recall the definition of the mixing coefficients $(\beta_h)$ below
\beao
    \beta_h
        &:=& d_{TV}\big( \, \P_{ \mathcal{F}_{t \le 0} \otimes \mathcal{F}_{t \ge h} }\, , \,  \P_{ \mathcal{F}_{t \le 0}} \otimes \P_{\mathcal{F}_{t \ge h} } \,\big),
\eeao 
where $d_{TV}(\cdot, \cdot)$ is the total variation distance between two probability measures:
$( (\mathbb{R}^d)^{\mathbb{Z}}, \mathcal{A}, \P_1)$, $( (\mathbb{R}^d)^{\mathbb{Z}}, \mathcal{A}, \P_2)$, and
$
\P_1\otimes\P_2(A \times B) := \P_1(A)\P_2(B),
$
for $A, B \in \mathcal{A}$. For a summary on mixing conditions see \cite{bradley:2005,dedecker:doukhan:lang:rafael:louhichi:prieur:2007,rio:2017}. 
%


\bre\label{remark:markov:beta}
A detailed interpretation of the  $\beta-$mixing coefficients $(\beta_t)$ in terms of the total variation distance can be found in Chapter 1.2 in \cite{dedecker:doukhan:lang:rafael:louhichi:prieur:2007}. These mixing coefficients are well adapted while working with Markov processes. Indeed, a strictly stationary Harris recurrent Markov chain $(\bfX_t)$, satisfies $\beta_t \to 0$ exponentially fast as $t \to  \infty$; see Theorem 3.5 in \cite{bradley:2005}.  
\ere

\par 

\subsection{Regular variation}\label{sec:rv} 
We consider stationary time series $(\bfX_t)$ taking values in $(\mathbb{R}^d,|\cdot|)$ and regularly varying with tail index $\alpha > 0$: all its finite-dimensional vectors are multivariate regularly varying of the same index. In this case we write $(\bfX_t)$ satisfies $\bf RV_\alpha$. Borrowing the ideas in \cite{basrak:segers:2009}, $(\bfX_t)$ satisfies $\bf RV_\alpha$ if and only if, for all $h \ge 0$, there exists a vector $(\bfTh_t)_{|t| \le h}$, taking values in $(\mathbb{R}^{d})^{2h+1}$ such that 
\beam\label{eq:rv} 
 \P(x^{-1}(\bfX_t)_{|t| \le h} \in \cdot \,|\, |\bfX_0| > x) &\xrightarrow[]{d}& \P(Y (\bfTh_t)_{|t| \le h} \in \cdot), \quad x \to \infty,
\eeam 
where $Y$ is independent of $(\bfTh_t)_{|t| \le h}$ and $\P(Y > y ) = y^{-\alpha}, y > 1$. We call the sequence $(\bfTh_t)$, taking values in $(\mathbb{R}^d)^{\mathbb{Z}}$, the spectral tail process. 
\par 
The time series $(\bfTh_t)$ does not inherit the stationarity property of the series. Instead, the time-change formula of \cite{basrak:segers:2009} holds: for any $s,t \in \mathbb{Z}, s \le 0 \le t$ and for any measurable bounded function $f:(\mathbb{R}^d)^{t-s+1} \to \mathbb{R}$,
\begin{align}\label{eq:time:change:formula:1}
    \E[f(\bfTh_{s-i}, \dots, \bfTh_{t-i})\1(|\bfTh_{-i}| \not = 0)] \,= \, \E[|\bfTh_i|^\alpha \, f(\bfTh_{s}/|\bfTh_i|, \dots, \bfTh_{t}/|\bfTh_i|)].
\end{align}
\subsection{$\ell^p-$cluster processes}\label{sec:p:cluster}
Let $(\bfX_t)$ be a stationary time series  satisfying $\bf RV_\alpha$. For $p \in (0,\infty]$, we say the series admits a $p-$cluster process $\bfQ^{(p)} \in \tilde{\ell}^p$ if there exists a sequence $(x_n)$, satisfying
\beam\label{eq:cp:7}\label{eq:constant:cp:1:tcl}
\P(\|\bfX_{[1,n]}\|_p > x_n) &\sim& n\,c(p)\P(|\bfX_1| > x_n), \quad n \to \infty, 
\eeam 
with $c(p) \in (0,\infty)$,  $n\P(|\bfX_1| > x_n) \to 0$, and
\beam\label{limit:cluster:process}\label{limit:cluster:process:tcl}
    \P(\bfX_{[1,n]}/x_n \, \in \cdot \, | \, \|\bfX_{[1,n]}\|_p > x_n) &\xrightarrow[]{w}& \P(Y\bfQ^{(p)} \in \cdot \, ), \quad n \to  \infty,
\eeam
where $Y$ is independent of $\bfQ^{(p)} \in \tilde{\ell}^p$, $\P(Y > y) = y^{-\alpha},$ for $y > 1$, $\|\bfQ^{(p)}\|_p = 1$ a.s., and the limit in \eqref{limit:cluster:process} holds in $(\tilde{\ell}^p, \tilde{d}_p)$. We study below the anti-clustering and vanishing-small values conditions denoted {\bf AC}, ${\bf CS}_p$, respectively, which guarantee the existence of $\ell^p-$clusters. We rephrase next the Theorem 2.1. of \cite{buritica:mikosch:wintenberger:2021}.
\begin{proposition}\label{prop:existence:cluster:process}
Let $(\bfX_t)$ be a stationary time series  satisfying $\bf RV_\alpha$. Let $(x_n)$ be a sequence such that $n\,\P(|\bfX_1|> x_n) \to 0$, as $n \to \infty$, and $p > 0$. 
For all $\epsilon > 0$, $\delta > 0$,  assume \\[1pt]
    \item[{\bf AC}\,:] $\lim_{s\to \infty} \limsup_{n \to \infty} \P(\|\bfX_{[s,n]}\|_\infty > \epsilon \, x_n \,|\, |\bfX_1| > \epsilon \, x_n \,)=0,$ \\[1pt]
    \item[${\bf CS}_p$:] $ \lim_{\epsilon \downarrow 0}\limsup_{n \to  \infty } \frac{\P( \| \overline{\bfX_{[1,n]}/x_n}^\epsilon \|^p_p   > \delta)}{n \P(|\bfX_1| > x_n)} \;=\; 0.$\\[4pt]
Then, if $p \ge \alpha$, Equation $\eqref{eq:cp:7}$ holds with $c(p) = \E[\|\bfQ^{(\alpha)}\|_p^\alpha]$
 and $c(\infty) \le c(p) \le c(\alpha) = 1$, 
then $(\bfX_t)$ admits a $p-$cluster process $\bfQ^{(p)}$ in the sense of \eqref{limit:cluster:process}. 
If $p < \alpha$, existence of the $p-$cluster process holds if $\E[\|\bfQ^{(\alpha)}\|_p^\alpha] < \infty$.
\end{proposition}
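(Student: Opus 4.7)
The plan is to follow the blocking approach of \cite{basrak:segers:2009} and \cite{basrak:planinic:soulier:2018}, originally developed for $\ell^\infty$-norms, and push it through for a general $\ell^p$-norm by using $\mathbf{CS}_p$ as a bridge between the two regimes. I would first truncate: write $\bfX_t = \underline{\bfX_t}_{\epsilon x_n} + \overline{\bfX}_t^{\epsilon x_n}$. The condition $\mathbf{CS}_p$ guarantees that, once $\epsilon$ is taken small, the small-value part contributes negligibly to $\|\bfX_{[1,n]}/x_n\|_p^p$ at the scale $n\P(|\bfX_1|>x_n)$, so that the $\ell^p$-tail is driven only by those coordinates with $|\bfX_t| > \epsilon x_n$. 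Next I would chop $[1,n]$ into $m_n = n/r_n$ disjoint sub-blocks of length $r_n \to \infty$ satisfying $r_n\P(|\bfX_1|>x_n) \to 0$, and use $\mathbf{AC}$ to show that distinct sub-blocks decouple both in exceedance structure and in $\ell^p$-norm, yielding the approximation $\P(\|\bfX_{[1,n]}\|_p > x_n) \sim m_n \P(\|\bfX_{[1,r_n]}\|_p > x_n)$.

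Once reduced to a single sub-block, I would anchor at the first exceedance time $t^\star := \inf\{t\le r_n : |\bfX_t| > \epsilon x_n\}$. Summing over $t^\star \in \{1,\dots,r_n\}$, using stationarity and the regular variation assumption $\mathbf{RV}_\alpha$, and letting first $n \to \infty$ and then $r_n \to \infty$, the block tail probability divided by $r_n\P(|\bfX_1|>x_n)$ converges to an expression involving the spectral tail process $(\bfTh_t)$. Identification of $c(p)$ then proceeds as follows: for $p=\alpha$ the time-change formula \eqref{eq:time:change:formula:1} telescopes the sum over anchor times and directly gives $c(\alpha)=1$; for $p \ge \alpha$, the monotonicity $\|\cdot\|_p \le \|\cdot\|_\alpha$ on sequences yields $c(p)\le 1$, and likewise $c(\infty)\le c(p)$; for $p < \alpha$ one has the reverse inequality $\|\cdot\|_p \ge \|\cdot\|_\alpha$, so the limiting expression is finite only under the additional assumption $\E[\|\bfQ^{(\alpha)}\|_p^\alpha]<\infty$, and re-expressing the limit through the change-of-norms identity between $\alpha$- and $p$-clusters yields $c(p)=\E[\|\bfQ^{(\alpha)}\|_p^\alpha]$.

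The conditional weak convergence \eqref{limit:cluster:process} then follows by repeating the same blocking steps with a test function $f \in \mathcal{G}_{+}(\tilde{\ell}^p)$ applied to the anchored rescaled block, viewed in the shift-invariant space $(\tilde{\ell}^p,\tilde{d}_p)$; after renormalizing by $c(p)$, one identifies the limit as $\P(Y\bfQ^{(p)} \in \cdot)$ with $Y$ Pareto-$\alpha$ independent of $\bfQ^{(p)}$ and $\|\bfQ^{(p)}\|_p=1$ almost surely. The main obstacle I anticipate is that $\mathbf{AC}$ is phrased in the $\ell^\infty$-norm whereas the target functional is $\ell^p$: the $\ell^p$-norm does not decouple exactly across sub-blocks the way the maximum does, so one must combine $\mathbf{CS}_p$ (to neutralize small-value cross-terms) with a careful use of the triangle inequality, or, for $p<1$, its $p$-subadditive analogue. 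A secondary technical step, essential in the case $p<\alpha$, is a uniform-integrability argument to pass to the limit in the unbounded functional $\|\cdot\|_p^\alpha$ under the Pareto-cluster representation.
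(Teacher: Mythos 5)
The paper itself does not prove this proposition: it is an explicit restatement of Theorem~2.1 of \cite{buritica:mikosch:wintenberger:2021}, and all proof content lives there. Your sketch faithfully reconstructs the strategy of that reference: $\mathbf{CS}_p$-truncation to remove small coordinates, $\mathbf{AC}$-driven blocking, anchoring at the first exceedance, the time-change formula \eqref{eq:time:change:formula:1} to telescope the anchor sum and identify $c(\alpha)=1$, monotonicity of the $\ell^p$-scale for the inequalities $c(\infty)\le c(p)\le 1$ when $p\ge\alpha$, and the change-of-norms identity \eqref{eq:change:of:norms} to produce $\bfQ^{(p)}$ from $\bfQ^{(\alpha)}$ when $p<\alpha$.

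Two steps deserve more weight than your outline gives them, because this is where the argument genuinely differs from the classical $\ell^\infty$ case. First, the reduction $\P(\|\bfX_{[1,n]}\|_p>x_n)\sim m_n\P(\|\bfX_{[1,r_n]}\|_p>x_n)$ is not a direct $\mathbf{AC}$-decoupling: the $\ell^p$-norm aggregates across sub-blocks, so one must show both (a) that the event where two or more sub-blocks contain an exceedance of $\epsilon x_n$ is $o(n\P(|\bfX_1|>x_n))$ (this uses $\mathbf{AC}$ jointly with $n\P(|\bfX_1|>x_n)\to 0$), and (b) that accumulating sub-threshold coordinates from the other $m_n-1$ sub-blocks cannot by themselves push the $\ell^p$-norm over $x_n$; this is exactly what $\mathbf{CS}_p$ is for, applied after truncation, combined with the $p$-subadditivity of $x\mapsto x^p$ for $p\le 1$ or convexity/triangle inequality for $p\ge 1$. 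Second, in the case $p<\alpha$ the passage from the anchored spectral-tail-process limit to the finite constant $c(p)=\E[\|\bfQ^{(\alpha)}\|_p^\alpha]$ requires a uniform-integrability argument (in $\epsilon$ and in the truncation level) that is not automatic; you flag it but do not supply it, and it is precisely here that the hypothesis $\E[\|\bfQ^{(\alpha)}\|_p^\alpha]<\infty$ enters. With those two steps supplied, your reconstruction agrees with the cited proof.
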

We see from Proposition~\ref{prop:existence:cluster:process} assuming {\bf AC} and ${\bf CS}_\alpha$ implies the time series $(\bfX_t)$ admits an $\a-$cluster $\bfQ^{(\alpha)}$, where $\alpha > 0$, denotes the tail index. In this case, appealing to Proposition 3.1. in \cite{buritica:mikosch:wintenberger:2021}, we have
\beam\label{eq:alphacluster}  
           \bfQ \quad := \quad \bfQ^{(\alpha)} &\eqd& \bfTh/\|\bfTh\|_\alpha, \quad \in \tilde{\ell}^\alpha\,,
\eeam 
where $(\bfTh_t)$ is the spectral tail process from Equation~\eqref{eq:rv}.
Moreover, if ${\bf CS}_p$, ${\bf CS}_{p^\prime}$, and $ \E[\|\bfQ\|_{p}^\alpha] + \E[\|\bfQ\|_{p^\prime}^\alpha] <  \infty $ also hold, then the $p, p^\prime-$clusters exist and are related by the change-of-norms formula below 
\beam\label{eq:change:of:norms}
 \P(\bfQ^{(p)}\in\cdot)
&=& c(p)^{-1}\E[\|\bfQ\|_{p}^\alpha\1(\bfQ/\|\bfQ\|_{p}\in\cdot )]\\
&=& \dfrac{c(p')}{c(p)}\E[\|\bfQ^{(p')}\|_{p}^\alpha\1(\bfQ^{(p')}/\|\bfQ^{(p')}\|_{p}\in\cdot )].\label{eq:change:of:norms:2}
\eeam
Since $\|\bfQ^{(p)}\|_p=1$ a.s. for any $p\in(0,\infty]$, then $c(\alpha)=1$, and $\E[\|\bfQ^{(p')}\|_{p}^\alpha]=c(p)/c(p')$, where $c(p), c(p^\prime)$, are as in Equation~\eqref{eq:cp:7}.
In the following we denote by $\bfQ$ the $\a-$cluster as in  \eqref{eq:alphacluster}.

\begin{remark}\label{remark:cs:m0}
Using the {monotonicity} of norms, we see ${\bf CS}_p$ implies ${\bf CS}_{p^\prime}$, for $p^\prime>p>0$. 
If $p > \alpha$,  condition ${\bf CS}_p$ is always satisfied for sequences $(x_n)$ such that $n\P(|\bfX_1| > x_n) \to 0$, as $n \to \infty$. 
If $\alpha/2 < p \le \alpha$,  
then for all $\kappa > 0$,
${\bf CS}_p$ holds for short-range dependence models and sequences $(x_n)$ 
satisfying $n/x_n^{p\land (\alpha - \kappa')} \to 0$, as $n \to \infty$ (see remarks 5.1. and 5.2. in \cite{buritica:mikosch:wintenberger:2021}).
We verify this condition on classical models in Section~\ref{sec:examples}.
\end{remark}

\section{Asymptotics of $p-$cluster block estimators}\label{sec:main:result}

\subsection{Block estimators}
{
Let $(\bfX_t)$ be an $\mathbb{R}^d-$valued stationary time series satisfying $\bf RV_\alpha$.
For $p > 0$ fixed, assume the conditions of Proposition~\ref{prop:existence:cluster:process} hold for $p$, thus the series admits a $p-$cluster process $\bfQ^{(p)} \in \tilde{\ell}^p$, and \eqref{limit:cluster:process:tcl} holds for a sequence of high levels $(x_n)$ satisfying $\P(\|\bfX_{[1,n]}\|_p > x_n) \to 0$, as $n \to \infty$. 
Recall the $\ell^p-$block estimator in \eqref{eq:estimator:cluster} is tuned with the block lengths $(b_n)$, and the number $(k_n)$ of extremal blocks. 
The total number of disjoint blocks in a sample is denoted $(m_n)$ with $m_n = \lfloor n/b_n \rfloor $. 
To study the asymptotics of block estimators we assume the following relation between $(k_n)$, $(x_n)$ and $(b_n)$: 
\beam \label{eq:asymp:k}\label{eq:k:intro}
k \quad:=\quad  k_n(p)&=& \;\big\lfloor m_n\P(\|\mathcal B_1\|_p >  x_{b_n})\big\rfloor,
\eeam
holds.  
Similarly  we consider a sequence $(x'_n)$ satisfying \eqref{limit:cluster:process:tcl} for $p = \infty$, and we assume the sequence $(k^\prime_n)$ satisfies the following relation 
\beam
k^\prime \; := \; k_n^\prime &=& \lfloor  n\, \P(|\bfX_1| > x^\prime_{b_n})\rfloor , \quad n \to \infty. \label{eq:k:5}\label{eq:k:4}
\eeam
In what follows, the sequences $(x_n)$, $(b_n)$, $(m_n)$, $(k_n)$, $(x_n^\prime),$  $(k_n^\prime)$ that appear in this article satisfy conditions \eqref{eq:asymp:k}, \eqref{eq:k:5} above.

\subsection{Tail-index estimation}{ 
To estimate the tail-index $\alpha$ of the {regularly varying series,
we consider the Hill estimator:
\beam\label{eq:hill:estimator}  
\frac{1}{\widehat{\alpha}^n} \;  := \;  \frac{1}{\widehat{\alpha}^n(k_n^\prime)}  &:=& \frac{1}{k_n^\prime} \, \sum_{t=1}^{n} \log^{+}(|\bfX_t|/|\bfX|_{(k_n^\prime+1)}),
\eeam 
where $|\bfX|_{(1)} \ge |\bfX|_{(2)} \ge \cdots \ge  |\bfX|_{(n)}$,
and $k^\prime = k^\prime_n$ is a tuning sequence for \eqref{eq:hill:estimator}.
To study the asymptotic properties of the Hill estimator
we write it as a cluster statistic.  
Consider the functional $h : \tilde{\ell}^{\infty} \to [0,\infty)$ given by

\beam \label{eq:func:hill}
h(\bfx) &=& \sum_{t \in \mathbb{Z}}\log(|\bfx_t|)\1(|\bfx_t| > 1).
\eeam 
It is easy to see $h(\bfx) =  h(\bfx)\1(\|\bfx\|_\alpha > 1)$ and
\beao 
{h^{\bfQ}(\alpha) }
&=& \int_{0}^\infty
\sum_{t \in \mathbb{Z}}  \E\big[
	\log(y|\bfQ_t|) 
	\1( y|\bfQ_t| > 1 )  \big] d(-y^{-\a}) \; = \; \frac{1}{\alpha}.
\eeao 
We also introduce the counts of exceedances functional $e: \tilde{\ell}^{\infty} \to [0,\infty)$ given by
\beam \label{eq:func:exceedances}
e(\bfx) &:=& \sum_{j\in\mathbb{Z}} \1(|\bfx_t| > 1 ),
\eeam 
which also satisfies
$e(\bfx) = e(\bfx)\1(\|\bfx\|_\alpha > 1)$ and 
\beao   
{ e^{\bfQ}(\alpha) }
&=& \int_{0}^\infty
\sum_{t \in \mathbb{Z}} 
\E\big[ 
	\1( y|\bfQ_t| > 1 )  \big]d(-y^{-\a})  \; = \; 1.
\eeao

\par
{


\par 
{
\subsection{Consistency of block estimators}\label{subsec:consis:intro}

Let $f: \tilde{\ell}^p \to \mathbb{R}$ be a functional defining the $p-$cluster statistic in \eqref{eq:statistic}.
To stress its relation with $p$ it will be convenient to write $f = f(p)$.}
In addition, we define the functional $1(p)$ as }
\beam \label{eq:1:functional}
1(p)(\bfx) &:=& \1(\|\bfx\|_p > 1),
\eeam 
which satisfies $1^\bfQ(p) =  \P(Y \|\bfQ^{(p)}\|_p > 1) = 1$. 
Recall also the functionals $h(\a), e(\a) $ in \eqref{eq:func:hill} and \eqref{eq:func:exceedances} defining the Hill and exceedances estimators, respectively. 
\par 
In numerous examples, the functional $f(p)$ might also depend on $\a$ in its expression, meaning $f(p) = f_\a(p)$.
Hence, it will  also be useful to consider the family of functions $f_q(p): \tilde{\ell}^p \to \mathbb{R}$, indexed by $q$, for $q$ in a neighborhood of $\a$. 
\par 
We start by stating the consistency of 
$\ell^p-$block estimators
$\widehat{f_{\widehat \alpha}^{\bfQ}}(p)$
in \eqref{eq:estimator:cluster} where we replace 
all appearances of $\alpha$ by $\widehat \alpha$ in the function $f_\alpha(p)$. 
This is the purpose of Lemma~\ref{lem:con} for which we require the assumptions below. \\[2mm] 

\par 
\noindent 
   {\bf C}: 
   {  Let $f = f_\a(p) \in \mathcal{G}_+(\tilde{\ell}^p)$. 
    Assume there exist $\epsilon, \delta >0$ such that 
\beam\label{eq:C}\\
\nonumber
\limsup_{n \to \infty}
\frac{
\E\Big[
\sup_{  \tiny
\substack{ q \in [\alpha-\epsilon,\alpha + \epsilon]\\ 
u \in [1-\epsilon,1 + \epsilon]
}
}
\big(
f_{q} (\bfX_{[1,n]}/(ux_{n}) )\big)^{1+\delta}\1(\|\bfX_{[1,n]}\|_{p} > ux_{n}) \Big]
}
{\P(\|\bfX_{[1,n]}\|_p > x_{n})}< \infty,
\eeam 
and $\sup_{ \tiny
\substack{ q \in [\alpha-\epsilon, \alpha + \epsilon]\\
u \in [1-\epsilon, 1+\epsilon],
}
} \E[f_{q}(Y\bfQ/u) ] < \infty$. 
\par 
{
If $p = \alpha$, assume for every $q \in [\a - \epsilon, \a+\epsilon]$ that $ c(q) = \E[\|\bfQ\|_q^\a] < \infty$ 
and Equation~\eqref{eq:C} holds with  
$p$ replaced by $q$ in the numerator. }
\\[2mm] 
  {\bf S}:
   Let $f= f_\alpha(p) \in \mathcal{G}_+(\tilde{\ell}^p)$. Assume there exists $\epsilon > 0$ such that,  for all $q \in [\a - \epsilon, \a + \epsilon]$, for all $\bfx \in \tilde{\ell}^p \setminus\{0\}$, $f_q$ admits the Taylor development:
    \beam 
    f_q(\bfx) &=& f_\a(\bfx) + (q-\alpha) \tfrac{\partial f_q}{\partial q}|_{q=\a}(\bfx)+ {\frac{1}{2}}(q-\a)^2 \tfrac{\partial^2 f_q}{\partial q^2}|_{q=\xi}(\bfx),
    \eeam 
    where $\xi = \xi(\bfx)$ is a real value in $[\alpha-q,\alpha+q]$, and here $\partial f_q/\partial q$ and $\partial^2 f_q/\partial q^2$ denote the first and second-order derivatives of the function $q \mapsto f_q$. 
    }\\[2mm]
   
{
In the following, if we assume that the consistency and smoothness assumptions: ${\bf C,S}$ hold, then we let $\epsilon,\delta > 0$ be such that both statements stay true.}
The proof of the next Lemma is deferred to Appendix~\ref{sec:consistency}.

\begin{lemma}\label{lem:con}
 { 
Let $(\bfX_t)$ be a  stationary time series satisfying $\bf RV_\alpha$.
Assume the conditions of Proposition~\ref{prop:existence:cluster:process} hold and the series admits a $p-$cluster process $\bfQ^{(p)} \in \tilde{\ell}^p$.
Consider the mixing-coefficients $(\beta_t)$ and assume there exists a sequence $(\ell_n)$ 
satisfying $\ell_n \to  \infty$, as $n \to  \infty$,  and
    \beao
        \lim_{n \to  \infty } m_n \beta_{\ell_n}/k_n & = & 
        \lim_{n \to  \infty} \ell_n/b_n \quad = \quad   0\,.
    \eeao 
Let $f = f_\a(p) :\tilde \ell^{p} \to \mathbb{R}$
and assume ${
\bf S}$ holds. 
Assume $f_\a(p)$ and $
\partial f_q/\partial q|_{q = 
\alpha}(p)$ satisfy {\bf C}, 
for fixed $\epsilon,\delta > 0$, and assume $\widehat \alpha_n \xrightarrow[]{\P} \alpha$, 
as $n \to \infty$.  Then, 
\beao 
\widehat{f^\bfQ_{\widehat \a}}(p)  &\xrightarrow[]{\P}& f_{
\alpha
}^{\bfQ}(p), \qquad n \to  \infty.
\eeao 
Furthermore, if $f_
\alpha(\alpha), 1(\a)$, and $
\partial f_q/\partial q|_{q = 
\alpha}(\alpha)$ all satisfy ${
\bf C}$ for $p = 
\alpha$, then the relation $\widehat{f_{\widehat \a}^\bfQ}(\widehat \a) \xrightarrow[]{\P} f_\a^{\bfQ}(\alpha)$ holds as well as $n \to \infty$.
}
\end{lemma}
}}}

\subsection{Assumptions for asymptotic normality}\label{sec:assumption}

Our main result is presented in Theorem~\ref{thm:main} 
stating the asymptotic normality of the $\ell^p-$block estimators in \eqref{eq:estimator:cluster} under the Lindeberg, bias, moment and mixing assumptions below. 
\\[2mm]

\par 

\noindent 
     {\bf L}: Let $f= f_\a(p) \in \mathcal{G}_+(\tilde{\ell}^p)$ such that $u \mapsto f((\bfx_t)/u)$ is non-increasing, and there exists $\delta > 0$ such that, for all $u > 0$,  the following Lindeberg-type condition holds
    \beam\label{eq:cond:lind:1} 
        \limsup_{n \to \infty}\frac{\E[\big(f(\bfX_{[1,n]}/(u\,x_{n}))\big)^{2+\delta}  \1(\|\bfX_{[1,n]}/x_{n} \|_p > u) ]}{\P(\|\bfX_{[1,n]}\|_p >x_n)} &<& \infty.
    \eeam
    {Assume  $\E[\big(f(Y \bfQ^{(p)})\big)^{2+\delta}] < \infty$.
    \par 
    If $p = \alpha$ assume there exists $\epsilon > 0$ such that
    { \eqref{eq:cond:lind:1} holds replacing $p$ by $\alpha-\epsilon$ and}
    $\sup_{ \tiny q \in [\a - \epsilon, \a + \epsilon]} \E[\big(f(Y \bfQ^{(q)})\big)^{2+\delta}] < \infty$.} \\[2mm]
    {${\bf B}(k_n):$} Consider $f$ satisfying $\bf L$, and assume {there exists $\epsilon > 0$} such that the bias condition:
\begin{align}\label{eq:bias2}
   \lim_{n \to \infty} \sqrt{k_n} 
   \sup_{
   \substack{ u \in [1-\epsilon,1+\epsilon]} } \big|\, \frac{ \E[f(        \mathcal{B}_1/(u\, x_{b_n}) )\1(\|\mathcal{B}_1/x_{b_n} \|_p > u) ]}{
   \P(\|\mathcal{B}_1\|_p > x_{b_n})
   } 
   &-  u^{-\a}\, f^{\bfQ}(p)\, \big|
    \nonumber \\
    & \quad =\quad  0,
\end{align}
    holds where $f^\bfQ(p)$  is as in \eqref{eq:statistic}. \\[2mm]
    {
{{\bf B}$_\alpha(k_n):$} Consider $f$ satisfying $\bf L$, and assume {there exists $\epsilon > 0$} such that
\begin{align}
\label{eq:bias1}
   \lim_{n \to \infty}  \sqrt{k_n} 
   \sup_{
   \substack{ u \in [1-\epsilon,1+\epsilon], \\
   q \in [\alpha - \epsilon,  \alpha + \epsilon]} } \big|\, \frac{ \E[f(        \mathcal{B}_1/(u\, x_{b_n}) )\1(\|\mathcal{B}_1/x_{b_n}\|_q > u) ]}{
   \P(\|\mathcal{B}_1\|_q > x_{b_n})
   } &- u^{-\a}\, f^{\bfQ}(q)\, \big|\, \nonumber \\
    & \quad = \quad 0.
\end{align}
     {\bf M}: Assume {there exists $\epsilon > 0$} such that the moment condition below holds
    \beao 
\E\big[ \|\bfQ\|_{\a-\vep}^{2\a}\big]<\infty\,.
\eeao
    ${\bf MX_\beta}$: Consider $f$ satisfying $\bf L$, and let $\delta$ be such that \eqref{eq:cond:lind:1} hold. Assume that the mixing coefficients $(\beta_t)$ satisfy  for some sequence $(\ell_n)$ satisfying $\ell_n \to  \infty$, and $\ell_n/b_n \to 0$, $ m_n \beta_{\ell_n}/k_n \to 0$, as $n \to \infty$, 
    and
    \beam\label{eq:covariance:beta:mixing}
    \lim_{n \to  \infty}  \sum_{t=2}^{m_n} (m_n\beta_{ tb_n}/k_n)^{\tfrac{\delta}{2+\delta}} \; = \; 0.
    \eeam 
     {If $f$ is bounded, note it is enough to assume $\sum_{t=1}^{m_n} m_n\beta_{ tb_n}/k_n  \to 0$, $n \to \infty$. }\\[2mm]
\par 
   In the remainder of the article, if we assume that certain of the consistency and smoothness assumptions: ${\bf C, S}$ hold or if the Lindeberg, bias, moment, and mixing assumptions: ${\bf L, B, M, MX}_\beta$ hold, then we let $\epsilon, \delta > 0$ be such that all previous statements stay true.
 }

 \subsection{Asymptotic normality of block estimators}\label{sec:main:res}
   
We state next
the asymptotic normality of the $\ell^p-$block estimators in \eqref{eq:estimator:cluster:1}.
We extend the result to $\ell^{\widehat \a}-$block estimators where $1/\widehat \alpha$ is the Hill estimator.
We also cover the implementation of block estimators for functionals $f_\a(p)$ where we plug-in $\widehat \alpha$ at the place of $\alpha$.
  We defer the proof of Theorem~\ref{thm:main} to Section~\ref{sec:asymptotic:normality}.


{
\begin{theorem}\label{thm:main}

Let $(\bfX_t)$ be a  stationary time series satisfying $\bf RV_\alpha$. 
Assume the conditions of Proposition~\ref{prop:existence:cluster:process} hold and the series admits a $p-$cluster process $\bfQ^{(p)} \in \tilde{\ell}^p$. 
Consider $f=f(p) :\tilde{\ell}^p \to \mathbb{R}$ satisfying {\bf L},
and if $f(p) =f_\alpha(p)$ assume  {\bf M} and {\bf S} hold, and ${f^\prime_\alpha = \partial f_q/\partial q|_{q=\alpha} }$, and ${f^{\prime \prime}_\alpha = \partial^2 f_q/\partial q^2|_{q=\alpha}}$ satisfy ${\bf C}$, for fixed values $\epsilon, \delta > 0$.
Assume
$f(p)$ and $1(p)$ satisfy $\bfB(k_n)$, and $h, e$, in 
\eqref{eq:func:hill}, \eqref{eq:func:exceedances} satisfy  {\bf L} and $\bfB(k_n^\prime)$.
Moreover, assume ${\bf MX_\beta}$ holds and 
 $m_n \beta_{b_n} \to 0$, as $ n \to \infty$.\\
Then, if $k_n/k_n^\prime \to \kappa $, with $\kappa \ge 0$, we have
\beam\label{eq:central:limit:1} \label{eq:central:limit} 
&&\sqrt{k_n}\,
\Big( \widehat{f_{\widehat \a }^{\bfQ}}(p)  - f^\bfQ_{\a}(p) \Big)
\xrightarrow[]{d} 
\mathcal{N}\big(0, \sigma^2(\kappa)\big), \qquad n \to \infty\,,\eeam 
where the expression of $\sigma^2(\kappa)\ge 0$ is given in \eqref{eq:sigma:2} and  $\sigma^2(0)= \var(f_\alpha(Y\bfQ^{(p)}))$.
Furthermore,
if $p= \a$, assume ${\bf M}$ holds and {\bf L} holds, and assume $f(\alpha)$ and $1(\alpha)$ satisfy $\bfB_\alpha(k_n)$.  Then, \eqref{eq:central:limit} continues to hold replacing $p$ by $\widehat \a$ in the block estimator. 
\end{theorem}

}


In classical examples (cf. Section~\ref{sec:examples}), for all $\kappa' > 0$, a sequence 
$x_{b_n} \sim  b_n^{\kappa' +1/(p \land \alpha)} $, typically satisfies conditions $\bf AC$ and {\bf CS}$_p$, as $n \to \infty$.
This is a consequence of Remark~\ref{remark:cs:m0}.  
In this case, for any $\kappa'' >0$, the sequence
\beam\label{eq:k:mo}
k_n \blue &\sim&    n\, b_n^{-\kappa'' -{\alpha}/{(p\land \alpha) }},
\eeam
 satisfies the relation in Equation \eqref{eq:k:intro}. Here we use an application of Potter's bound together with Equation \eqref{eq:constant:cp:1:tcl}. 
We study this choice of sequence $(k_n)$ in more detail in Section~\ref{sec:examples}. 
\par 

The choice of $k_n, k_n^\prime$ are also subject to the bias conditions ${\bf B}(k_n)$ and $\bfB(k^\prime_n)$. 
Actually, it is common practice to choose $k_n^\prime$ larger than $k_n$, and the numerical results from Section~\ref{sec:numerical:experiments} support this practice. 
When we use fewer blocks $k_n$ for $p-$cluster inference, compared to the number of records $k_n^\prime$ we use to tune the Hill estimator, precisely if $k_n/k_n^\prime \to 0$, as $n \to \infty$, the variance term simplifies to 
\beao
\sqrt{k_n}\Big( \widehat{f_{\widehat \alpha}^{\bfQ}}(p)  - f_\a^\bfQ(p) \Big) &\xrightarrow[]{d}&  \mathcal{N}\big(0, \Var\big(f_\alpha(Y\bfQ^{(p)}) \big) \, \big)\,, \quad n \to \infty.
\eeao
This expression also holds when the functional $f$ doesn't include $\a$ in its expression. 
\par

\begin{remark}\label{remark:bias}
To plug-in $\widehat \a$ in the place of $\a$ for $p-$cluster inference we require the bias condition ${\bf B}(k_n^\prime)$ for $h, e$.  To do so, the Hill estimator is seen as a block estimator as in  \eqref{eq:estimator:cluster} that evaluates the block functional $h$ on $(\mathcal{B}_t/x^\prime_{b_n})$, $t=1,\dots, m_n$, replacing the high threshold level $(x^\prime_{b_n})$ by $k^\prime-$order statistic from the sample $(|\bfX_t|)$ where
\beao
k^\prime_n &\sim& n\,\P(|\bfX_1| > x_{b_n}^\prime ), \quad n \to \infty. 
\eeao
Then the bias condition  ${\bf B}(k_n^\prime)$ can be rewritten as
\beao 
\lim_{n \to \infty} \sqrt{k^\prime_n} \sup_{u \in [1-\epsilon, 1+ \epsilon]} \Big| \frac{ \E[\log(|\bfX_1|/(u\,x^\prime_{b_n}))\1(|\bfX_1| > (u\, x^\prime_{b_n}))]}{\P(|\bfX_1| > x^\prime_{b_n})} - u^{-\a} \frac{1}{\a}  \Big| &=& 0,
\eeao 
which is no longer a condition on blocks, but on $|\bfX_1|$.
This type of condition was considered in   \cite{drees:janssen:neblung:2021,kulik:cissokho:2022}. Notice that the dependence of the threshold $x^\prime_{b_n}$ with $b_n$ is an artifact of our notation for ${\bf B}$.

\end{remark}


{
\begin{remark}
In Section~\ref{sec:examples} we promote choosing $(k_n)$ as in \eqref{eq:k:mo}, which implies, for $\kappa{''}>0$,
\beao 
m_n/k_n  &\sim&   b_n^{- 1  + \frac{\alpha}{p \land \alpha}+ \kappa'' }, \quad n \to \infty.
\eeao 
In this way, the mixing condition ${\bf MX}_\beta$ depends solely on the mixing coefficients of the series $(\beta_t)$ and its tail index $\alpha$, and not on the choice of block lengths $(b_n)$.  
We can thus interpret condition $m_n \beta_{b_n} \to 0$, as $n\to \infty$, in Theorem~\ref{thm:main} as a restriction
 on the sequence of block lengths $(b_n)$. 
\end{remark}
}

\section{Cluster statistics}\label{sec:examples:cluster}

In view of Theorem~\ref{thm:main}, we derive asymptotic normality of the classical cluster index estimators in extreme value theory. 

\subsection{The extremal index} \label{example:extremal:index}
Let $(\bfX_t)$ be a stationary time series in $(\mathbb{R}^d,|\cdot|)$ satisfying $\bf RV_\alpha$. The extremal index $\theta_{|\bfX|}$ of the series $(|\bfX_t|)$ is a measure of serial clustering introduced in \cite{leadbetter:1983} and \cite{leadbetter:lindgren:rootzen:1983}. We recall the extremal index estimator proposed in \cite{buritica:meyer:mikosch:wintenberger:2021}, based on extremal $\ell^\alpha-$blocks.
\par 
\begin{corollary}\label{cor:ei}
Consider $f_\a:\tilde{\ell}^\a  \to \mathbb{R}$ to be the function $\bfx \mapsto \|\bfx \|_\infty^\a/\|\bfx \|_\a^\a $. Assume the conditions of 
Theorem~\ref{thm:main} hold for $p=\alpha$, and $k/k^\prime \to 0$, as $n \to \infty$.   Let $\theta_{|\bfX|} = \E[\|\bf\bfQ\|_\infty^\alpha]$, hence we deduce an estimator 
\beam\label{eq:estimator:Ei}
\widehat \theta_{|\bfX|} &:=& \dfrac{1}{k_n}   \sum_{t=1}^{m_n} \dfrac{\|\mathcal{B}_t\|^{\widehat \alpha}_\infty}{\|\mathcal{B}_t\|^{\widehat \alpha}_{\widehat \a}}\1(\|\mathcal{B}_t\|_{\widehat \a} > \|\mathcal{B}\|_{\widehat \a,(k_n+1)}),
\eeam 
such that
\beao 
\sqrt{k_n}\big(\, \widehat \theta_{|\bfX|} - \theta_{|\bfX|} \, \big) &\xrightarrow[]{d}&  \, \mathcal{N}(0, \Var(\|\bfQ\|_\infty^{\a})), \quad n \to \infty\,.
\eeao 
\end{corollary}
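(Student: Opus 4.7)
The strategy is to invoke Theorem~\ref{thm:main} directly at $p=\a$ with the $0$-homogeneous functional $f_\a(\bfx) = \|\bfx\|_\infty^\a/\|\bfx\|_\a^\a$. Three items need to be checked: that $f_\a$ fits into the framework of the theorem, that the generic estimator $\widehat{f_\a^{\bfQ}}$ and target $f_\a^{\bfQ}$ specialize to $\widehat\theta_{|\bfX|}$ and $\theta_{|\bfX|}$, and that the limiting variance reduces to $\Var(\|\bfQ^{(\a)}\|_\infty^\a)$.

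First I would formally place $f_\a$ in $\mathcal{G}_+(\tilde{\ell}^\a)$ by absorbing the threshold indicator $\1(\|\bfx\|_\a>1)$ into its definition, so that $f_\a$ vanishes on the unit $\ell^\a$-ball and coincides with the ratio on its complement. With this convention $f_\a$ is bounded by $1$, shift-invariant, continuous off the origin, and $u \mapsto f_\a(\bfx/u)$ is non-increasing---locally constant on $(0,\|\bfx\|_\a)$ and then identically $0$ on $[\|\bfx\|_\a,\infty)$. The $0$-homogeneity of the ratio cancels the normalization by $\|\mathcal{B}\|_{\a,(k+1)}$ in \eqref{eq:estimator:cluster:1}, so that $\widehat{f_\a^{\bfQ}}$ coincides termwise with the right-hand side of \eqref{eq:estimator:Ei}. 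For the target, since $\|\bfQ^{(\a)}\|_\a = 1$ a.s.\ and $\P(Y>1)=1$, the indicator is satisfied a.s.\ and
\[
f_\a^{\bfQ} \;=\; \E\big[\|\bfQ^{(\a)}\|_\infty^\a/\|\bfQ^{(\a)}\|_\a^\a\big] \;=\; \E[\|\bfQ^{(\a)}\|_\infty^\a] \;=\; \theta_{|\bfX|}
\]
by the definition adopted here.

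The final step is to verify condition $\bf L$; the bias condition $\bf B$, the mixing condition $\bf MX$, and at least one of ${\bf MX}_\beta$ and ${\bf MX}_\rho$ are supplied by hypothesis. Boundedness of $f_\a$ together with the embedded indicator collapses the Lindeberg-type bound \eqref{eq:cond:lind:1} into $(m_n/k_n)\P(\|\mathcal{B}_1\|_\a > u\,x_{b_n}) = O(1)$, which follows from the choice of $k_n$ in \eqref{eq:k:intro} combined with \eqref{eq:cp:7} and \eqref{eq:bias1}; the second-moment bound $\E[f_\a(Y\bfQ^{(\a)})^2] \le 1$ is immediate. Theorem~\ref{thm:main} then yields the stated Gaussian limit with asymptotic variance $\Var(f_\a(Y\bfQ^{(\a)})) = \Var(\|\bfQ^{(\a)}\|_\infty^\a)$. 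The only slightly delicate point is the formal absorption of the indicator $\1(\|\cdot\|_\a>1)$ into $f_\a$ to land in $\mathcal{G}_+(\tilde{\ell}^\a)$; once this convention is fixed, the argument is essentially bookkeeping of the cancellations induced by $0$-homogeneity and $\|\bfQ^{(\a)}\|_\a=1$.
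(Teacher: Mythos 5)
Your proof is correct and follows the same route as the paper's (a direct invocation of Theorem~\ref{thm:main} with $p=\a$, noting $f_\a$ is bounded, shift-invariant, and $0$-homogeneous, so the estimator specializes to \eqref{eq:estimator:Ei} and the target to $\theta_{|\bfX|}$). You give more care than the paper's one-line proof to the formal step of placing $f_\a$ in $\mathcal{G}_+(\tilde\ell^\a)$ by absorbing the threshold indicator, which is a sensible way to make the statement airtight; the paper simply asserts $f_p\in\mathcal{G}_+(\tilde\ell^p)$ is bounded continuous and satisfies $\bf L$.
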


\begin{proof} 
The proof of Corollary~\ref{cor:ei} follows directly from an application of Theorem \ref{thm:main} to
 the function $\tilde f_\a(\bfx) = f_\alpha(\bfx)\1(\|\bfx\|_{\a} > 1)$ satisfying
$\tilde f_\a  \in \mathcal{G}_+(\tilde{\ell}^{\a})$ and $\tilde f_\alpha$ is a bounded a.s. continuous function satisfying $\bf L$. 
\end{proof}
\par 
For comparison, we also review the block estimator based on extremal $\ell^\infty-$blocks proposed in \cite{hsing:1991}: 
\beam \label{eq:blocks:ei}
\widehat{\theta}_{|\bfX|}^B \;:=\; \dfrac{1}{k_n}   \sum_{t=1}^{m_n} \1( \|\mathcal{B}_t\|_\infty > |\bfX|_{(k_n+1)})\,. 
\eeam 
Direct computations from Example 10.4.2 in \cite{kulik:soulier:2020} yield
\beao  \sqrt{k_n}( \widehat{\theta}_{|\bfX|}^B - \theta_{|\bfX|}) &\xrightarrow[]{d}& \mathcal{N}(0,\sigma^2_\theta ), \quad n \to \infty,
\eeao 
where $\sigma^2_\theta \in [0,\infty)$, and
\beam\label{eq:sigma} 
\sigma^2_\theta &:=& \theta^2_{|\bfX|} \sum_{j\in\mathbb{Z}}\E[|\bfTh_j|^\a \land 1] - \theta_{|\bfX|} \nonumber \\
&=& \theta^2_{|\bfX|} \sum_{j\in  \mathbb{Z}}\sum_{t \in \mathbb{Z}}   \E[|\bfQ_{j+t}|^\alpha \land |\bfQ_{t}|^\alpha ] - \theta_{|\bfX|}.
\eeam 
The last equality follows appealing to the time-change formula in \eqref{eq:time:change:formula:1} and Equation \eqref{eq:alphacluster}. As a result, we can compare the asymptotic variances of $\widehat \theta_{|\bfX|}$ and $\widehat \theta^B_{|\bfX|}$ in the cases where $\bfQ$ is known. This is the topic of Section~\ref{sec:examples}.

\begin{remark}
An alternative $\a-$cluster estimator of the extremal index corresponds
 to the block functional $\tilde f(\bfx) = \1(\|\bfx\|_\infty > 1)$.
A similar asymptotic normality 
result applies but with an asymptotic variance $\Var ( \tilde f ( Y \bfQ) )$ larger 
than $\Var(f_\a( Y \bfQ) )$. It motivates the use of $\hat \theta_{|\bfX|}$ 
although it requires the estimation of $\a$. 
The latter is harmless using Hill's estimator choosing $k'$ sufficiently large with respect to $k$, 
which is often the case in practice.
\end{remark}
\begin{remark}\label{remark:check:S:ei}
To check condition $\bf S$ for the functional $f_\alpha(\bfx) = \|\bfx\|_\infty^\alpha/\|\bfx\|_\a^\a$, note the Taylor expansions:
\beao 
\lefteqn{ \|\bfx\|_q^q - \|\bfx\|_\alpha^\alpha }\\
&= &  (q - \a) \sum_{t \in \mathbb{Z}}|\bfx_t|^\alpha \log(|\bfx_t|) + \frac{1}{2} (q-\a)^2 \sum_{t \in \mathbb{Z}} |\bfx_t|^{q^\prime} \log^2(|\bfx_t|),\\
\lefteqn{\|\bfx\|_\infty^q - \|\bfx\|_\infty^\alpha }\\
&= & (q- \a) \, \|\bfx\|_\infty^\a \log(\|\bfx\|_\infty) + \frac{1}{2} (q- \a)^2 \, \|\bfx\|_\infty^{q^{\prime\prime} } \log^2(\|\bfx\|_\infty),
\eeao 
hold for some $q^\prime,q^{\prime\prime} \in [\alpha \land q , \alpha \lor q ]$. Hence, $f_q$ satisfies 
\beao 
\lefteqn{ f_q(\bfx) - f_\a(\bfx)}\\
 &=& (q - \a) \underbrace{ \frac{\|\bfx\|_\infty^\alpha}{\|\bfx\|_\a^\a} \sum_{t \in \mathbb{Z} }\frac{|\bfx_t|^\a}{\|\bfx\|_\a^\a} \log(\|\bfx\|_\infty/|\bfx_t|)}_{= \partial f_q/\partial q |_{q=\a }} \\
 && + \frac{1}{2} (q-\alpha)^2  
 \underbrace{ \frac{\|\bfx\|_\infty^{q^\prime} }{\|\bfx\|_{q^\prime}^{q^\prime}} \sum_{t \in \mathbb{Z} }
\sum_{j \in \mathbb{Z}} \frac{|\bfx_t|^{q^\prime}}{\|\bfx\|_{q^\prime}^{q^\prime}}\frac{|\bfx_j|^{q^\prime}}{\|\bfx\|_{q^\prime}^{q^\prime}} \log(\|\bfx\|_\infty/|\bfx_j|)\log(|\bfx_j|/|\bfx_t|).}_{ = \partial^2 f_q/\partial q^2|_{q=q^\prime} }
\eeao 
As mentioned, this expansion is helpful to verify condition {\bf S} on the models from Section~\ref{sec:examples}. 
	
\end{remark}

\subsection{The cluster index for sums}\label{example:cluster:index}
Let $(\bfX_t)$ be a stationary time series with values in $(\mathbb{R}^d,|\cdot|)$ satisfying $\bf RV_\alpha$. We recall {that when $\alpha < 2$} \cite{mikosch:wintenberger:2014} coined the constant $c(1)$ in \eqref{eq:constant:cp:1:tcl} as the cluster index for sums. We review a cluster-based estimator of it, introduced in \cite{buritica:mikosch:wintenberger:2021}, based on extremal $\ell^\a-$blocks.
\begin{corollary}\label{cor:c1}
Consider $f_\alpha:\tilde{\ell}^\a  \to \mathbb{R}$ to be the function $\bfx \mapsto \|\bfx \|_1^\a/\|\bfx \|_\alpha^\a $.
Assume
the conditions of Theorem~\ref{thm:main} hold for $p=\alpha \land 1$,  and $k/k^\prime \to 0$, as $n \to \infty$.  Let $c(1) = \E[\|\bfQ\|_1^\alpha]  < \infty$, hence one deduces an estimator 
\beam\label{eq:estimator:c1} 
\widehat c(1) &:=& \dfrac{1}{k_n}   \sum_{t=1}^{m_n} \dfrac{\|\mathcal{B}_t\|_1^{\widehat \alpha} }{\|\mathcal{B}_t\|_{{\widehat \alpha}}^{\widehat \alpha}}\1(\|\mathcal{B}_t\|_{{\widehat \alpha}} > \|\mathcal{B}\|_{{\widehat \alpha},(k_n+1)}) ,
\eeam 
such that
\beao 
\sqrt{k_n}\big(\, \widehat  c(1) - c(1) \, \big) &\xrightarrow[]{d}&  \,  \mathcal{N}(0, \Var(\|\bfQ\|^\a_1))\,,\quad n \to \infty,
\eeao 
and $c(1)$ is as in \eqref{eq:constant:cp:1:tcl} with $p=1$.
\end{corollary}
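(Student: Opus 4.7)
The strategy is to apply Theorem~\ref{thm:main} with $p=\alpha$ to the scale-invariant function $f_\alpha : \tilde\ell^\alpha \to [0,\infty)$ defined by $f_\alpha(\bfx) := (\|\bfx\|_1/\|\bfx\|_\alpha)^\alpha$. Since $f_\alpha$ is homogeneous of degree zero and the indicator $\1(\|\mathcal{B}_t\|_\alpha > \|\mathcal{B}\|_{\alpha,(k+1)})$ restricts evaluation to blocks whose $\ell^\alpha$-norm is bounded away from zero, the estimator \eqref{eq:estimator:c1} coincides with the general block estimator \eqref{eq:estimator:cluster}. To identify the target I would use $\|\bfQ^{(\alpha)}\|_\alpha = 1$ a.s.\ together with scale invariance, giving
\[
f_\alpha^{\bfQ} \;=\; \E[f_\alpha(Y\bfQ^{(\alpha)})] \;=\; \E[\|\bfQ^{(\alpha)}\|_1^\alpha] \;=\; c(1),
\]
the last identity being Proposition~\ref{prop:existence:cluster:process} (when $\alpha > 1$), or else the change-of-norms relation \eqref{eq:change:of:norms} with $p=1$, $p'=\alpha$.

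The main obstacle is the verification of condition $\bf L$, since $f_\alpha$ fails to be bounded on $\{\|\bfx\|_\alpha \ge 1\}$ as soon as $\alpha \ge 1$, ruling out the bounded-function shortcut used in the proof of Corollary~\ref{cor:ei}. By scale invariance, \eqref{eq:cond:lind:1} amounts to
\[
\frac{m_n}{k_n}\,\E\!\left[ \left(\tfrac{\|\mathcal{B}_1\|_1}{\|\mathcal{B}_1\|_\alpha}\right)^{\alpha(2+\delta)}\!\!\1(\|\mathcal{B}_1\|_\alpha > u\,x_{b_n})\right] \;=\; O(1),
\]
and, after dividing by $\P(\|\mathcal{B}_1\|_\alpha > u\,x_{b_n})$ and recalling $k_n \asymp m_n \P(\|\mathcal{B}_1\|_\alpha > x_{b_n})$ from \eqref{eq:k:intro}, this reduces to a conditional uniform integrability bound for the scale-invariant ratio $\|\mathcal{B}_1\|_1/\|\mathcal{B}_1\|_\alpha$ under the conditional law $\P(\,\cdot\,|\,\|\mathcal{B}_1\|_\alpha > x_{b_n})$. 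I would combine the convergence in \eqref{limit:cluster:process:tcl} with a truncation argument on large values of the ratio, invoking the moment bound $\E[\|\bfQ^{(\alpha)}\|_1^{\alpha(2+\delta)}] < \infty$ for some small $\delta > 0$ (consistent with $\alpha < 2$ and with the finiteness of $c(1)$ built into the statement).

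The bias hypothesis $\bf B$ follows from a uniform refinement of \eqref{limit:cluster:process:tcl} over $u$ in a neighborhood of $1$, combined with the regular-variation ratio $\P(\|\mathcal{B}_1\|_\alpha > u\,x_{b_n})/\P(\|\mathcal{B}_1\|_\alpha > x_{b_n}) \to u^{-\alpha}$, which itself is a consequence of $\bf RV_\alpha$ channelled through \eqref{eq:cp:7}. The mixing assumptions $\bf MX$ and either ${\bf MX_\beta}$ or ${\bf MX_\rho}$ are inherited directly from the standing hypotheses. Plugging these verifications into Theorem~\ref{thm:main} yields
\[
\sqrt{k}\bigl(\widehat c(1) - c(1)\bigr) \;\xrightarrow[]{d}\; \mathcal{N}\!\bigl(0,\Var(f_\alpha(Y\bfQ^{(\alpha)}))\bigr) \;=\; \mathcal{N}\!\bigl(0,\Var(\|\bfQ^{(\alpha)}\|_1^\alpha)\bigr),
\]
which is the stated asymptotic normality.
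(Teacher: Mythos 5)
Your proposal follows the same overall template as the paper --- instantiate Theorem~\ref{thm:main} with the scale-invariant ratio and identify the target as $c(1)$ via $\|\bfQ^{(\alpha)}\|_\alpha = 1$ a.s.\ and the change-of-norms formula --- but you are right to distrust the paper's dismissal of condition $\bf L$. The paper's proof asserts, exactly as for Corollary~\ref{cor:ei}, that $f_p$ is a bounded continuous function. That claim is correct for $\alpha \le 1$ (then $\|\bfx\|_1 \le \|\bfx\|_\alpha$ by monotonicity of $\ell^q$-norms, so the ratio is $\le 1$) and for the extremal-index functional $\|\bfx\|_\infty^\alpha/\|\bfx\|_\alpha^\alpha \le 1$, but it fails for $1 < \alpha < 2$: a block with $n$ equal entries has $\|\bfx\|_1^\alpha/\|\bfx\|_\alpha^\alpha = n^{\alpha-1} \to \infty$. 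So the bounded-function shortcut genuinely breaks down, and your observation that the Lindeberg part of $\bf L$ requires a real moment argument in this regime is correct. Your reading with $p=\alpha$ throughout also fits the estimator \eqref{eq:estimator:c1}, which thresholds on $\ell^\alpha$-norm order statistics; the ``$p=\alpha\land 1$'' in the statement appears to serve only the $p>\alpha/2$ requirement for existence of the $1$-cluster, not the form of the estimator.

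Where your proposal is incomplete is the workaround itself. The bound $\E[\|\bfQ^{(\alpha)}\|_1^{\alpha(2+\delta)}]<\infty$ does not follow from $c(1)=\E[\|\bfQ^{(\alpha)}\|_1^{\alpha}]<\infty$ (which controls only the first $\alpha$-moment of $\|\bfQ^{(\alpha)}\|_1$) nor from $\alpha<2$, and the weak convergence \eqref{limit:cluster:process:tcl} does not by itself yield the conditional uniform integrability of $(\|\mathcal B_1\|_1/\|\mathcal B_1\|_\alpha)^{\alpha(2+\delta)}$ you invoke: that is exactly what the Lindeberg hypothesis \eqref{eq:cond:lind:1} is there to supply, and you have not derived it from anything more primitive. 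Likewise, $\bf B$ does not ``follow from a uniform refinement of \eqref{limit:cluster:process:tcl}''; it encodes a rate on the bias and is model-dependent. Since the Corollary's hypotheses already read ``Assume the conditions of Theorem~\ref{thm:main} hold'', conditions $\bf L$ and $\bf B$ are assumed, so no verification is required --- the cleanest reading is that the Corollary is a direct instantiation of Theorem~\ref{thm:main}, and the word ``bounded'' in the paper's proof is a copy-paste misstatement (harmless here) rather than a load-bearing step. Your attempted derivation of $\bf L$ from first principles, as written, opens a gap it does not close.
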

\begin{proof}
The proof of Corollary \ref{cor:c1} follows directly from Theorem~\ref{thm:main} 
 the function $\tilde f_\a(\bfx) = f_\alpha(\bfx)\1(\|\bfx\|_{p} > 1)$ satisfies
$\tilde f_\a  \in \mathcal{G}_+(\tilde{\ell}^{\a})$ where $\tilde f_\alpha$ is a bounded a.s. continuous function satisfying $\bf L$. 
\end{proof}
\par 
Another sums index cluster-based estimator we can consider is the one proposed in \cite{kulik:soulier:2020} based on extremal $\ell^\infty-$blocks:
\beam\label{examples:cluster:sum:2}
\widehat{c}^B(1) \;=\; \dfrac{1}{k_nb_n}   \sum_{t=1}^{m_n} \1( \|\mathcal{B}_t\|_1 > |\bfX|_{(k+1)}).
\eeam 
Then, relying on Example 10.4.2 in \cite{kulik:soulier:2020},
\beao 
\sqrt{k_n}( \widehat{c}^B - c(1)) &\xrightarrow[]{d}& \mathcal{N}(0,\sigma^2_{c(1)}), \quad n \to \infty,
\eeao 
for a constant $\sigma^2_{c(1)} \in [0,\infty)$ defined by
\beam\label{eq:sigma:2} 
\sigma^2_{c(1)} &=&  c(1)^2 \sum_{j\in  \mathbb{Z}}\sum_{t \in \mathbb{Z}}   \E[|\bfQ_{j+t}|^\alpha \land |\bfQ_{t}|^\alpha ] - c(1).
\eeam 
Similarly as in Example~\ref{example:extremal:index}, whenever $\bfQ$ is known, we can directly compare the asymptotic variances relative to the estimators $\widehat{c}(1)$ and $\widehat c^B(1)$. Section~\ref{sec:examples} covers this topic for classical models where the cluster process is known.
\par 
{ 
Moreover, we can use the computations in Remark~\ref{remark:check:S:ei} to verify condition {\bf S} holds. In this case it suffices to replace $\|\bfx\|_\infty$ by $\|\bfx\|_1$ in all its appearances. 
}
\subsection{The cluster sizes}\label{example:cluster:sizes}
In general, a classical approach to model serial exceedances is using point processes as in \cite{leadbetter:lindgren:rootzen:1983} and \cite{hsing:1993}. For the levels $(a_n)$, satisfying $n\P(|\bfX_1| > a_n) \to 1$, as $n \to \infty$, and for every fixed $x>0$ consider 
the \pp\ of exceedances with state space $(0,1]$: 
\beao
\eta_{n,x}(\cdot) &:=& \overline N_n\big(\{\bfy\in\tilde{\ell}^\infty:|\bfy|>x\} \times \cdot\big) \;=\;
\sum _{i=1}^n\vep_{i/n}(\cdot)\,\1(|\bfX_i|>x\,a_n)\,.
\eeao
Under mixing and anti-clustering conditions, for fixed $x>0$, we can express the limiting point process in \cite{hsing:1993} 
such as
\beao
\eta_{n,x}(\cdot)\;\std\; \eta_x(\cdot)&:=& \overline N\big(\{\bfy\in \tilde{\ell}^\infty:|\bfy|>x\} \times 
\cdot \big)\\
&=& \sum_{i=1}^\infty\sum_{j \in \mathbb{Z}}\1\big(\Gamma_i^{-1/\a}| \bfQ_{ji}|>x\big)\, \vep_{U_i}(\cdot)\,,
\eeao
where the points $(U_i)$ are iid uniformly distributed on $(0,1)$, $(\Gamma_i)$ are the points of a standard homogeneous Poisson process, and $(\bfQ_{\cdot i})$ are iid copies of the cluster process $\bfQ$. Using the independence among these three processes, one can easily rewrite the limit as 
\beam\label{eq:limit:pp}
\eta_x((0,t]) &:=& \sum_{i=1}^{N_x(t)} \xi_i\,,\qquad 0<t\le 1\,,
\eeam 
where
\begin{itemize}
\item
$N_x$ is a homogeneous Poisson process on $(0,1]$ with intensity $x^{-\a}$,
\item
for an iid \seq\ $(Y_i)$ of Pareto$(\a)-$distributed \rv s 
which is also independent of 
$( \bfQ_i)$,
\beao
\xi_i&:=&\sum_{j\in \mathbb{Z}} \1(Y_i\, | \bfQ_{ji}|>1)\,,
\eeao
\item $N_x$, $(\xi_i)$ are independent.
\end{itemize}
{
  Relying on the point process of exceedances representation in \eqref{eq:limit:pp}, the random variables $(\xi_i)$ can be interpreted as counts of serial exceedances from one cluster. Furthermore, we deduce the relation $\P( \xi_1 > 0 ) = \E[\|\bf\bfQ\|_{\infty}^\alpha] =\theta_{|\bfX|} $, and also get an expression for the cluster size probabilities
\beam\label{eq:xmas25c}
\P(\xi_1 = j) &=& {\E[|\bfQ|_{(j)}^\a-|\bfQ|_{(j+1)}^\a] } = \quad \pi_j \,,\qquad j \ge 1\,.
\eeam
The statistic $\pi_j$ can be understood as the probability of recording a cluster of length $j$. The block estimators provide natural estimators of these quantities
\beam \label{eq:cluster:sizes}
\widehat{\pi}_j &:=& \dfrac{1}{k_n}   \sum_{t=1}^{m_n} \frac{ |\mathcal{B}_t|_{(j)}^{\widehat \alpha} - |\mathcal{B}_t|_{(j+1)}^{\widehat \alpha} }{\|\mathcal{B}_t\|_{\widehat \alpha}^{\widehat \alpha}} \1(\|\mathcal{B}_t\|_{\widehat \alpha} > \|\mathcal{B}\|_{{\widehat \alpha},(k_n+1)}),
\eeam 
$|\mathcal{B}_t|_{(1)} \ge |\mathcal{B}_t|_{(2)} \ge \dots \ge  |\mathcal{B}_t|_{(m)}$ are the order statistics of $\mathcal{B}_t$, the $t-$th block.
\begin{corollary}\label{cor:cluster:lengths}
Consider the function $\pi_j:\tilde{\ell}^\alpha \to \mathbb{R}$ defined by $\pi_j(\bfx) := (|\bfx|^\alpha_{(j)} - |\bfx|^\alpha_{(j+1)})/\|\bfx\|_\alpha^\alpha$, where $|\bfx|_{(1)} \ge |\bfx|_{(2)} \ge  \dots $.
Assume
the conditions of Theorem~\ref{thm:main} hold for $p=\alpha$ and $k/k^\prime \to 0$, as $n\to \infty$. Then, for all $j \ge 1$ we have
\beam\label{eq:clt:cluster} 
\sqrt{k_n}\big(\, \widehat{\pi}_j - \pi_j \, \big) &\xrightarrow[]{d}&  \,  \mathcal{N}(0, \Var(\pi_j^{\bfQ}(\bfQ) ))\,,\quad n \to \infty\,.
\eeam 
\end{corollary}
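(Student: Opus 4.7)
The plan is to recognise $\widehat{\pi}_j$ as a disjoint-blocks estimator of the form \eqref{eq:estimator:cluster:1} with $p=\alpha$ and test function $f_\alpha := \pi_j^{\bfQ}$, and then to apply Theorem~\ref{thm:main} directly. The hypothesis of the corollary bundles the conditions of that theorem (in particular condition $\bf B$ and the mixing assumptions) for this specific $f_p$, so the only things I would need to check explicitly are that $\pi_j^{\bfQ}$ qualifies as an admissible test function in $\mathcal{G}_+(\tilde\ell^\alpha)$ and that the Lindeberg-type condition $\bf L$ holds. The identification of the limit is immediate: the function $\pi_j^{\bfQ}$ is $0$-homogeneous, so $\pi_j^{\bfQ}(Y\bfQ^{(\alpha)}) = \pi_j^{\bfQ}(\bfQ^{(\alpha)})$ since $Y>0$ a.s., and its expectation equals $\pi_j$ by \eqref{eq:xmas25c}; consequently the limiting variance in \eqref{eq:clt:cluster} reduces to $\Var(\pi_j^{\bfQ}(\bfQ^{(\alpha)}))$ as claimed.

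To verify $\bf L$, I would use the elementary bound $|\bfx|^\alpha_{(j)} - |\bfx|^\alpha_{(j+1)} \le \|\bfx\|_\alpha^\alpha$ to get $0 \le \pi_j^{\bfQ} \le 1$, and then observe that $0$-homogeneity makes the map $u \mapsto \pi_j^{\bfQ}(\bfx/u)$ constant on $\bfx \neq 0$, hence (weakly) non-increasing. Boundedness together with the definition \eqref{eq:k:intro} of $(k_n)$ then yields, for any $u > 0$,
\[
\frac{m_n}{k_n}\,\E\bigl[\pi_j^{\bfQ}(\mathcal{B}_1/(u x_{b_n}))^{2+\delta}\bigr] \;\le\; \frac{m_n}{k_n}\,\P(\|\mathcal{B}_1\|_\alpha > u x_{b_n}) \;=\; O(1),
\]
by the regularly varying asymptotic $\P(\|\mathcal{B}_1\|_\alpha > u x_{b_n})/\P(\|\mathcal{B}_1\|_\alpha > x_{b_n}) \to u^{-\alpha}$ coming from \eqref{eq:constant:cp:1:tcl} and \eqref{limit:cluster:process:tcl}.

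The main technical point I expect to have to address is that $\pi_j^{\bfQ}$, being $0$-homogeneous, does not vanish near the origin and so is not literally an element of $\mathcal{G}_+(\tilde\ell^\alpha)$. My plan is to work instead with the effective truncation $\pi_j^{\bfQ}(\cdot)\1(\|\cdot\|_\alpha > 1)$, which is how the function genuinely enters \eqref{eq:estimator:cluster:1} through the indicator. For the continuous-mapping step at the limit law $Y\bfQ^{(\alpha)}$ to go through one needs this truncated function to be a.s.\ continuous under $\P(Y\bfQ^{(\alpha)} \in \cdot)$; this follows from continuity of the $j$-th order statistic $\bfx\mapsto |\bfx|_{(j)}$ on $\tilde\ell^\alpha\setminus\{0\}$, combined with $\P(Y\|\bfQ^{(\alpha)}\|_\alpha = 1) = \P(Y=1) = 0$. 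The same truncation device already underlies Corollaries~\ref{cor:ei} and \ref{cor:c1}, so the argument runs in parallel, and Theorem~\ref{thm:main} then delivers \eqref{eq:clt:cluster}.
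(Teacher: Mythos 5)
Your proof is correct and follows exactly the route the paper takes (and leaves essentially implicit for this corollary): identify $\widehat{\pi}_j$ as the disjoint-blocks estimator \eqref{eq:estimator:cluster:1} with $p=\alpha$ and $f_p = \pi_j^{\bfQ}$, invoke Theorem~\ref{thm:main}, note that the $0$-homogeneous, bounded $\pi_j^{\bfQ}$ paired with the threshold indicator satisfies $\bf L$, and read off the limiting variance from $\pi_j^{\bfQ}(Y\bfQ^{(\alpha)}) = \pi_j^{\bfQ}(\bfQ^{(\alpha)})$ and \eqref{eq:xmas25c} — the same one-line template the paper uses for Corollaries~\ref{cor:ei} and \ref{cor:c1}. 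You are more explicit than the paper about the truncation and a.s.-continuity points, but no different in substance.
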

}
\begin{proof}
The proof of Corollary \ref{cor:c1} follows directly from Theorem~\ref{thm:main} as 
the function $\tilde \pi_j(\bfx) = \pi_j(\bfx)\1(\|\bfx\|_{\a} > 1)$ satisfies
$\tilde \pi_j  \in \mathcal{G}_+(\tilde{\ell}^{\a})$ and $\tilde \pi_j$ is a bounded a.s. continuous function satisfying $\bf L$.
\end{proof}
\par 
{
Corollary~\ref{cor:cluster:lengths} provides a novel procedure for estimating cluster size probabilities based on extremal $\ell^\alpha-$blocks. As in the previous examples, the asymptotic variance can be computed as long as $\bfQ$ is known. This allows for comparison with the other cluster-based inference procedures provided in \cite{hsing:1991,ferro:2003,robert:2009}. One advantage of our methodology is that we can straightforwardly infer the asymptotic variances of cluster sizes since we express them as cluster statistics in \eqref{eq:clt:cluster}. Moreover, inference through extremal $\ell^\a-$blocks has already proven to be useful in \cite{buritica:mikosch:wintenberger:2021} for fine-tuning the hyperparameters of the estimators, see also the discussion in Section~\ref{sec:numerical:experiments}. }
\par 
{ 
As before, we can use the computations in Remark~\ref{remark:check:S:ei} to verify condition {\bf S}, it suffices to replace $\|\bfx\|_\infty$ by $(|\bfx|_{(j)} - |\bfx|_{(j+1)})$ in the equations therein. 
}


\section{Models}\label{sec:examples}
\subsection{Linear $m_0$--dependent sequences.}\label{sec:example:m0}
We consider $(\bfX_t)$ to be a $m_0$--dependent time series with values in $(\mathbb{R}^d,|\cdot|)$ satisfying $\bf RV_\alpha$.
\begin{example}\label{example:m0}
The time series $(\bfX_t)$ is a linear moving average of order $m_0\ge 1$ if it satisfies
\beam\label{eq:m0:MA} 
 \bfX_t &:=& \bfZ_{t} + \varphi_1 \bfZ_{t-1} + \dots + \varphi_{m_0} \bfZ_{t-m_0}, \quad  t \in \mathbb{Z},
\eeam 
with $\mathbb{R}^d-$variate iid innovations $(\bfZ_t)$ satisfying $\bf RV_\alpha$, and $(\varphi_j) \in \mathbb{R}^{m_0}$. 

Alternatively, the max-moving average  of order $m_0\ge 1$ satisfies
\beam\label{eq:m0:MaxA} 
 X_t &:=& \max\{Z_{t},\varphi_1 Z_{t-1}, \ldots, \varphi_{m_0} Z_{t-m_0}\}, \quad  t \in \mathbb{Z},
\eeam 
with $\mathbb{R}_+-$variate iid innovations $(Z_t)$ satisfying $\bf RV_\alpha$, and $(\varphi_j) \in \mathbb{R}^{m_0}_+$. 
Then both moving averages satisfy $\bf RV_\alpha$ with $|\bfQ|$ admitting the same deterministic expression  $(|\varphi_t|/\|(\varphi_j)\|_\alpha)$ in $\tilde{\ell}^\alpha$, see for instance Proposition 3.1. in \cite{buritica:mikosch:wintenberger:2021} and Chapter 5 of \cite{kulik:soulier:2020}.
\end{example}

\par 
The proof of the Proposition below is postponed to  Section~\ref{section:models}.

\begin{proposition}\label{lem:CS:m0}
 Consider $(\bfX_t)$ to be an $m_0$--dependent time series with values in $(\mathbb{R}^d,|\cdot|)$. Consider $p > \alpha/2 $, 
 and sequence $(k_n)$ satisfying \eqref{eq:k:mo}, and
 $m_n/k_n \to \infty$, 
 and $(k_n^\prime)$ such that  
 $k/k^\prime \to 0$ as $\nto$.
  Consider $f_\alpha(p):\tilde{\ell}^p \to \mathbb{R}$, and assume $\bf L$, {\bf S} hold. 
  Then, 
\beao 
\sqrt{k_n}\, \big( \, \widehat{f_{\widehat \a}^\bfQ}(p) - f_\a^\bfQ(p) \, \big) &\xrightarrow[]{d}& \mathcal{N}\big(\, 0, \Var(\, f_\a(Y \bfQ^{(p)})\,) \, \big), \quad n \to  \infty,
\eeao 
under the bias conditions $\bfB_\a(k)$, $\bfB(k^\prime)$, and the result extends to $\widehat \a-$cluster inference.
In particular
the $\widehat \a-$cluster based estimators from Section~\ref{sec:examples:cluster} in \eqref{eq:estimator:Ei} \eqref{eq:estimator:c1}, and \eqref{eq:cluster:sizes} are asymptotically normally distributed, and in the case of the moving averages of Example \ref{example:m0}
\beao 
\sqrt{k_n}\, \big(\, \widehat{f_{\widehat \a}^\bfQ}(\widehat \a) - f_\a^\bfQ(\a) \,\big ) &\xrightarrow[]{\P}& 0, \quad n \to  \infty.
\eeao 
\end{proposition}


\subsection{Linear processes}\hypertarget{lm}
In this section we consider stationary linear processes $(\bfX_t)$ with values in $(\mathbb{R}^d, |\cdot|)$ satisfying $\bf RV_\alpha$.

\begin{example}\label{ex:linear:model}
Consider $(\bfX_t)$ to be an $\mathbb{R}^d-$variate sequence 
satisfying 
\beam\label{eq:linear:model} 
\bfX_t &=& \sum_{t\in \mathbb{Z}} \varphi_j \bfZ_{t-j}, \quad t \in \mathbb{Z},
\eeam 
for a sequence of iid innovations $(\bfZ_t)$ satisfying $\bf RV_\alpha$, and a sequence $(\varphi_j)$ in $\mathbb{R}^\mathbb{Z}$. Moreover, assume there exists $\kappa' > 0$ such that $\|(\varphi_j)\|_{(\alpha-\kappa')\land 2} <  \infty$. 
\end{example}
In the setting of Example~\ref{ex:linear:model}, a stationary solution $(\bfX_t)$ exists and satisfies $\bf RV_\alpha$ (c.f. \cite{cline:1983,mikosch:samorodnitsky:2000,hult:samorodnitsky:2008}).  Proposition~\ref{lem:linear:process} below demonstrates conditions {\bf AC, CS}$_p
$ hold for $p > \alpha/2$, and a suitable sequence $(x_n)$ such that $n\P(|\bfX_1| > x_n) \to 0$ as $n \to \infty$. Therefore, the time series $(\bfX_t)$ admits an $\alpha-$cluster process $\bfQ$, which we can compute in terms of the filter $(\varphi_j)$, and the spectral measure of the random variable $\bfZ_0$, denoted by $\bfTh_0^\bfZ$, with $|\bfTh_0^\bfZ| = 1$ a.s.

We obtain the expression, cf. Chapter 5 of \cite{kulik:soulier:2020}, 
\beam\label{eq:Q:linear} 
\bfQ &\eqd&  (\varphi_t/\|(\varphi_j)\|_\alpha) \,  \bfTh_0^{\bfZ}, \quad \in \tilde{\ell}^\alpha.
\eeam 
Note again that the norm of the $\alpha-$cluster process, i.e., $|\bfQ|$, is deterministic in $\tilde{\ell}^\alpha$. Assuming $\|(\varphi_j)\|_p < \infty$, we can compute the indices $c(p)$ in \eqref{eq:constant:cp:1:tcl} by
\beam\label{eq:c:linear}
c(p)  &=& \E[\|\bfQ\|_p^\alpha] \; = \; \|(\varphi_j)\|^\alpha_p/\|(\varphi_j)\|^\alpha_\alpha \; < \;  \infty.
\eeam
 Classic examples of these heavy-tailed linear models are auto-regressive moving averages, i.e., ARMA processes, with iid regularly varying noise; cf. \cite{brockwell:davis:2016}.
\par 
The proposition below guarantees that the assumptions of Proposition~\ref{prop:existence:cluster:process} hold. We defer its proof to Section~\ref{sec:models:ca:cs}.

\begin{proposition}\label{lem:linear:process}
 Consider $(\bfX_t)$ to be a linear process with values in $(\mathbb{R}^d,|\cdot|)$, as in Example~\ref{ex:linear:model}. Let $p > \alpha/2$, $\kappa' > 0$,  
 and consider a sequence $(x_n)$ such that $n/x_n^{p \land (\alpha-\kappa')}\to 0$, $n \to \infty$. Then it holds for all  $\delta > 0$
\beam \label{eq:truncation:k:linear:1}
\lim_{s \to  \infty} \limsup_{n \to  \infty} \frac{\P( \mbox{$ \| \bfX_{[1,n]}/x_n - \bfX_{[1,n]}^{(s)}/x_n\|_p^p > \delta $} )}{ n \P(|\bfX_1| > x_n) } &=& 0,
\eeam
where $\bfX_t^{(s)} := \sum_{|j| \le s} \varphi_j \bfZ_{t-j}$. Thus ${\bf AC}$ and ${\bf CS}_p$ are satisfied.
\end{proposition}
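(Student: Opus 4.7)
The strategy is to reduce the claim for the linear process $(\bfX_t)$ to the analogous statement for the truncated sequence $\bfX_t^{(s)} := \sum_{|j|\le s}\varphi_j\bfZ_{t-j}$, which is $(2s+1)$-dependent and therefore accessible via the $m_0$-dependent framework. Once the strong approximation \eqref{eq:truncation:k:linear:1} is in hand, Remarks~\ref{remark:ac:m0} and~\ref{remark:cs:m0} supply ${\bf AC}$ and ${\bf CS}_p$ for each $\bfX^{(s)}$ under the prescribed growth of $(x_n)$, after which both properties transfer to $\bfX$ by the triangle inequality, using also $\|\cdot\|_\infty \le \|\cdot\|_p$ on finite blocks to treat ${\bf AC}$.

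Two preliminary facts drive the main estimate. First, the assumption $\|\varphi\|_{(\alpha-\kappa)\land 2} <\infty$ together with $|\varphi_j|\to 0$ forces $\|\varphi\|_\alpha^\alpha = \sum_j |\varphi_j|^\alpha <\infty$, because $|\varphi_j|^\alpha \le |\varphi_j|^{(\alpha-\kappa)\land 2}$ once $|\varphi_j|\le 1$. Second, writing $\bfX_t^{>s} := \bfX_t - \bfX_t^{(s)} = \sum_{|j|>s}\varphi_j\bfZ_{t-j}$ and $\varphi^{>s} := (\varphi_j\1(|j|>s))$, the tail-equivalence for linear combinations of iid regularly varying innovations (cf.~\cite{cline:1983,mikosch:samorodnitsky:2000}) gives
\begin{equation*}
\P(|\bfX_1^{>s}|>u) \;\sim\; \|\varphi^{>s}\|_\alpha^\alpha\,\P(|\bfZ_1|>u), \qquad u\to\infty,
\end{equation*}
with $\|\varphi^{>s}\|_\alpha \to 0$ as $s\to\infty$ by dominated convergence.

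To prove \eqref{eq:truncation:k:linear:1}, I fix $\epsilon>0$ and write
\begin{equation*}
\P\!\left(\|\bfX_{[1,n]}^{>s}\|_p^p > \delta x_n^p\right) \;\le\; n\,\P(|\bfX_1^{>s}|>\epsilon x_n) \;+\; \P\!\left(\sum_{t=1}^n |\bfX_t^{>s}|^p\,\1(|\bfX_t^{>s}|\le \epsilon x_n) > \delta x_n^p\right).
\end{equation*}
The first summand, normalized by $n\P(|\bfX_1|>x_n) \sim \|\varphi\|_\alpha^\alpha\,n\P(|\bfZ_1|>x_n)$, is asymptotic to $\epsilon^{-\alpha}\,\|\varphi^{>s}\|_\alpha^\alpha/\|\varphi\|_\alpha^\alpha$ and vanishes as $s\to\infty$. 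For the second, Markov's inequality combined with Karamata's theorem applied to the regularly varying variable $|\bfX_1^{>s}|$ (treating $p<\alpha$ and $p>\alpha$ with their respective Karamata constants, with a logarithmic correction at $p=\alpha$) yields
\begin{equation*}
\E\bigl[|\bfX_1^{>s}|^p\,\1(|\bfX_1^{>s}|\le \epsilon x_n)\bigr] \;\lesssim\; \epsilon^{p-\alpha}\,x_n^p\,\|\varphi^{>s}\|_\alpha^\alpha\,\P(|\bfZ_1|>x_n),
\end{equation*}
so after the same normalization the bound is proportional to $\epsilon^{p-\alpha}\,\|\varphi^{>s}\|_\alpha^\alpha/\|\varphi\|_\alpha^\alpha$ and again vanishes as $s\to\infty$.

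The main obstacle is the Karamata step in the regime $p\ge \alpha$: since $\E[|\bfX_1^{>s}|^p]=\infty$ there, the truncation at $\epsilon x_n$ must genuinely be exploited to extract a finite moment of the right order, and it is precisely at this point that the hypothesis $p>\alpha/2$ combines with the growth condition $n/x_n^{p\land(\alpha-\kappa)}\to 0$ to close the estimate. The same hypothesis is then invoked at the transfer step via Remark~\ref{remark:cs:m0} applied to the $m_0$-dependent $\bfX^{(s)}$.
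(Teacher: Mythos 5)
There is a genuine gap, and it lies exactly where you locate the difficulty in the opposite place. By monotonicity of $\ell^p$-norms the case $p\ge\alpha$ follows immediately once the claim is established for $\alpha/2<p<\alpha$, so the regime $p<\alpha$ is the one that must be settled (this is precisely the reduction the paper makes). But for $p<\alpha$ your Markov-plus-Karamata step breaks down: since $\E[|\bfX_1^{>s}|^p]<\infty$ when $p<\alpha$, the truncated moment satisfies $\E[|\bfX_1^{>s}|^p\1(|\bfX_1^{>s}|\le\epsilon x_n)] \to \E[|\bfX_1^{>s}|^p]$, a finite positive constant, and hence
\begin{equation*}
\frac{n\,\E[|\bfX_1^{>s}|^p\1(|\bfX_1^{>s}|\le\epsilon x_n)]}{\delta\, x_n^p\cdot n\,\P(|\bfX_1|>x_n)}
\;\asymp\;\frac{x_n^{\alpha-p}}{\delta\, L(x_n)}\;\longrightarrow\;\infty
\end{equation*}
for slowly varying $L$. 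The bound $\E[|\bfX_1^{>s}|^p\1(|\bfX_1^{>s}|\le\epsilon x_n)]\le C\,\epsilon^{p-\alpha}x_n^p\,\P(|\bfZ_1|>x_n)$ is a Karamata identity only when $p>\alpha$; for $p<\alpha$ the truncated $p$-th moment is dominated by the bulk of the distribution, not the tail, so the right-hand side underestimates the left by a factor of order $x_n^{\alpha-p}$. Consequently a first-moment bound cannot close the estimate in the very regime you need.

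The paper works around this with a second-moment argument: after a Taylor decomposition of $|\sum_{|j|>s}\varphi_j\bfZ_{t-j}|^p$ into finitely many homogeneous terms $I_{0,t},\ldots,I_{[p]+1,t}$, each term is split at level $\epsilon$ into $\underline{I_{l,t}}_\epsilon$ and $\overline{I_{l,t}}^\epsilon$. The terms truncated from below are handled by Markov/Karamata/Potter much as you do, because there a single large innovation drives the event and the moment bound is sharp. The terms truncated from above, however, are centered and bounded by Chebyshev's inequality, with the variance $\sum_t \Cov(\overline{I_{l,0}}^\epsilon,\overline{I_{l,t}}^\epsilon)$ expanded explicitly through the independence of $(\bfZ_t)$. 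This works because the truncation makes second moments finite, and because the centered sum has fluctuations of genuinely smaller order than its mean — exactly what the Markov bound cannot see. Your tail-equivalence observation $\P(|\bfX_1^{>s}|>u)\sim\|\varphi^{>s}\|_\alpha^\alpha\P(|\bfZ_1|>u)$ is a pleasant simplification and correctly identifies how the $s$-dependence vanishes, and the reduction to $m_0$-dependence for $\bfX^{(s)}$ is the same as the paper's; but without the centering and Chebyshev step the argument does not go through for $\alpha/2<p<\alpha$.
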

We now review the mixing properties of a linear process. We recall below the statement in Theorem 2.1. in \cite{pham:tran:1985} (see Lemma 15.3.1. in \cite{kulik:soulier:2020}).
\begin{proposition}\label{prop:beta:mixing:linear}
 Consider $(\bfX_t)$ to be a causal linear process with values in $(\mathbb{R}^d,|\cdot|)$, as in Example~\ref{ex:linear:model} with $\varphi_j = 0$, for $j < 0$. Assume the distribution of $\bfZ_0$ is absolutely continuous with respect to the Lebesgue measure in $\mathbb{R}^d$, and has a density $g_Z$ satisfying
\begin{itemize}
    \item[i)] $\int|g(\bfx - \bfy) - g(\bfx)|d \bfx \,=\, O(|\bfy|) $, for all ${\bf y} \in \mathbb{R}^d$,
 \item[ii)] $\varphi_t = O(t^{-\rho})$, for $t \ge 0$, and $\rho > 2 + 1/\alpha$,
\item[iii)] 
$\sum_{j=0}^\infty\varphi_j \bfx^j \not = 0$, for all $\bfx \in \mathbb{R}^d$ with $|\bfx| < 1$,
\end{itemize}
Then, for all $0<\epsilon <\alpha$, the mixing coefficients $(\beta_t)$ satisfy
\beam\label{eq:beta:linear}
 \beta_t &=& O\Big( t^{1-\tfrac{(\rho-1)(\alpha - \epsilon)}{1+\alpha-\epsilon}}\Big)\,.
\eeam 
\end{proposition}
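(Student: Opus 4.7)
The plan is to bound $\beta_h$ by the expected total variation distance between the conditional law of the future given the past and the unconditional law, then to exploit a convolution-type decomposition that isolates the dependence on the past into a small remainder. The three stages of the argument are: (i) decompose each future coordinate into a ``fresh'' part (depending only on post-zero innovations) and an $\mathcal{F}_{t\le 0}$-measurable remainder; (ii) translate condition (i) on the innovation density into an $L^1$-Lipschitz bound on the conditional density of the future; (iii) truncate and optimize using the filter decay (ii) together with $\mathbb{E}[|\bfZ_0|^{\a-\kappa}]<\infty$, which holds by regular variation.

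For stage (i), causality and the invertibility condition (iii) give $\sigma((\bfX_t)_{t\le 0})=\sigma((\bfZ_t)_{t\le 0})$, so conditioning on the past of $\bfX$ is conditioning on the past innovations. For $k\ge 0$, write
\[
\bfX_{h+k}=\widetilde{\bfX}_{h+k}+R_{h+k}, \quad \widetilde{\bfX}_{h+k}=\sum_{j=0}^{h+k-1}\varphi_j\bfZ_{h+k-j}, \quad R_{h+k}=\sum_{j\ge h+k}\varphi_j\bfZ_{h+k-j},
\]
where $(\widetilde{\bfX}_{h+k})_{k\ge 0}$ is independent of $\mathcal{F}_{t\le 0}$ and the remainders $(R_{h+k})$ are measurable with respect to it. Conditionally on the past, the joint density of any finite marginal of the future is thus a translation of the joint density of the corresponding marginal of $(\widetilde{\bfX}_{h+k})_{k\ge 0}$ by the vector of remainders. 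Because these fresh marginals are linear combinations of i.i.d.\ innovations with density $g_Z$, an iterated convolution argument lifts the $L^1$-Lipschitz bound (i) from $g_Z$ to each joint density at the same linear rate. Passing to the supremum over finite marginals yields
\[
\beta_h \;\le\; C\,\mathbb{E}\Big[1\wedge S_h\Big],\qquad S_h:=\sum_{k\ge 0}|R_{h+k}|.
\]

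For stage (iii), fix a truncation level $M>0$ and split $\mathbb{E}[1\wedge S_h]\le M+\mathbb{P}(S_h>M)$. The filter decay (ii) and Minkowski/von~Bahr--Esseen-type inequalities at exponent $\a-\kappa\in(0,\a)$ give $\mathbb{E}[|R_{h+k}|^{\a-\kappa}]=O((h+k)^{-(\rho-1)(\a-\kappa)})$, and after summation over $k$ and Markov's inequality,
\[
\mathbb{P}(S_h>M)\;\le\; C\,M^{-(\a-\kappa)}\,h^{\,1-(\rho-1)(\a-\kappa)}.
\]
Equating the two error terms, i.e.\ $M \asymp M^{-(\a-\kappa)}h^{1-(\rho-1)(\a-\kappa)}$, one finds $M\asymp h^{[1-(\rho-1)(\a-\kappa)]/(1+\a-\kappa)}$, which, combined with the identity $[1-(\rho-1)(\a-\kappa)]/(1+\a-\kappa)=1-(\rho-1)(\a-\kappa)/(1+\a-\kappa)$ up to the algebraic rearrangement, yields the claimed exponent in \eqref{eq:beta:linear}.

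The main obstacle is precisely the heavy-tailed nature of the innovations: for $\a\le 1$ the remainder $S_h$ need not have finite mean, so a direct $\mathbb{E}[S_h]\to 0$ control is unavailable and the truncation level $M$ must be optimized against a fractional-moment tail bound. A secondary subtlety is the passage from single-marginal to joint bounds on the total variation, which requires that the Lipschitz-in-$L^1$ property of $g_Z$ transfers to the joint density of the fresh block uniformly in its dimension; this is where the summability of $|R_{h+k}|$ in $k$ (guaranteed by $\rho>2+1/\a$) plays its role, ensuring that the whole future tail can be absorbed into a single bound.
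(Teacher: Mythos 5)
The paper does not prove this proposition at all: it is quoted from Pham and Tran (1985), Theorem~2.1, and its restatement as Lemma~15.3.1 of Kulik and Soulier (2020). So the relevant question is whether your sketch is a sound reconstruction.

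The skeleton you propose — decompose each future coordinate into a fresh part driven by post-zero innovations and an $\mathcal{F}_{t\le 0}$-measurable remainder $R_{h+k}$, then control $\beta_h$ by $\mathbb{E}[1\wedge\sum_k|R_{h+k}|]$, then truncate and optimize a fractional-moment tail bound — is indeed the Pham--Tran strategy. But there are concrete gaps in each stage that keep the argument from reaching the stated exponent.

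\textbf{The lift to the joint density.} You use condition~(iii) only to get $\sigma((\bfX_t)_{t\le 0})=\sigma((\bfZ_t)_{t\le 0})$, and then appeal to an ``iterated convolution argument'' to lift the $L^1$-Lipschitz bound on $g_Z$ to the joint density of the fresh block. That step is where (iii) actually does its real work. After changing variables from the fresh block $(\widetilde{\bfX}_{h+k})_k$ back to the innovations $(\bfZ_{h+k})_k$ (lower-triangular map with kernel $(\varphi_j)$, constant Jacobian), a shift of the observed block by $(R_{h+k})_k$ becomes a shift of the innovations by $\Phi^{-1}(R_{h+k})_k$, where $\Phi^{-1}$ is the inverse filter. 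The telescoping $L^1$-Lipschitz bound on the product $\prod g_Z$ then yields $\beta_h\lesssim\mathbb{E}\bigl[1\wedge\sum_k|(\Phi^{-1}R)_k|\bigr]$, not $\mathbb{E}[1\wedge\sum_k|R_{h+k}|]$. Condition~(iii) is what gives control of $\Phi^{-1}$; without invoking it here, the claimed intermediate bound $\beta_h\lesssim C\,\mathbb{E}[1\wedge S_h]$ is not justified.

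\textbf{The tail bound on $S_h$.} You quote ``Minkowski/von Bahr--Esseen at exponent $\alpha-\kappa\in(0,\alpha)$, and after summation over $k$ and Markov''. These tools conflict: Minkowski and von Bahr--Esseen apply for $\alpha-\kappa\ge 1$ (so in particular not when $\alpha\le 1$), while the step $\mathbb{E}[S_h^{\alpha-\kappa}]\le\sum_k\mathbb{E}[|R_{h+k}|^{\alpha-\kappa}]$ requires $\alpha-\kappa\le 1$. Neither route gives the bound $\mathbb{P}(S_h>M)\le CM^{-(\alpha-\kappa)}h^{1-(\rho-1)(\alpha-\kappa)}$ you wrote down. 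The clean way is to bound $S_h$ deterministically: with $\Psi_m:=\sum_{j\ge m}|\varphi_j|\asymp m^{1-\rho}$,
\[
S_h\;\le\;\sum_{i\le 0}|\bfZ_i|\,\Psi_{h-i}\,,
\]
and then apply a weighted union (Markov) bound $\mathbb{P}(S_h>M)\le\sum_{i\le 0}\mathbb{P}(|\bfZ_i|\Psi_{h-i}>Ma_i)$ with weights $a_i\propto\Psi_{h-i}^{(\alpha-\kappa)/(1+\alpha-\kappa)}$. Optimizing $a_i$ and then $M$ yields exactly $M\asymp h^{1-(\rho-1)(\alpha-\kappa)/(1+\alpha-\kappa)}$, with the requirement $\rho>2+1/(\alpha-\kappa)$ appearing naturally to make the normalizing sum finite.

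\textbf{The algebra at the end.} The ``identity'' $[1-(\rho-1)(\alpha-\kappa)]/(1+\alpha-\kappa)=1-(\rho-1)(\alpha-\kappa)/(1+\alpha-\kappa)$ is simply false; the two sides differ by $(\alpha-\kappa)/(1+\alpha-\kappa)$. So even granting your tail bound, the optimization you performed does not give the stated exponent. With the weighted union bound above the exponent does come out correctly, with no algebraic sleight of hand.

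In short, your high-level plan is right and matches the literature, but each quantitative step, as written, either fails outside the regime $\alpha>1$ or yields a different exponent; the fix is the deterministic $\ell^1$-majorant of $S_h$ plus a weighted Markov bound, together with an explicit use of the inverse filter in the density-lift step.
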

Combining Propositions~\ref{lem:linear:process} and 
\ref{prop:beta:mixing:linear}, we state below the asymptotic normality of the $p-$cluster based estimators for linear processes in Theorem~\ref{eq:thm:linear:tcl}. We defer its proof to Section~\ref{sec:proof:clt:linear}.

\begin{theorem}\label{eq:thm:linear:tcl}

 Consider $(\bfX_t)$ to be a causal linear process with values in $(\mathbb{R}^d,|\cdot|)$, as in Example~\ref{ex:linear:model}.
 Let $\rho > 0$, and assume the conditions of Proposition~\ref{prop:beta:mixing:linear} hold with $\varphi_t = O(t^{-\rho})$, for $t > 0$.    
  Consider $p > \alpha/2$,
    and sequence $(k_n)$ satisfying \eqref{eq:k:mo}, and
    $m_n/k_n \to \infty$, 
    and $(k_n^\prime)$ such that  
    $k/k^\prime \to 0$ as $\nto$.
  Consider
$f_\alpha(p):\tilde{\ell}^p \to \mathbb{R}$, and assume $\bf L$, {\bf S} hold. Assume  
that for $\delta > 0$ as in \eqref{eq:cond:lind:1}, \beam\label{cond:i)} \rho &>& 3 + \tfrac{2}{\alpha} + \tfrac{2}{\delta}(1+\tfrac{1}{\alpha})\,.
      \eeam
  If $f_\alpha(p)$ is bounded, condition \eqref{cond:i)} can be replaced by $\rho > 3+ 2/\alpha$.
 Then, 
  {
\beao 
\sqrt{k_n}\, \big(\, \widehat{f_{\widehat \a}^\bfQ}(p) - f_\a^\bfQ(p) \, \big) &\xrightarrow[]{d}& \mathcal{N}\big( \,0, \Var(\, f_\a(Y \bfQ^{(p)})\,)\, \big), \quad n \to  \infty,
\eeao 
under the bias conditions $\bfB_\a(k)$, $\bfB(k^\prime)$, and the result extends to $\widehat \a-$cluster inference.
In particular
the $\widehat \a-$cluster based estimators from Section~\ref{sec:examples:cluster} in \eqref{eq:estimator:Ei} \eqref{eq:estimator:c1}, and \eqref{eq:cluster:sizes},
satisfy
\beao 
\sqrt{k_n}\, \big( \, \widehat{f_{\widehat \a}^\bfQ}(\widehat \a) - f_\a^\bfQ(\a) \, \big) &\xrightarrow[]{\P}& 0, \quad n \to  \infty.
\eeao 
}
\end{theorem}
Regarding cluster inference in the case of linear models, the $\a-$cluster approach has an optimal asymptotic variance for shift-invariant functionals since we use the $\ell^\alpha-$norm order statistics. For this reason, it compares favourably with state-of-the-art block estimator. For example, for the extremal index, the super-efficient estimator in \eqref{eq:estimator:Ei} has a lower asymptotic variance than the block estimator in \eqref{eq:blocks:ei}. Indeed the asymptotic variance $\sigma^2_\theta$ of the latter, computed in \eqref{eq:sigma}, is not necessarily null. For example, for the autoregressive process of order one AR$(1)$ one has $\sigma_\theta^2 = 1-\theta_{|\bfX|} > 0$.
\par 





\subsection{Affine stochastic recurrence equation solution under Kesten's conditions}\hypertarget{sre}{}
In this section
we focus on the causal solution to the affine stochastic recurrence equation SRE under Kesten's conditions. To guarantee the existence of a solution $(\bfX_t)$, with values in $(\mathbb{R}^d, |\cdot|)$ as in \eqref{eq:sre:def} satisfying ${\bf RV}_\alpha$, we rely on Theorem 2.1. and Theorem 2.4 in \cite{basrak:davis:mikosch:2002}. For an overview, we refer to  \cite{buraczewski:damek:mikosch:2016}. In what follows, we study time series $(\bfX_t)$ as in the Example~\ref{example:sre} below. 
\begin{example}\label{example:sre}
Consider $(\bfX_t)$ to be a sequence with values in $\mathbb{R}^d$ satisfying
\beam\label{eq:sre:def}
 \bfX_t &=& \bfA_t \bfX_{t-1} + \bfB_t, \qquad t \in \mathbb{Z},
\eeam 
where $((\bfA_t,{\bfB}_t))$ is an iid sequence of non-negative random $d \times d$  matrices with generic element $\bfA$, and non-negative random vectors with generic element $\bfB$ taking values in $\mathbb{R}^d$. For the existence of a causal stationary solution, we assume 
\begin{itemize}
    \item[$i)$]   $\E[\log^{+}|\bfA|_{op}] + \E[\log^{+}|\bfB|]\;<\;\infty,$
    \item[$ii)$] under $i)$, assume the Lyapunov exponent of $(\bfA_t)$, denoted $\gamma$, satisfies
    \beao \gamma &:=& \lim_{n \to \infty} n^{-1} \log |\bfA_n \cdots \bfA_1|_{op}\; < \;0, \quad a.s. 
    \eeao
\end{itemize}
To guarantee the heavy-tailedness condition $\bf RV_\alpha$, we also assume
\begin{itemize}
    \item[$iii)$] $\bfB \not = \bfzero$ a.s.,  and $\bfA$ has no zero rows a.s.
    \item[$iv)$] there exists $\kappa > 0$ such that $\E[|\bfA|_{op}^\kappa] < 1$,
    \item[$v)$] the set $\Gamma$ from Equation \eqref{eq:Gamma} generates a dense group on $\mathbb{R}$,
    \beam\label{eq:Gamma} 
    \Gamma &=& \{\log|\bfa_n \cdots \bfa_1|_{op} : n \ge 1, \, \bfa_n \cdots \bfa_1 > 0,\nonumber \\
    && \quad \; \bfa_n,\dots, \bfa_1 \text{ are in the support of $\bfA$'s distribution } \},
    \eeam 
    \item[$vi)$] there exists $\kappa_1 > 0$ such that $\E[ (\min_{i=1,\dots,d} \sum_{t=1}^d A_{ij})^{\kappa_1} ] \ge d^{\kappa_1/2}$, and $\E[ |\bfA|_{op}^{k_1}\, \log^{+}|\bfA|_{op}] < \infty$.
    \item[$vii)$] under $i)-vi)$, there exists a unique $\alpha >0$ such that 
    \beam\label{eq:sre:alpha}
     \lim_{n \to  \infty} n^{-1} \log \E\big[ | \bfA_n \cdots \bfA_1 |_{op}^{\a}\big] &=& 0,
     \eeam 
     and $\E[|\bfB|^\alpha] < \infty$. If $d>1$ assume $\alpha$ is not an even integer.
\end{itemize}
The $\mathbb{R}^d-$variate series $(\bfX_t)$, satisfying \eqref{eq:sre:def} and $i)-vii)$, admits a causal stationary solution and satisfies ${\bf RV}_\alpha$, with $\alpha > 0$ as in Equation~\eqref{eq:sre:alpha}.
\end{example} 

The previous example is motivated by the seminal Kesten's paper \cite{kesten:1973}.  We follow Theorem 2.1. in \cite{basrak:davis:mikosch:2002} to state conditions $i)-ii)$ of Example~\ref{example:sre}. Under the conditions $i)-ii)$, the unique solution $(\bfX_t)$ of \eqref{eq:sre:def} has the a.s. causal representation
\beam\label{eq:sre:representation}
  \bfX_t &=& \sum_{i \ge 0} \bfA_{t-i+1} \dots \bfA_t \, {\bfB}_{t-i}, \qquad t \in \mathbb{Z},
\eeam 
where the first summand is ${\bfB}_t$ for $i = 0$ by convention; for an overview see \cite{buraczewski:damek:mikosch:2016}.

\par 
 One of the main reasons why the solutions to SRE as in Example~\ref{example:sre} have received strong interest, is because $(\bfX_t)$ satisfies ${\bf RV}_\alpha$ even when the innovations $((\bfA_t,\bfB_t))$ are light-tailed. This feature was first noticed in \cite{kesten:1973} where the original Kesten's assumptions were introduced. In Kesten's framework, a causal stationary solution to the SRE exists as in \eqref{eq:sre:representation}, and the extremes of the series occur due to the sums of infinitely many terms of growing length products appearing in \eqref{eq:sre:representation}; see \cite{bingham:goldie:teugels:1987} for a review. Further, the community adopted the simplified Kesten's conditions stated by Goldie in \cite{goldie:1991} for univariate SRE. These conditions also aim to capture the heavy-tailed feature under lighter-tailed innovations. In Example~\ref{example:sre}, we borrow the conditions $iii)-vii)$ established for the multivariate setting from Theorem 2.4 and Corollary 2.7. in \cite{basrak:davis:mikosch:2002}; see also \cite{basrak:davis:mikosch:2002:2}. Then, a solution $(\bfX_t)$ as in Example~\ref{example:sre} satisfies $\bf RV_\alpha$, for $\alpha >0$, and the index of regular variation $\alpha$ is the unique solution to the Equation~\eqref{eq:sre:alpha}. We are also  interested in Example~\ref{example:sre} because it models classic econometric time series such as the squared ARCH$(p)$, and the volatility of GARCH$(p,q)$ processes; see \cite{buraczewski:damek:mikosch:2016}.
 \par 
 Concerning the extremes of $(\bfX_t)$ in Example~\ref{example:sre}, the forward spectral tail process satisfies the relation
\beao 
 \bfTh_t &=& \bfA_t \cdots \bfA_1 \bfTh_0, \quad t \ge 0,
\eeao 
where $(\bfA_t)$ is an iid sequence distributed as $\bfA$; see \cite{janssen:segers:2014}. The backward spectral tail process has a cumbersome representation that we omit here; c.f.  \cite{janssen:segers:2014}. We state in Proposition~\ref{prop:sre:cs} sufficient conditions on $(\bfA, \bfB)$ yielding assumptions $\bf AC$, ${\bf CS}_p$ hold for $p > \alpha/2$, and a suitable sequence $(x_n)$ such that $n\P(|\bfX_1|> x_n) \to 0$ as $n \to \infty$. In this case the time series $(\bfX_t)$ admits an $\alpha-$cluster process $\bfQ$.
We recall the identity from Equation (8.6) of \cite{buritica:mikosch:wintenberger:2021}:
$c(p) = \E[\|\bfQ\|_p^\alpha] = \E[\|(\bfTh_t)_{t \ge 0}\|_p^\alpha -\|(\bfTh_t)_{t \ge 1}\|_p^\alpha]$, for $c(p)$ as in \eqref{eq:constant:cp:1:tcl}. Then, letting $p = \alpha/2$, a straightforward computation yields
\beao 
c(p) &\le& 2 \, \E[\|(\bfTh_t)_{t \ge 0}\|_p^{\alpha - p}] \;=\; 2\, \sum_{t \ge 0} \E[  | \bfA_t \cdots \bfA_1 \bfTh_0|^p] \\
&\le& 2\,s \sum_{t \ge 0} (\E[ | \bfA_s\cdots\bfA_1 |_{op}^p])^t,
\eeao 
and $\E[ |\bfA_s\cdots\bfA_1|_{op}^p] < 1$, for $p < \alpha$ and $s\ge 1$ fixed sufficiently large in the setting of Example~\ref{example:sre}. Hence, for $p \in (\alpha/2,\alpha)$,  $c(p) < \infty$ in \eqref{eq:constant:cp:1:tcl}, and then the series admits a $p-$cluster process $\bfQ^{(p)}$.

\par 
We state now Proposition~\ref{prop:sre:cs} which verifies conditions {\bf AC}, ${\bf CS}_p$ for the SRE equation. The proof is postponed to Section~\ref{sec:proof:sre:cs}.
\begin{proposition}\label{prop:sre:cs}
Let $(\bfX_t)$ be a stationary time series with values in $(\mathbb{R}^d,|\,\cdot\,|)$, as in Example~\ref{example:sre}. Let $p > \alpha/2$, and consider $(x_n)$ such that there exists $\kappa' > 0$ satisfying $n/x_n^{p\land (\alpha - \kappa')} \to 0$, as $n \to  \infty$. Then, $(x_n)$ satisfies conditions ${\bf AC}$ and ${\bf CS}_p$.
\end{proposition}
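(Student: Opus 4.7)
The plan is to reduce both claims to the general sufficient conditions recorded in Remarks~\ref{remark:ac:m0} and~\ref{remark:cs:m0}, which require only $n\P(|\bfX_1|>x_n)\to 0$ together with $\sum_{t\ge 1}\rho_t<\infty$. Once both are in hand, $\bf AC$ and ${\bf CS}_p$ follow immediately for every $p>\alpha/2$ under the stated scaling.

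For the tail level, since $|\bfX_1|$ is regularly varying of index $-\alpha$, Potter's bound yields $\P(|\bfX_1|>x)\le C\,x^{-(\alpha-\kappa')}$ for every $\kappa'\in(0,\alpha)$ and all $x$ large enough. The assumption $n/x_n^{p\land(\alpha-\kappa)}\to 0$ forces $x_n\to\infty$, and a short power-law manipulation with $\kappa'$ slightly smaller than $\kappa$ gives $n\,x_n^{-(\alpha-\kappa')}\to 0$, hence $n\P(|\bfX_1|>x_n)\to 0$.

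For the mixing, I would exploit the Markovian structure of the SRE. Under Kesten's conditions $i)$--$vii)$ of Example~\ref{example:sre}, pick $q\in(0,\alpha)$ such that $\E[|\bfA|_{op}^q]<1$, which exists by condition $iv)$ together with the characterisation \eqref{eq:sre:alpha} of $\alpha$; then the Foster--Lyapunov function $V(\bfx)=1+|\bfx|^q$ satisfies a geometric drift $\E[V(\bfX_1)\mid\bfX_0=\bfx]\le\gamma V(\bfx)+K$ with $\gamma<1$. Standard drift theory for Markov chains, as exposed in Chapter~2 of \cite{buraczewski:damek:mikosch:2016}, then yields geometric ergodicity, so $\beta_t\le C\varrho^t$ for some $\varrho\in(0,1)$. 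The induced $L^2$ spectral gap of the Markov semigroup upgrades this to $\rho_t\le C'(\varrho')^t$, so in particular $\sum_t\rho_t<\infty$. Combined with the first step this verifies the hypotheses of Remarks~\ref{remark:ac:m0}--\ref{remark:cs:m0} and completes the proof.

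The main obstacle is the last upgrade from geometric $\beta$-mixing (which is textbook for SRE under Kesten's conditions) to summable $\rho$-mixing, which is not a general implication but does hold here via the $L^2$ spectral gap implied by $V$-uniform ergodicity. An equally valid alternative, should one wish to avoid invoking a spectral-gap argument, is to rework Remarks 5.1--5.2 of \cite{buritica:mikosch:wintenberger:2021} directly with $\beta$-mixing in place of $\rho$-mixing, since the covariance estimates used there are controlled by either coefficient.
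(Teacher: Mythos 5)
Your first reduction is sound: $n/x_n^{p\land(\alpha-\kappa)}\to 0$ together with Potter's bound indeed gives $n\P(|\bfX_1|>x_n)\to 0$, and once $\sum_t\rho_t<\infty$ were established, Remarks~\ref{remark:ac:m0} and~\ref{remark:cs:m0} would hand you both $\bf AC$ and ${\bf CS}_p$. The genuine gap is the penultimate step: you claim to upgrade geometric $\beta$-mixing to geometric $\rho$-mixing via an ``$L^2$ spectral gap implied by $V$-uniform ergodicity''. For a strictly stationary Markov chain, geometric $\rho$-mixing is equivalent to a spectral gap of the transition operator on $L^2(\pi)$, and this is strictly stronger than $V$-uniform geometric ergodicity unless the chain is reversible: the Foster--Lyapunov drift produces a spectral gap on the weighted space $L^\infty_V$, not on $L^2(\pi)$, and the two are not comparable in general. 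Affine SRE chains as in Example~\ref{example:sre} have no reason to be reversible, so this implication is unavailable. The paper is explicit about this obstruction---Remark~\ref{remark:markov:beta} singles out $\beta$-mixing as the natural coefficient for Markov chains, and the remark preceding it warns that outside the $m_0$-dependent and Gaussian cases there is ``no general recipe for computing $\rho$-mixing rates''.

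Your fallback suggestion---rerun the covariance estimates with $\beta$-mixing in place of $\rho$-mixing---is the right instinct and is in the spirit of what the paper actually does, but it is not a drop-in replacement and you leave it undeveloped. The $\rho$-mixing covariance inequality costs only two $L^2$ norms, while the $\beta$-mixing inequality costs $2+\delta$ moments, and this loss is not harmless for the truncated-sum variance in ${\bf CS}_p$. The paper's proof of Proposition~\ref{prop:sre:cs} does not merely swap coefficients: after Chebyshev it splits the covariance sum at a level $\ell_n$, handles the near-diagonal block $t\le\ell_n$ by an SRE-specific coupling argument (introducing the independent-past copy $\bfX_t'=\Pi_t\bfX_0'+R_t$ of \eqref{eq:sre:prime}, Taylor-expanding $|\cdot|^p$, and exploiting $\E[|\Pi_t|_{op}^{\alpha-\delta}]<1$), and uses the geometric $\beta$-rate from Proposition~\ref{prop:mixing} only to control the far tail $t>\ell_n$. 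It also disposes of $\bf AC$ by citing Theorem~4.17 of \cite{mikosch:wintenberger:2013} rather than deducing it from a mixing summability condition. To repair your argument you would need to carry out a split of this kind; the spectral-gap shortcut does not go through.
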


In the setting of SRE equations, condition ${\bf AC}$ has been shown in Theorem 4.17 in \cite{mikosch:wintenberger:2013}. In \cite{mikosch:wintenberger:2013}, the authors already considered a condition similar to ${\bf CS}_p$. Parallel to their setting, we propose a proof of Proposition~\ref{prop:sre:cs} which shows ${\bf CS}_p$ holds over uniform regions $\Lambda_n = (x_n,\infty)$ such that $n/x_n^p \to 0$, as $n \to \infty$, in the sense of \eqref{eq:sre:unif}. Thereby, our proof extends Theorem 4.17 in \cite{mikosch:wintenberger:2013} to uniform regions $\Lambda_n$ not having an upper bound.

Concerning the mixing properties of $(\bfX_t)_{t\ge 0}$ as in Example~\ref{example:sre}, we use that it is a Markov chain and that $\bfX_0$ has the stationary distribution. As mentioned in Remark~\ref{remark:markov:beta}, we can then use Markov chain's theory to compute its mixing coefficients; cf. \cite{meyn:tweedie:1993}. We review Theorem 2.8. in \cite{basrak:davis:mikosch:2002}, yielding an exponential decay of the mixing-coefficients $(\beta_t)$ of the series.
For a general treatment see Chapter 4.2 in \cite{buraczewski:damek:mikosch:2016}.

\begin{proposition}\label{prop:mixing}
Consider a time series $(\bfX_t)$ with values in $(\mathbb{R}^d,|\cdot|)$, as in Example~\ref{example:sre}. Assume  there exists a Borel measure $\mu$ on $(\mathbb{R}^d,|\cdot|)$, such that the Markov chain $(\bfX_t)_{t\ge0}$ is $\mu-$irreducible, i.e., for all $C \subset \mathbb{R}^d$ with $\mu(C) > 0$,
    \beam\label{eq:irreducible} 
     \sum_{t = 0}^\infty  \P( \bfX_t \in C \,|\, \bfX_0 = \bfx) > 0, \quad \bfx \in \mathbb{R}.
    \eeam 
Then $(\bfX_t)$ has mixing coefficients $(\beta_t)$ satisfying $\beta_t = O(\rho^t)$ for some $\rho \in (0,1)$, and we say it is strongly mixing with geometric rate. Moreover, $(\bfX_t)_{t\ge 0}$ is irreducible with respect to the Lebesgue measure if $(\bfA, \bfB)$ admits a density.
\end{proposition}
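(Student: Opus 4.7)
The plan is to deduce geometric $\beta$-mixing from geometric ergodicity of the Markov chain $(\bfX_t)_{t\ge 0}$ via a Foster--Lyapunov drift argument, following the classical route of Meyn--Tweedie \cite{meyn:tweedie:1993}. Because $\bfX_0$ is taken under the stationary law, the strict-stationarity and the one-step transition kernel $P(\bfx,\cdot)=\mathcal{L}(\bfA\bfx+\bfB)$ determine the whole chain. First, I would pick the test function $V(\bfx)=1+|\bfx|^{\kappa}$ for the exponent $\kappa>0$ appearing in assumption $iv)$ of Example~\ref{example:sre}, namely $\E[|\bfA|_{op}^{\kappa}]<1$. A direct computation using the SRE recursion and the elementary bound $|\bfA\bfx+\bfB|^{\kappa}\le |\bfA|_{op}^{\kappa}|\bfx|^{\kappa}+|\bfB|^{\kappa}$ (for $\kappa\le 1$; for $\kappa>1$ use a convex-combination/Minkowski argument) gives
\begin{equation*}
\E[V(\bfX_1)\mid \bfX_0=\bfx]\;\le\;\E[|\bfA|_{op}^{\kappa}]\,V(\bfx)+\E[|\bfB|^{\kappa}]+1,
\end{equation*}
which after choosing a large enough sublevel set $C=\{V\le R\}$ yields a drift inequality $P V\le \lambda V+b\,\mathbf{1}_C$ with $\lambda\in(0,1)$ and $b<\infty$.

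The second step is to turn the $\mu$-irreducibility assumption into a proper small-set condition on $C$, so that the Meyn--Tweedie machinery applies. Since $(\bfX_t)_{t\ge 0}$ is $\mu$-irreducible there exists a maximal irreducibility measure $\psi$, and on any bounded set one can build an $n$-step minorization from the $\mu$-irreducibility combined with the compactness of $C$ and continuity properties of $P^n$ inherited from $(\bfA,\bfB)$; aperiodicity follows from the non-singular structure (e.g.\ the positive probability of returning in one step thanks to $\bfB\neq \bfzero$). Once $C$ is shown to be petite/small, Theorem~15.0.1 in \cite{meyn:tweedie:1993} gives $V$-uniform (hence geometric) ergodicity: there exist $\rho\in(0,1)$ and a finite constant $K$ with
\begin{equation*}
\sup_{|f|\le V}\bigl|P^{t}f(\bfx)-\pi(f)\bigr|\;\le\; K\,V(\bfx)\,\rho^{t},\qquad t\ge 0,
\end{equation*}
where $\pi$ is the stationary distribution.

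The third step is to translate $V$-geometric ergodicity into geometric $\beta$-mixing for the \emph{stationary} chain. This is a standard fact (see e.g.\ \cite{bradley:2005}, or Chapter~3 of \cite{dedecker:doukhan:lang:rafael:louhichi:prieur:2007}): for a stationary Harris chain,
\begin{equation*}
\beta_{t}\;=\;\int \|P^{t}(\bfx,\cdot)-\pi\|_{TV}\,\pi(d\bfx),
\end{equation*}
so integrating the $V$-geometric bound against $\pi$ (which is finite by $\pi(V)<\infty$, a consequence of the same drift condition, cf.\ Theorem~14.3.7 in \cite{meyn:tweedie:1993}) delivers $\beta_t=O(\rho^t)$. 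This is also precisely the conclusion of Theorem~2.8 in \cite{basrak:davis:mikosch:2002} and of Section~4.2 in \cite{buraczewski:damek:mikosch:2016}, on which we rely.

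Finally, for the claim that Lebesgue-irreducibility holds when $(\bfA,\bfB)$ admits a density, I would note that the conditional law of $\bfX_1=\bfA\bfx+\bfB$ given $\bfX_0=\bfx$ is then absolutely continuous with respect to Lebesgue measure on $\mathbb{R}^d$: by a change-of-variables argument in the joint density of $(\bfA,\bfB)$, the density of $\bfA\bfx+\bfB$ is strictly positive on an open subset of $\mathbb{R}^d$ for every $\bfx$. Consequently, for every Borel set $C$ of positive Lebesgue measure, $P(\bfx,C)>0$ for all $\bfx$ in some open set, and iterating gives the required positivity in \eqref{eq:irreducible}. I expect the main obstacle to be the clean verification of the small-set/petite-set property of the sublevel set $C$, since it needs $\mu$-irreducibility to propagate through an accessible $n$-step kernel; all other steps are by now textbook.
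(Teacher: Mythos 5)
Your proposal reconstructs the standard Meyn--Tweedie drift-condition argument, which is indeed what underlies the result the paper is invoking. The paper itself gives no self-contained proof of Proposition~\ref{prop:mixing}: it explicitly \emph{reviews} Theorem~2.8 of \cite{basrak:davis:mikosch:2002} (pointing also to Chapter~4.2 of \cite{buraczewski:damek:mikosch:2016}), so the "paper's own proof" is just a citation. Your sketch therefore spells out what the reference does rather than taking a different route. The drift computation with $V(\bfx)=1+|\bfx|^{\kappa}$ and $\E[|\bfA|_{op}^{\kappa}]<1$ is correct (one should just note that $\E[|\bfB|^{\kappa}]<\infty$ follows from $vii)$ by Jensen, and for $\kappa>1$ use a weighted convexity bound rather than plain subadditivity), and the passage from $V$-geometric ergodicity to $\beta_t=O(\rho^t)$ via $\beta_t=\int\|P^t(\bfx,\cdot)-\pi\|_{TV}\,\pi(d\bfx)$ is standard.

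The one place where the sketch is genuinely incomplete is the step you yourself flag: turning $\mu$-irreducibility of the sublevel set $C=\{V\le R\}$ into a small-set (or petite-set) property. This does \emph{not} follow from $\mu$-irreducibility alone; the references close this gap by appealing to additional structure --- typically the Feller/T-chain property of the SRE transition kernel (which holds once the transition kernel has a nontrivial continuous or absolutely continuous component, e.g.\ under the density assumption on $(\bfA,\bfB)$), together with Feigin--Tweedie-type results that compact sets of an irreducible Feller chain are petite. If you want a fully self-contained argument rather than deferring to \cite{basrak:davis:mikosch:2002}, that is the missing lemma to state and invoke explicitly. Also, in your last paragraph on Lebesgue-irreducibility, observe that under the non-negativity assumptions of Example~\ref{example:sre} the chain lives in the positive orthant, so the irreducibility measure is Lebesgue measure restricted to (a subset of) the orthant, and the "iterating gives the required positivity" step should be phrased as positivity of $\sum_t P^t(\bfx,C)$ on that support rather than all of $\mathbb{R}^d$.
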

We can now state the asymptotic normality of cluster-based estimator for SRE solutions in Theorem~\ref{thm:sre:clt} below. The proof is postponed to Section \ref{proof:sre:clt}.
\begin{theorem}\label{thm:sre:clt}
Consider $(\bfX_t)$ to be the causal solution to the SRE in \eqref{eq:sre:def} with values in $(\mathbb{R}^d, |\,\cdot\,|)$, as in Example~\ref{example:sre}.
Assume the conditions of Proposition~\ref{prop:mixing} hold. Consider $p > \alpha/2$, 
and sequence $(k_n)$ satisfying \eqref{eq:k:mo}, and
 $m_n/k_n \to \infty$, 
 and $(k_n^\prime)$ such that  
 $k/k^\prime \to 0$ as $\nto$.
  Consider
  $f_\alpha(p):\tilde{\ell}^p \to \mathbb{R}$, and assume $\bf L$, {\bf S} hold. 
Then, 
\beao 
\sqrt{k_n}\, \big( \, \widehat{f_{\widehat \a}^\bfQ}(p) - f_\a^\bfQ(p) \,\big) &\xrightarrow[]{d}& \mathcal{N}\big( \, 0, \Var(\, f_\a(Y \bfQ^{(p)})\,) \, \big), \quad n \to  \infty,
\eeao 
under the bias conditions $\bfB_\a(k)$, $\bfB(k^\prime)$, and the result extends to $\widehat \a-$cluster inference.
In particular, 
the $\widehat \a-$cluster based estimators from Section~\ref{sec:examples:cluster} in \eqref{eq:estimator:Ei} \eqref{eq:estimator:c1}, and \eqref{eq:cluster:sizes}, are asymptotically normally distributed. 
\end{theorem}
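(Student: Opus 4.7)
The plan is to verify the hypotheses of Theorem~\ref{thm:main} and conclude. The verification splits naturally into a regular-variation part (existence of the $p$-cluster process) and a mixing part (validity of $\bf MX$ together with one of $\bf MX_\beta$, $\bf MX_\rho$); everything else is either assumed in the statement or already proved.

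First, I would collect what the preceding sections provide. By Example~\ref{example:sre}, $(\bfX_t)$ satisfies $\bf RV_\alpha$. For $p>\alpha/2$, Proposition~\ref{prop:sre:cs} supplies $\bf AC$ and ${\bf CS}_p$ for any level sequence with $n/x_n^{p\land(\alpha-\kappa)}\to 0$. Combined with the estimate
\[
c(p)\;\le\;2\sum_{t\ge 0}\E\!\bigl[|\bfA_t\cdots\bfA_1|_{op}^{p}\bigr]\;<\;\infty
\]
sketched just before Proposition~\ref{prop:sre:cs}, Proposition~\ref{prop:existence:cluster:process} then guarantees existence of the $p$-cluster $\bfQ^{(p)}$. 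Conditions $\bf L$ and $\bf B$ are standing, so only the mixing hypotheses remain.

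Next, I would exploit the geometric $\beta$-mixing from Proposition~\ref{prop:mixing}: there exists $\rho\in(0,1)$ with $\beta_t = O(\rho^t)$. Setting $c:=-\log\rho>0$ and using $m_n\le n$, the hypothesis $\log n/b_n\to 0$ gives
\[
m_n\beta_{b_n}\;\le\;n\,e^{-c\,b_n}\;=\;e^{\log n-c\,b_n}\;\longrightarrow\;0,
\]
which is $\bf MX$. For $\bf MX_\beta$ I would take the auxiliary sequence $\ell_n:=\lfloor\sqrt{b_n\log n}\rfloor$, so that $\ell_n\to\infty$, $\ell_n/b_n=\sqrt{\log n/b_n}\to 0$, and $\ell_n/\log n\to\infty$. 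By \eqref{eq:k:mo} and Potter's bounds, $n/k_n$ (hence $m_n/k_n$) grows at most polynomially in $n$; therefore
\[
\frac{m_n\beta_{\ell_n}}{k_n}\;\lesssim\;\frac{n}{k_n}\,e^{-c\ell_n}\;\longrightarrow\;0
\]
super-polynomially. The series in \eqref{eq:covariance:beta:mixing} is dominated by its first term via the geometric bound $\beta_{tb_n}=O(e^{-ctb_n})$:
\[
\sum_{t\ge 1}\!\Bigl(\tfrac{m_n\beta_{tb_n}}{k_n}\Bigr)^{\!\tfrac{\delta}{2+\delta}}\;\lesssim\;\Bigl(\tfrac{m_n}{k_n}\Bigr)^{\!\tfrac{\delta}{2+\delta}}\exp\!\bigl(-c\,b_n\,\tfrac{\delta}{2+\delta}\bigr)\;\longrightarrow\;0,
\]
since $b_n\gg\log n$ dominates the polynomial growth of $m_n/k_n$. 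The bounded case is treated identically without the exponent $\delta/(2+\delta)$.

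With $\bf MX$ and $\bf MX_\beta$ in place, Theorem~\ref{thm:main} yields the claimed convergence. The concluding statement about the three cluster statistics \eqref{eq:estimator:Ei}, \eqref{eq:estimator:c1}, \eqref{eq:cluster:sizes} then follows from Corollaries~\ref{cor:ei}, \ref{cor:c1}, \ref{cor:cluster:lengths}, because the functionals involved are bounded shift-invariant elements of $\mathcal{G}_+(\tilde\ell^\alpha)$ and so automatically satisfy $\bf L$ with any $\delta>0$. The main obstacle is the calibration of $\ell_n$: it has to be small enough that $\ell_n=o(b_n)$ yet large enough that $e^{-c\ell_n}$ beats the polynomial blow-up of $n/k_n$ coming from \eqref{eq:k:mo}, which is exactly what forces the hypothesis $\log n/b_n\to 0$.
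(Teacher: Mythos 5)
Your proposal is correct and follows essentially the same route as the paper: verify existence of $\bfQ^{(p)}$ via Proposition~\ref{prop:sre:cs} and the $c(p)<\infty$ estimate, then use the geometric $\beta$-mixing from Proposition~\ref{prop:mixing} together with $\log n/b_n\to 0$ to check $\bf MX$ and $\bf MX_\beta$, and finally invoke Theorem~\ref{thm:main}. The only cosmetic difference is the choice of the auxiliary sequence, $\ell_n=\lfloor\sqrt{b_n\log n}\rfloor$ in your argument versus $\ell_n=b_n^{\eta}$ for some $\eta\in(0,1)$ in the paper; both satisfy $\ell_n\to\infty$, $\ell_n/b_n\to 0$, and make $e^{-c\ell_n}$ beat the polynomial (in $b_n$) growth of $m_n/k_n$, so either works.
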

\bre
In this example, the asymptotic variances of the $\a-$cluster based estimators from Section~\ref{sec:examples:cluster} in \eqref{eq:estimator:Ei} \eqref{eq:estimator:c1}, and \eqref{eq:cluster:sizes} are non-null. The limiting variances in Theorem \ref{thm:sre:clt} are difficult to compare with the existing ones in the literature because of the complexity of the distribution of $\bfQ^{(p)}$. However, we provide  simple $\ell^\a-$block estimators of the asymptotic variances in Section \ref{sec:numerical:experiments}.
\ere

\section{Numerical experiments}\label{sec:numerical:experiments}
This section aims to illustrate the finite-sample performance of the $\widehat{\alpha}-$cluster estimators on time series $(\bfX_t)$ with tail-index $\alpha>0$. In all the models we consider in Section~\ref{sec:examples}, we work under the assumption that the tuning parameters of the ${\alpha}-$cluster satisfy \eqref{eq:k:mo}. We take $\kappa^\prime = 1$ in \eqref{eq:k:mo} which yields $b = \sqrt{n/k}$. In this case, the implementation of our estimators can be written solely as a function of 
{
$k$ and $k^\prime$. Recall $k = k_n $ must satisfy $k \to \infty$,  $m/k \to \infty$ with $m = [n/b]$, $n/k^\prime  \to \infty$, and $k/k^\prime \to 0$ as $n\to \infty$.}
Numerical comparisons of our $\widehat \a-$cluster based approach with other existing estimators for the extremal index and the cluster index are at the advantage of our approach; see \cite{buritica:meyer:mikosch:wintenberger:2021} and \cite{buritica:mikosch:wintenberger:2021}. The code of all numerical experiments is available at: \url{https://github.com/GBuritica/cluster_functionals.git}.

\subsection{Tuning the Hill estimator}\label{subsec:rep:alpha}
 
 We recommend choosing the tuning sequence of the tail-index and of the cluster estimators as $(k^\prime_n)$, $(k_n)$, respectively, such that $k/k^\prime \to 0$. 
 Roughly speaking, the cluster statistics capture the block extremal behavior whereas the tail-index recovers an extremal property of margins.
 In this section, we illustrate that the $\alpha$ cluster-based estimators perform well in simulation  when we use the Hill estimator $1/\widehat{\alpha}(k^\prime)$ with $k^\prime$ larger than $k$.
 To illustrate this point, we simulate $500$ samples $(X_t)_{t=1,\dots,n}$ of an AR$(\varphi)$ model with absolute value student$(\alpha)$ noise for $n = 12\,000$, $\alpha = 1$ and $\varphi  \in \{ 0.5, 0.7\}$, and for samples of Example~\ref{example:sre}. We estimate the extremal index $\widehat{\theta}_X(k)$ as in \eqref{eq:estimator:Ei} where we replace $\widehat \alpha$ by $\widehat{\alpha}(k^\prime)$. Recall that for an AR$(\varphi)$ model the asymptotic variances of the extremal index estimator are asymptotically null when $k/k^\prime \to 0$ as $n\to \infty$.  We see in Figures~\ref{fig:0.78000}, \ref{fig:0.58000} and \ref{fig:sre:k1k2} that in practice we have to choose $k$ small to reduce the bias of the estimator.  Moreover, the estimation procedure is robust with respect to $k^\prime$ therefore we recommend taking $k^\prime$ large to reduce variance. Similar results were found for $n = 3\,000$, $n=5\,000$, and $n = 8\, 000$ and these are available upon request. 
To conclude, we see in  Figures~\ref{fig:0.78000}, \ref{fig:0.58000} and \ref{fig:sre:k1k2} that standard deviations are small, and thus the error of cluster inference is mainly due to bias. 
For this reason, we recommend choosing $k$ small and $k^\prime$ larger in all settings.

\begin{figure}
     \centering
    \begin{subfigure}[b]{0.45\textwidth}
         \centering
    \includegraphics[width=\textwidth]{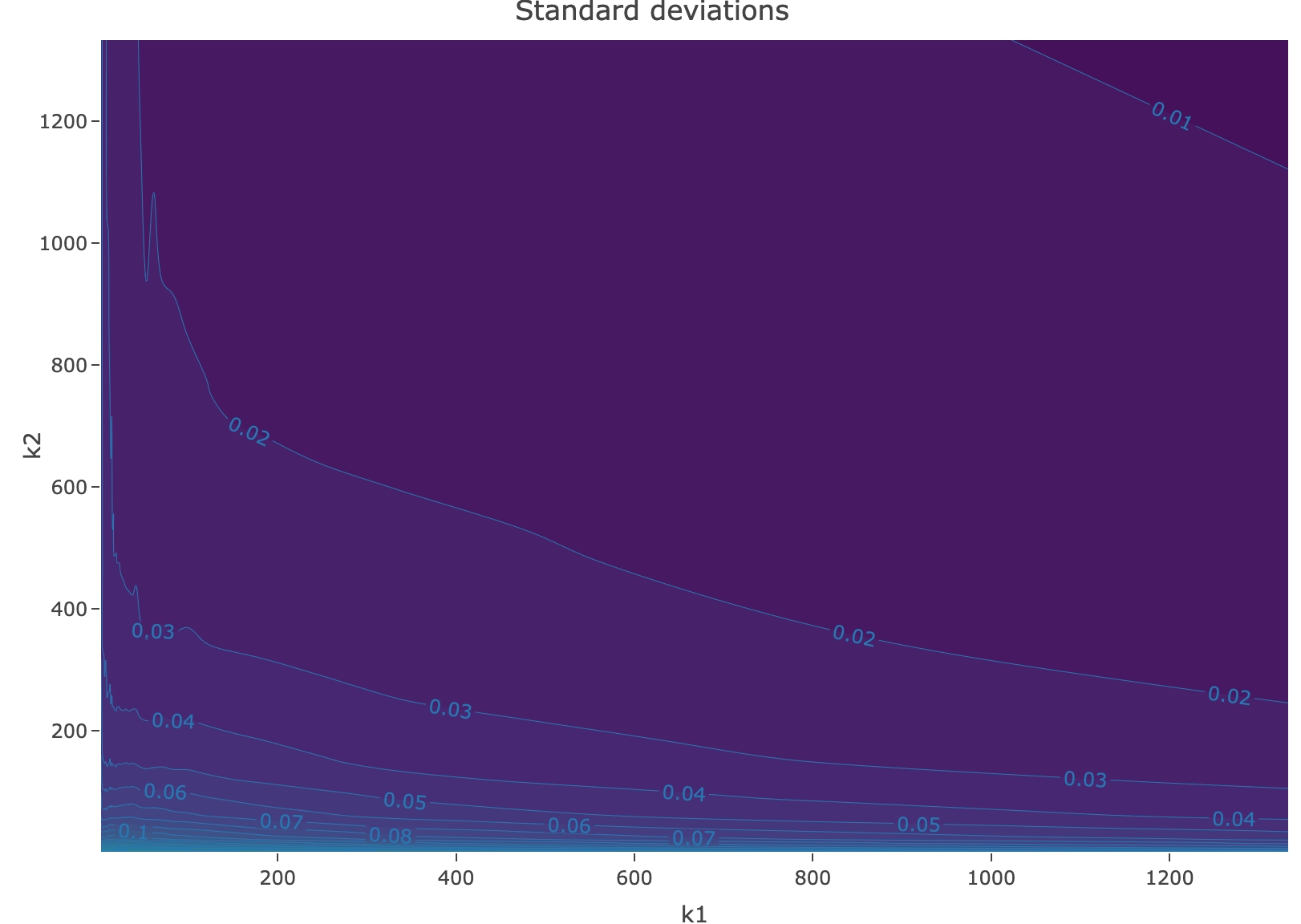}
     \end{subfigure}
     \hfill
     \begin{subfigure}[b]{0.43\textwidth}
     \includegraphics[width=\textwidth]{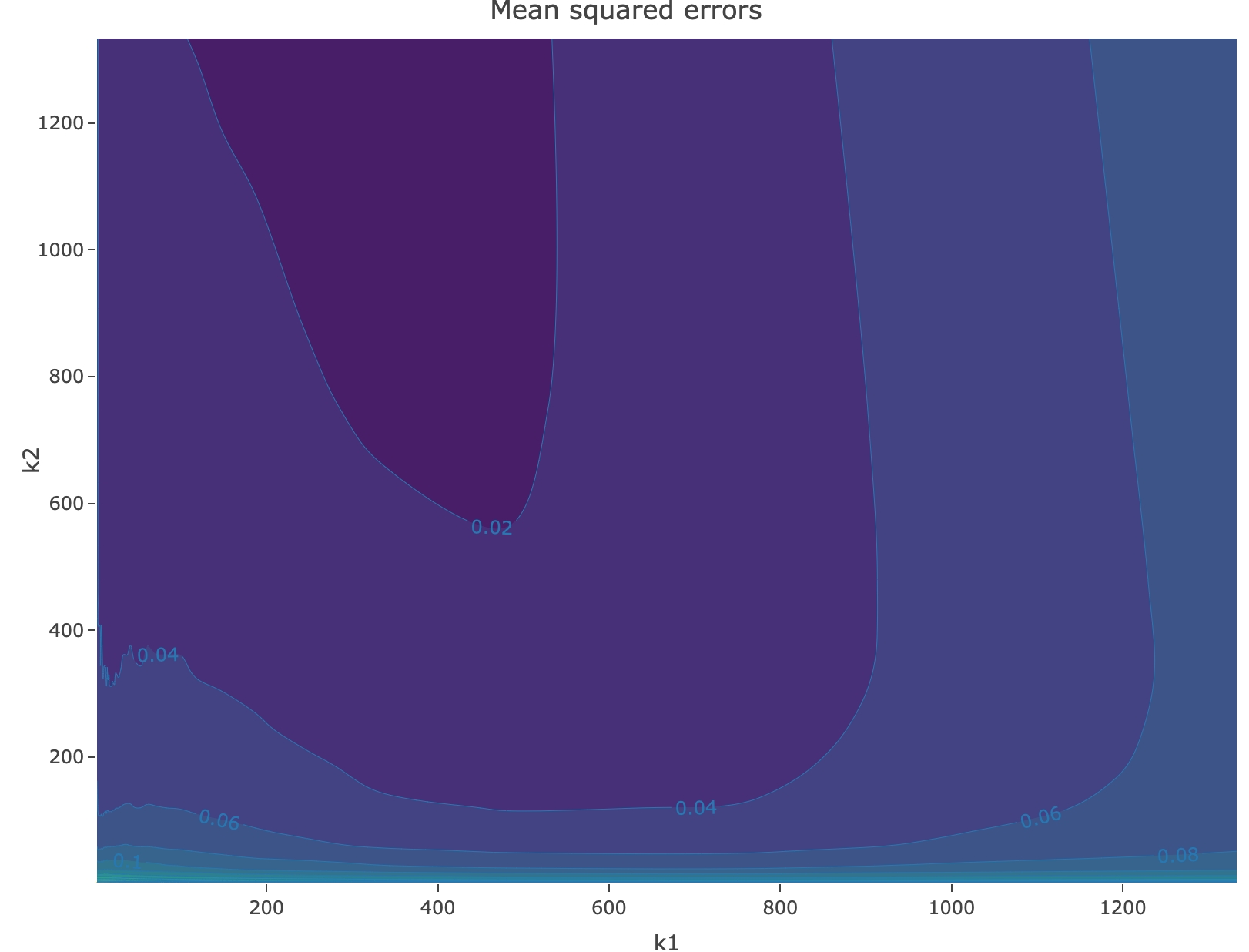}
     \end{subfigure}
     \caption{ Heatmap with contour curves of standard deviations and mean squared errors for estimates of the extremal index $ k1 = k \mapsto \widehat{\theta}_{X}(k)$ in \eqref{eq:estimator:Ei}, using the {\blue Hill} estimator $k2 = k^\prime \mapsto \widehat{\alpha}(k^\prime)$. 
     We simulate $500$ samples $(X_t)_{t=1,\dots,n}$ of an AR$(\varphi)$ model with absolute value student$(\alpha)$ noise for $n = 12\,000$, $\varphi = 0.5$, $\alpha = 1$, such that $\theta_{X} = 0.5$.  
		 }
     \label{fig:0.78000} 
\end{figure}

\begin{figure}
     \begin{subfigure}[b]{0.45\textwidth}
         \centering
    \includegraphics[width=\textwidth]{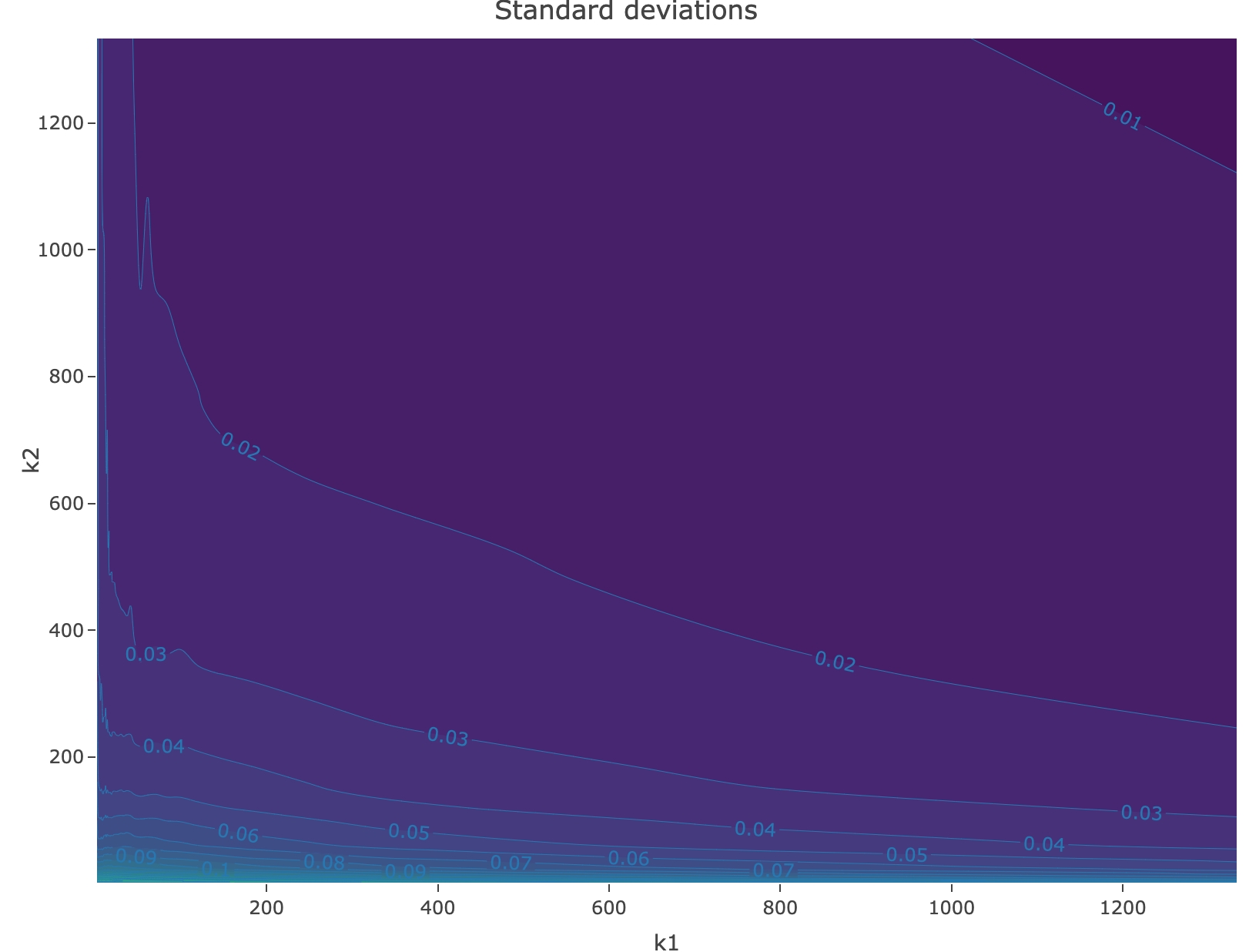}
    \end{subfigure}
     \hfill
     \begin{subfigure}[b]{0.44\textwidth}
         \centering
  \includegraphics[width=\textwidth]{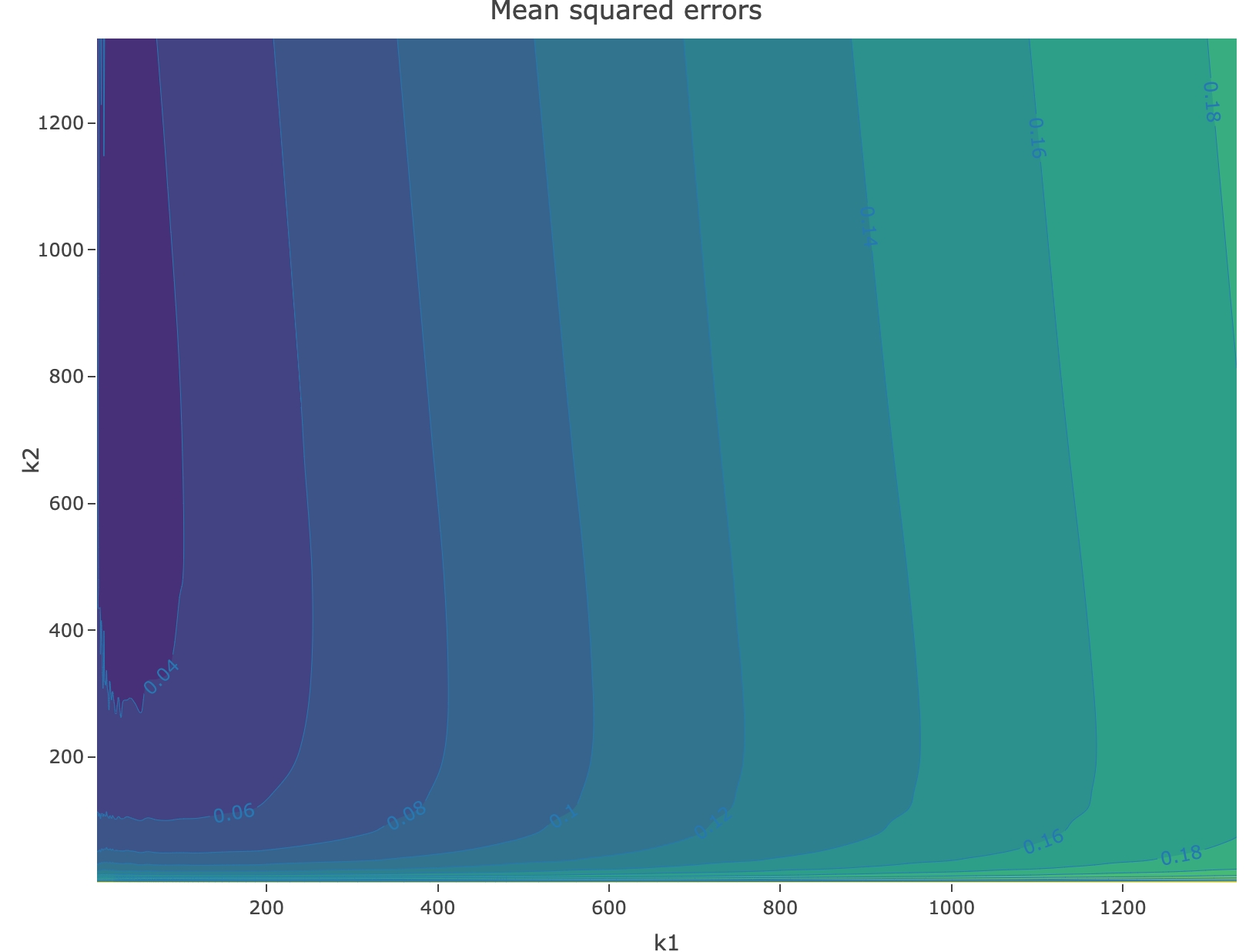}
      \end{subfigure}
      \caption{Heatmap with contour curves as in Figure~\ref{fig:0.78000}. Here we simulate $500$ samples $(X_t)_{t=1,\dots,n}$ of an AR$(\varphi)$ model with absolute value student$(\alpha)$ noise for $n = 12\,000$, $\varphi = 0.7$, $\alpha = 1$, such that $\theta_{X} = 0.3$.  }
     \label{fig:0.58000}
\end{figure}     

\begin{figure}
     \begin{subfigure}[b]{0.45\textwidth}
         \centering
    \includegraphics[width=\textwidth]{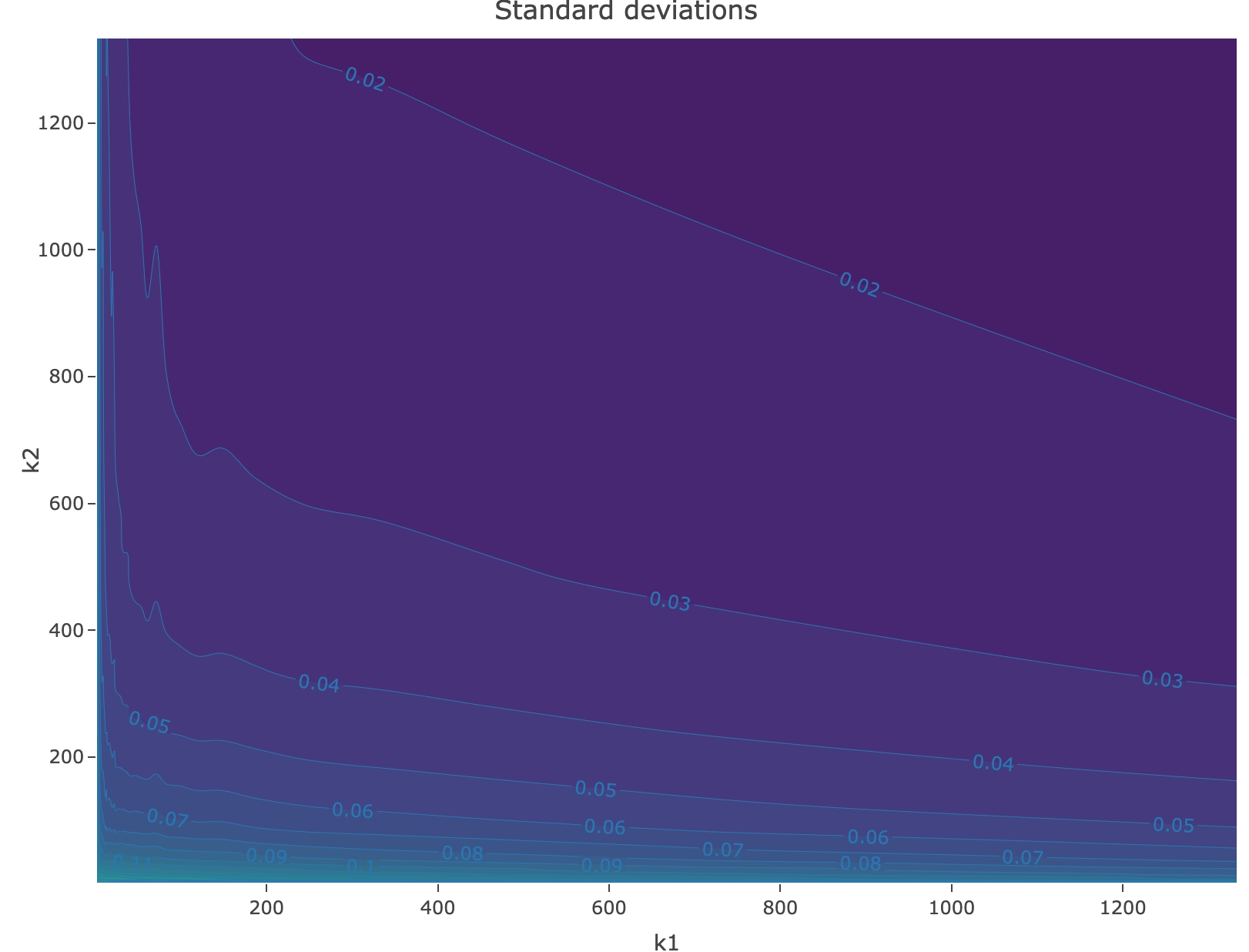}
    \end{subfigure}
     \hfill
     \begin{subfigure}[b]{0.44\textwidth}
         \centering
     \includegraphics[width=\textwidth]{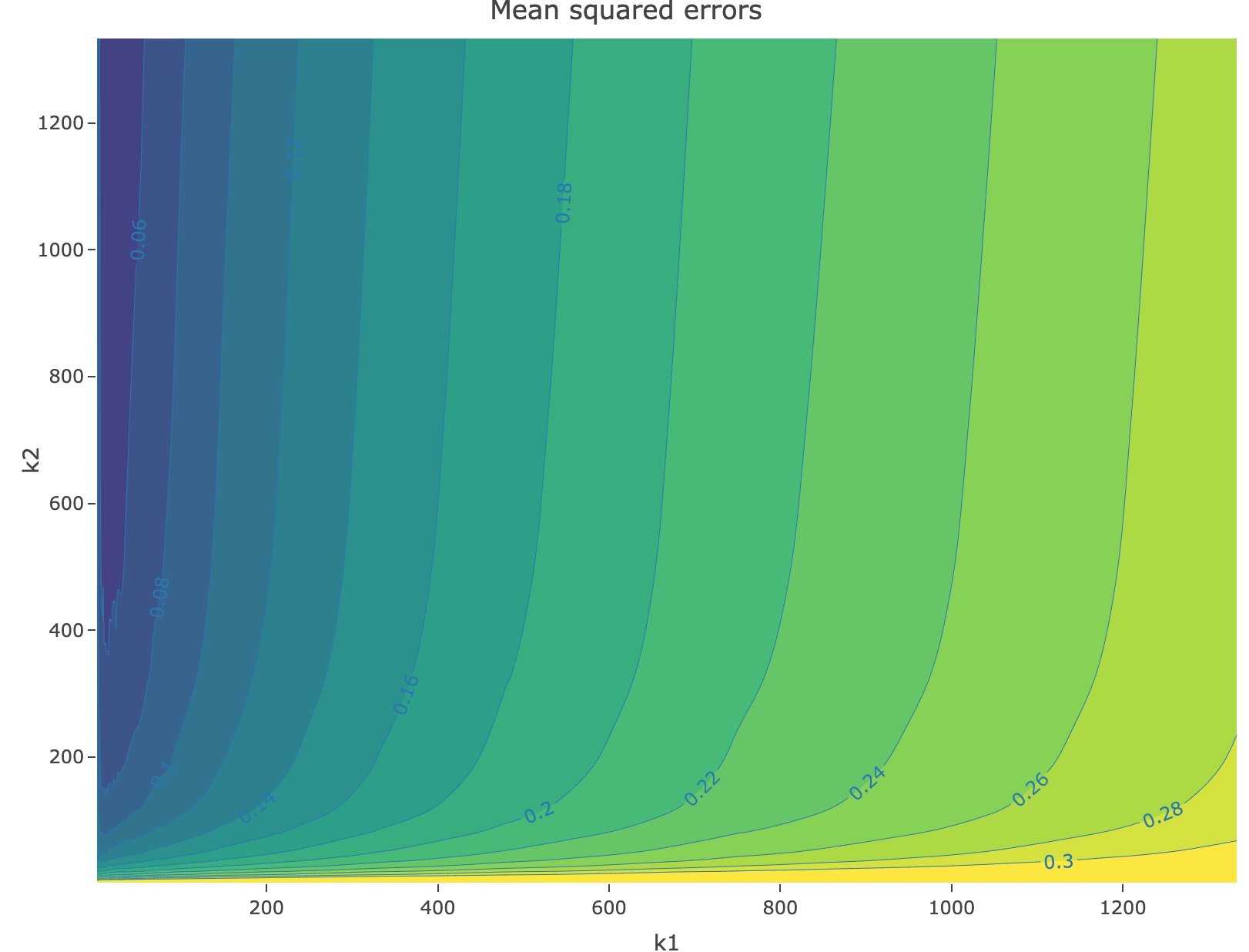}
     \end{subfigure}
      \caption{Heatmap with contour curves as in Figure~\ref{fig:0.78000}. Here we simulate $500$ samples $(X_t)_{t=1,\dots,n}$ of Example~\eqref{ex:sre} for $n = 12\,000$ such that $\theta_{X} \approx 0.2792$.  }
     \label{fig:sre:k1k2}
\end{figure}

\subsection{Cluster size probabilities} We reviewed in Section~\ref{example:cluster:sizes} how cluster sizes play a key role to model the serial behavior of exceedances. In this section, we implement the cluster size probabilites estimation procedure from Equation~\eqref{eq:cluster:sizes} in an example of a solution to the SRE under Kesten's conditions.

\begin{example}\label{ex:sre}
Consider the non-negative univariate random variables $A$, $B$, defined by $\log A = N - 0.5$, where $N$ denotes a standard Gaussian random variable, and $B$ is uniformly distributed in $[0,1]$. Let $(X_t)$ be the solution to the SRE in \eqref{eq:sre:def}. Then, $(X_t)$ satisfies ${\bf RV}_\alpha$ with $\alpha = 1$. If $(A_j)$ is a sequence of iid random variables with generic element $A$, then 
\beao 
Q_t &\eqd&  
\Pi_t/\|(\Pi_j)\|_{\alpha}\,,\qquad t\in \Z\,,
\eeao 
with 
\beao 
\Pi_t &\eqd& \begin{cases}
A_t \cdots A_1 & \text{ if }\quad t \ge 1, \\
A_{t} \cdots A_{-1}  & \text{ if } \quad t \le -1,\\
1& \text{ if } \quad t=0\,.
\end{cases}
\eeao 
This follows by Example 6.1 in \cite{janssen:segers:2014}, and Proposition 3.1 in \cite{buritica:mikosch:wintenberger:2021}. Then, for $p > \alpha/2$, the $p-$cluster based estimators \eqref{eq:estimator:Ei} \eqref{eq:estimator:c1}, and \eqref{eq:cluster:sizes} are asymptotically normally distributed. 
\end{example}

Recall the cluster sizes  $\pi_1,$ $\pi_2,$ $\dots $,  defined in \eqref{eq:xmas25c}. We infer the cluster sizes of Example~\ref{ex:sre} using $ \a-$cluster estimates. 
To illustrate Theorem~\ref{thm:sre:clt}, we run a Monte--Carlo simulation experiment based on $1\,000$ samples $(X_t)_{t=1,\dots, n}$ of length $n = 12\,000$ from Example~\ref{ex:sre}. For each sampled trajectory,  we obtain estimates $\widehat{\pi}_1,\widehat{\pi}_2,\dots$, letting $k = 10$ and $b = 38$ in \eqref{eq:cluster:sizes}. 
For the implementation we use Hill estimates of the tail-index $\widehat \a(k^\prime)$ with $k^\prime = 1\,000$.  We also estimate the extremal index $\theta_X$ of the series from Equation~\eqref{eq:estimator:Ei}. Theorem~\ref{thm:sre:clt} yields, for $j \ge 0$,
\beam\label{eq:asymptotic:variance:pi} \Var(\sqrt{k_n}\, (\widehat{\pi}_j - \pi^\bfQ_j)) &\to& \Var(\pi_j^\bfQ( \bfQ)), \quad n \to \infty,
\eeam 
where $\pi_j^\bfQ$ are the cluster functional yielding the cluster sizes $\pi_j$ with the notation in \eqref{eq:clt:cluster}. Notice that the asymptotic variance of our cluster sizes estimate is again a cluster statistic that we can infer.
We compute an estimate of the asymptotic variance in \eqref{eq:asymptotic:variance:pi}  using cluster-based estimates, and compare this estimate with the empirical variance obtained from the Monte-Carlo simulation study. 
Figure~\ref{fig:hist} plots the profile of the limit Gaussian distribution where the asymptotic variance is computed in these two ways. As expected from Equation~\eqref{eq:asymptotic:variance:pi}, the curves overlap, even if $k$ is small. 
In our simulation, a clear bias appears when we choose $k$ larger.
\par 
In the case of SRE equations, the cluster sizes were studied in detail in \cite{haan:resnick:rootzen:vries:1989}. The authors proposed a method to approximate the true values when the tail-index $\alpha$, and the random variable $A$ are known. We approximate true values using Equation 3.5 in  \cite{haan:resnick:rootzen:vries:1989}, and a Monte-Carlo study with $10\,000$ samples of length $500\,000$. The obtained values are pointed out in red in Figure~\ref{fig:hist}. We see that this choice of $k$ yields estimates centered around the true value. 


\begin{figure}
     \centering
    \begin{subfigure}[b]{0.49\textwidth}
         \centering
    \includegraphics[width=\textwidth]{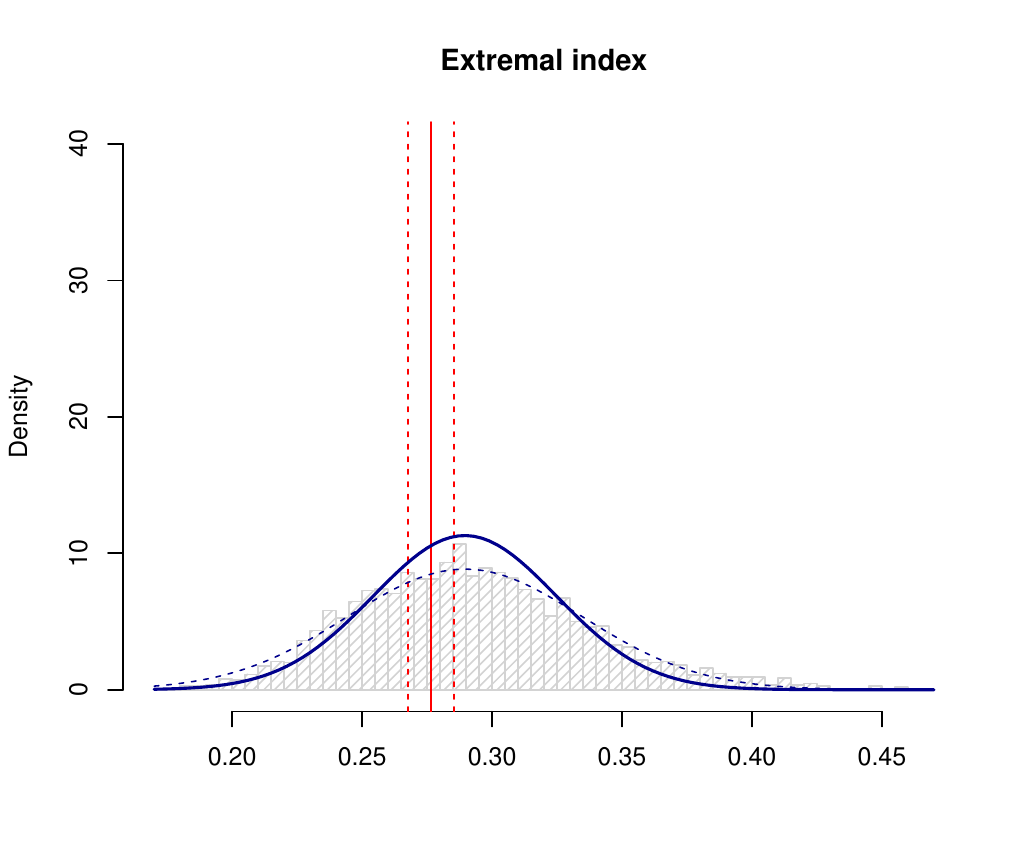}
     \end{subfigure}
     \hfill
     \begin{subfigure}[b]{0.49\textwidth}
     \includegraphics[width=\textwidth]{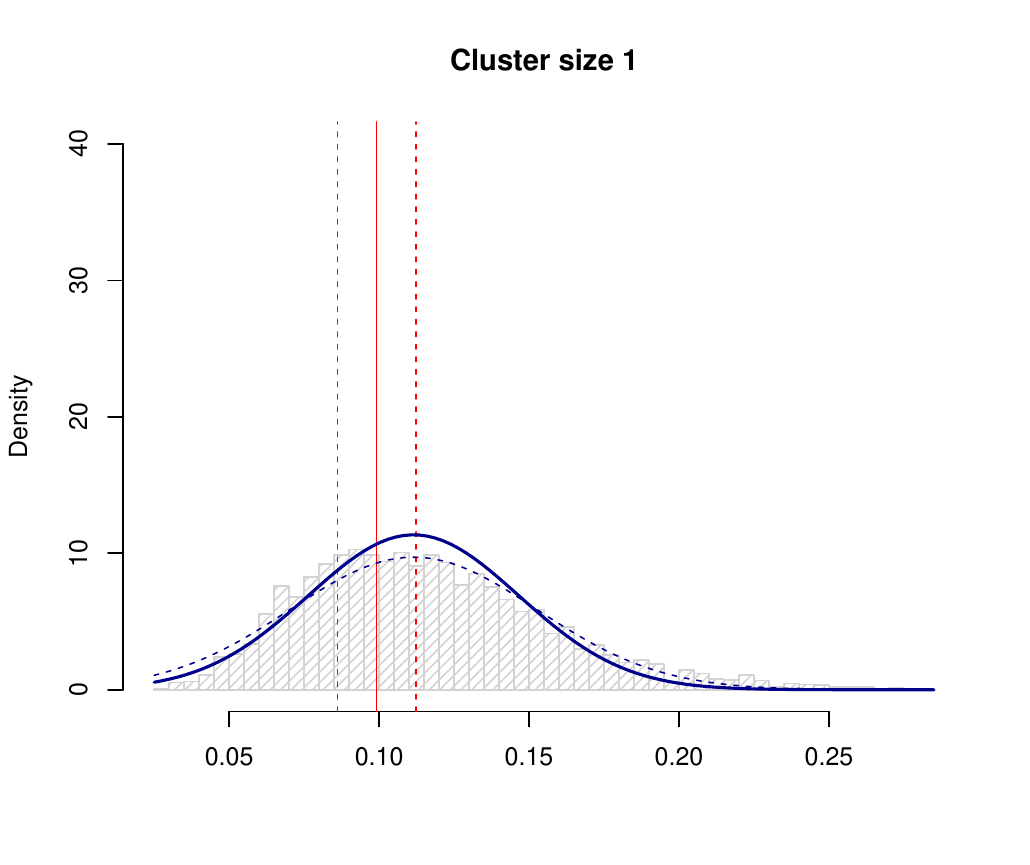}
     \end{subfigure}
     \begin{subfigure}[b]{0.49\textwidth}
        \centering
    \includegraphics[width=\textwidth]{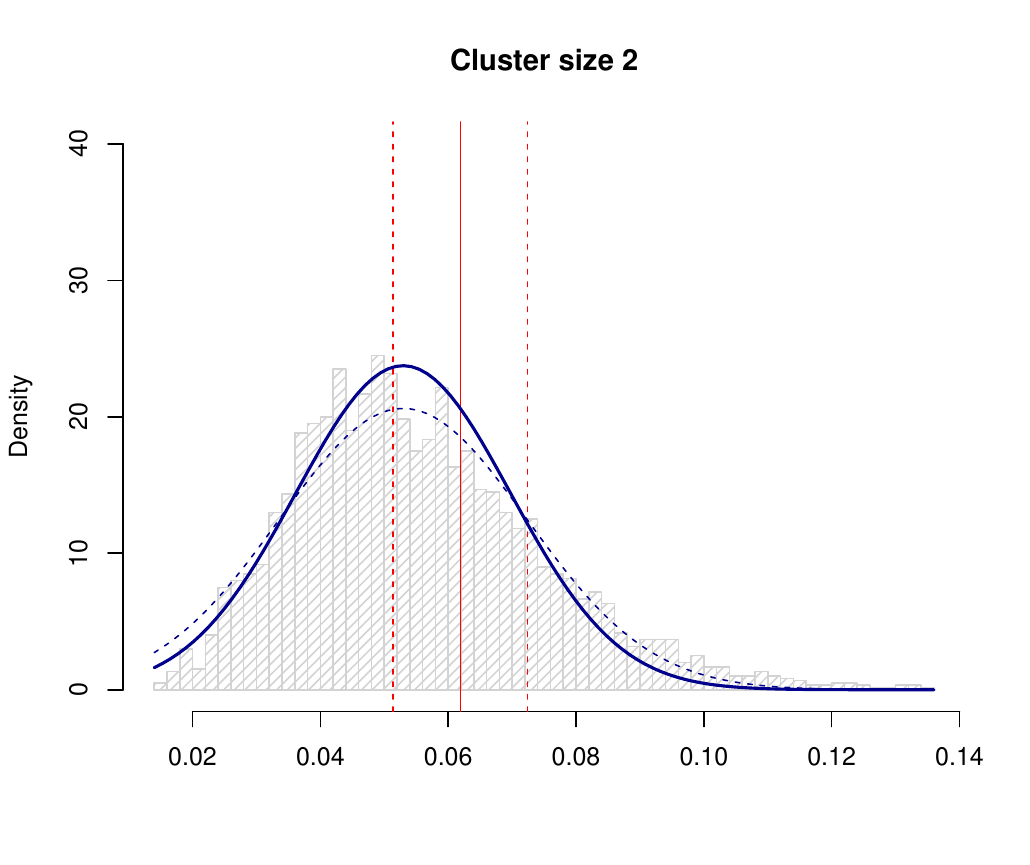}
     \end{subfigure}
     \hfill
     \begin{subfigure}[b]{0.49\textwidth}
     \includegraphics[width=\textwidth]{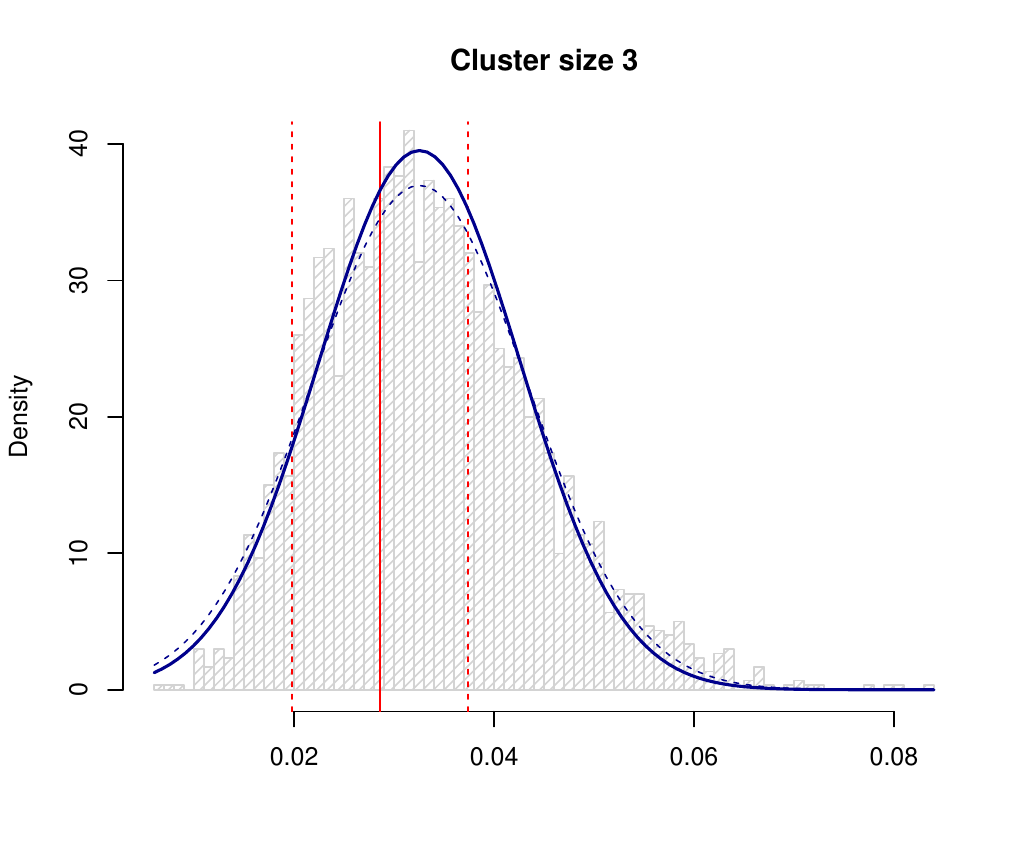}
     \end{subfigure}
     \caption{ Histogram of estimates $\widehat{\theta}_X$ of the extremal index  using \eqref{eq:estimator:Ei}, and the cluster size probability $ \widehat{\pi}_1$, $ \widehat{\pi}_2$, $ \widehat{\pi}_3$, using \eqref{eq:xmas25c}. We simulate $1\,000$ samples $(X_t)_{t=1,\dots,n}$ of Example~\ref{ex:sre} with $n = 12\,000$. The Gaussian density curves are centered in the median of the estimators. Their variances are  estimated by Monte-Carlo (dotted curve) or using the average of the cluster-based estimate of the asymptotic variance defined in \eqref{eq:asymptotic:variance:pi} (solid curve). The red lines point to the Monte-Carlo  approximation of the real values with standard deviation. These were computed using Equation 3.5 in  \cite{haan:resnick:rootzen:vries:1989}, and a simulation study with $10\,000$ samples of length $500\,000$.
	 }
     \label{fig:hist} 
\end{figure}

\subsection{Conclusion}
 Our main theoretical result in Theorem~\ref{thm:main} states asymptotic normality of $\a$ cluster-based disjoint block estimators $\widehat{f}_\alpha^\bfQ(\widehat \alpha)$, based on $k$ extremal $\ell^{\widehat \a}-$blocks, where $\widehat \alpha$ is an estimate of the tail index of the series. 
 The advantage of $\widehat \a$ cluster-based methods is that the choice of $k$ is robust to time dependencies and that it fully describes clusters of extreme values; see \cite{buritica:mikosch:wintenberger:2021}.
Equation~\eqref{eq:central:limit:1} characterizes their asymptotic variance in terms of a cluster statistic that we can also infer. 
We further show in Section~\ref{sec:examples:cluster} that many important indices in extremes can be written in terms of an  $\a-$cluster statistic, e.g., the extremal index and cluster sizes. 
Section~\ref{sec:examples} verifies that our assumptions hold for numerous models like causal linear models and SRE solutions under Kesten's conditions. For linear models, we obtain estimators with null asymptotic variance for classical indices as first conjectured in \cite{hsing:1993}. In the examples we considered, our estimators have a small variance that can also be estimated. 
To illustrate the performance of our $\a-$cluster inference methodology, we run finite-sample simulations in Section~\ref{sec:numerical:experiments}. 
Our simulations support that replacing $\alpha$ by $\widehat{\alpha}(k^\prime)$ as in Section~\ref{subsec:rep:alpha} does not have a big impact on the asymptotic variance.
 This is because in practice $k$ needs to be chosen small to obtain unbiased estimates, whereas $k^\prime$ can be chosen larger. 
Then, even if we choose $k$ small, the uncertainty of our procedure is well quantified by plugging an estimate of the asymptotic variance in the Gaussian limit. 
{
Finally, a complete study of the tuning parameters $k, k^\prime$ requires a careful analysis of the bias conditions for blocks: ${\bf B}_\a, {\bf B}$, as we pointed out in Remark~\ref{remark:bias}, which we see as a road for future research. 
}



\appendix



\section{Consistency of block estimators}\label{sec:consistency}
Let $p > 0$. In the following we assume the conditions of Proposition~\ref{prop:existence:cluster:process} hold. In this setting, the time series $(\bfX_t)$ admits a $p-$cluster $\bfQ^{(p)} \in \tilde{\ell}^p$ and \eqref{eq:constant:cp:1:tcl}, \eqref{limit:cluster:process} hold for $(x_n)$.
For inference purposes, we fix 
the sequence of block lengths $(b_n)$ and write $m_n = \lfloor n/b_n \rfloor$, such that $b_n \to \infty$, $m_n \to  \infty$.
 We assume 
\beam 
k \quad:=\quad  k_n(p) &=& \;\big\lfloor m_n\P(\|\mathcal{B}_1\|_p >  x_{b_n})\big\rfloor\label{eq:k:3} \\
&\sim& n\, c(p) \P(|\bfX_1| > x_{b_n}), \qquad n \to  \infty,\label{eq:k:2}
\eeam
where {the last relation can be concluded from Equation~\eqref{eq:constant:cp:1:tcl} and $c(p) \in (0,\infty)$ is as defined therein}.
We can verify $m_n/k_n \to \infty$ using the relation $n \P(|\bfX_1|>x_{b_n})\to 0$, as $n\to\infty$.
\par 
To infer the tail-index of the series $\alpha$ we implement the Hill estimator in \eqref{eq:hill:estimator}, hence we consider sequences $(x^\prime_n)$ and $(k_n^\prime)$ such that 
$(x_n^\prime)$  verifies \eqref{eq:constant:cp:1:tcl} for $p = \infty$ and 
\beam
k^\prime \; := \; k_n^\prime &=& \lfloor  n \P(|\bfX_1| > x^\prime_{b_n})\rfloor , \label{eq:k:app}
\eeam
holds.
Similarly as before, we can verify $m_n/k_n^\prime \to \infty$, as $n \to \infty$. 
The sequences $(b_n)$, $(m_n)$, $(x_n)$, $(k_n)$, and  $(x_n^\prime),$ $(k_n^\prime)$ that we consider henceforth are the ones fixed here.

\subsection{Proof of Lemma~\ref{lem:con}}


We start by showing the conditions in Lemma~\ref{lem:con} 
 entail \eqref{mixing:condition2} below. 
The proof of the next lemma is provided  in Appendix \ref{proof:lemma:mixing}.

\begin{lemma}\label{lem:mixing}
Let $p\in(0,\infty]$, 
let $f = f(p) \in \mathcal{G}_{+}(\tilde{\ell}^p)$ be a bounded Lipschitz continuous function. 
Assume  there exists a sequence $(\ell_n)$, satisfying $\ell_n \to  \infty$, as $n \to  \infty$,  such that the mixing coefficients $(\beta_t)$ satisfy
    \beam\label{eq:cond:mixing} 
        \lim_{n \to  \infty } m_n \beta_{\ell_n}/k_n & = & 
        \lim_{n \to  \infty} \ell_n/b_n \quad = \quad   0\,,
    \eeam 
    then the sequences $(x_{n})$ and $(b_n)$ satisfy, for all $u >0$, 
\begin{align}\label{mixing:condition2}
 \big| 
    \E \big[e^{ 
     - k_n^{-1} \sum_{t=1}^{m_n} f((ux_{b_n})^{-1} {\mathcal B}_t)\1(\|\mathcal B_t\|_p > u x_{b_n} ) }\big]
    - \E\big[ &e^{ 
    - k_n^{-1}f((ux_{b_n})^{-1} {\mathcal B}_1)\1(\|\mathcal B_1\|_p > u x_{b_n}) }
    \big]^{m_n}
    \big]
    \big|\nonumber\\ &\to 0, \qquad n \to \infty.
\end{align}
\end{lemma}

Next, we state the consistency of $\ell^p-$block estimators for arbitrary $p$, and we cover the case of  $\ell^{\widehat \a}-$block estimators.  
The proof of the next Lemma is deferred to Appendix~\ref{proof:lem:consis:complete}.
\begin{lemma}\label{pro:cor:consis}
Assume the mixing condition in \eqref{mixing:condition2} holds. 
Consider a function $f = f_\a(p) \in \mathcal{G}_+(\tilde{\ell}^p)$, not necessarily bounded,
 and assume $\E[f(Y\bfQ^{(p)})] < \infty$. 
 Moreover, assume
\beam \label{eq:cond:consistency:unbounded}
\limsup_{n \to \infty}\frac{
\E\big[\big(f(\bfX_{[1,n]}/x_{n})\big)^{1+\delta}\1(\|\bfX_{[1,n]}\|_{p} > x_{n}) \big]}
{\P(\|\bfX_{[1,n]}\|_p > x_{n})} < \infty.
\eeam 
Then, the $\ell^p-$block estimator in \eqref{eq:estimator:cluster} satisfies 
\beao 
\widehat{f^\bfQ}(p)  &\xrightarrow[]{\P}& f^{\bfQ}(p), \quad n \to  \infty.
\eeao 
Let $f = f_\a(p) :\tilde \ell^{p} \to \mathbb{R}$
and assume ${
\bf S}$ holds, and $f_\a(p)$, and $
\partial f_q/\partial q|_{q = 
\alpha}(p)$ satisfy {\bf C}, 
for fixed $\epsilon,\delta > 0$. Assume also $\widehat \alpha_n \xrightarrow[]{\P} \alpha$, 
as $n \to \infty$.  Then, 
\beao 
\widehat{f^\bfQ_{\widehat \a}}(p)  &\xrightarrow[]{\P}& f_{
\alpha
}^{\bfQ}(p), \quad n \to  \infty.
\eeao 
Furthermore, if $f_
\alpha(\alpha), 1(\a)$, and $
\partial f_q/\partial q|_{q = 
\alpha}(\alpha)$ satisfy ${
\bf C}$ for $p = 
\alpha$, then $\widehat{f_{\widehat \a}^\bfQ}(\widehat \a) \xrightarrow[]{\P} f_\a^{\bfQ}(\alpha)$, as $n \to \infty$.
\par
\end{lemma}

\begin{proof}[Proof of Lemma~\ref{lem:con}]
The proof consists in applying  Lemma~\ref{pro:cor:consis}. 
Notice Lemma~\ref{lem:con} already assumes ${\bf C}$ for fixed $\epsilon,\delta > 0$ and it assumes $c(q) < \infty$, for $q \in [\a-\epsilon, \a+\epsilon]$.
In addition, Lemma~\ref{lem:mixing} entails condition~\eqref{mixing:condition2} which we require to apply Lemma~\ref{pro:cor:consis}.
This verifies all the assumptions of  Lemma~\ref{pro:cor:consis} and thus this concludes the proof.
\end{proof}

\section{Asymptotic normality of block estimators}\label{sec:asymptotic:normality}
\subsection{Notation}
We focus on the case $p = \a$, $\a>0$. 
The general result on $p-$cluster inference, for $p \in (0, \infty]$, follows the same lines of this proof fixing $q=p$ thus we omit the details.
Let $f_\a \in \mathcal{G}_+(\tilde{\ell}^\a)$ be a cluster functional and let $f_\a ^\bfQ$ be its  $\a$-cluster statistic
\beam \label{eq:clust:statistic}
 f_\a^{\bfQ} &=& \E[f_\a(Y \bfQ)].
\eeam 
Let $\mathcal{F}_f \subseteq \mathcal{G}_+(\tilde{\ell}^\a)$ be the set including the functions: $\bfx \mapsto f_\a(\bfx)$, $\bfx \mapsto 1(\bfx)$,  and $\mathcal{F}_h \subseteq \mathcal{G}_+(\tilde{\ell}^\infty)$ the set containing
  $\bfx \mapsto h(\bfx)$ and 
$\bfx\mapsto e(\bfx)$ as in \eqref{eq:func:hill}, \eqref{eq:func:exceedances}, and define 
\beam \label{eq:F}
\mathcal F &=& \mathcal F_f \cup \mathcal F_h.
\eeam 
\par 
For inference purposes, recall we denote disjoint blocks as 
\beao 
  \mathcal{B}_t \quad :=  \quad \bfX_{(t-1){b_n} +[1,b_n]}, \qquad t=1, \dots, m_n.\,
\eeao 
In the following sections it will be useful to consider the deterministic threshold estimators defined by 
\beam\label{eq:oracle:cluster}
 \widetilde g^{\bfQ}(u,q) &:=&  \frac{1}{k_n}\sum_{t=1}^{m_n} g({\mathcal B}_{t}/(u\,x_{b_n}))\1(\|{\mathcal B}_{t}\|_{q} > u \,x_{b_n})\,,\qquad g\in \mathcal F_f\,,\\
 \widetilde g^\bfQ(u) &:=&  \frac{1}{k_n^\prime} \sum_{t = 1}^{m_n} g(\mathcal{B}_t/(u \,x^\prime_{b_n})) \1( \|\mathcal{B}_t\|_\infty > u\, x^\prime_{b_n}),  \qquad g\in \mathcal F_h\,,\label{eq:hill:oracle} 
\eeam
 where $u,q > 0$.
 The sequences $(x_n)$, $(b_n)$, $(m_n)$, $(k_n)$ that we consider, defining the block estimator are the ones fixed in \eqref{eq:k:2}, and $(x_n^\prime), (k_n^\prime)$ are as in \eqref{eq:k:app}.
Let $\epsilon, \delta > 0$ be such that the conditions of Theorem~\ref{thm:main} are satisfied and define
\beao
\mathcal{T}_f &:=& \{ \widetilde{g}^\bfQ(u,q) \}_{ \{  
g \in \mathcal{F}_f,
u \in [1-\epsilon, 1+\epsilon],  
q \in [\a-\epsilon,\a +\epsilon]  \}}, \\
\mathcal{T}_h &:=& \{ \widetilde{g}^\bfQ(u) \}_{ \{ 
g \in \mathcal{F}_h,
u \in [1-\epsilon, 1+\epsilon] \}}. \nonumber 
\eeao 
We are interested in the family of stochastic processes
\beam \label{eq:t:1} \label{eq:t} 
\mathcal{T} &=& \mathcal T_f \cup \mathcal T_h\, ,
\eeam 
indexed by four functions $f_\a, 1, h, e,$ and the values of $(u ,q) \in [1-\epsilon,1+\epsilon]\times [\a - \epsilon, \a  + \epsilon]$, and abusing notation, we also note $\mathcal{T}$ the index set relative to this family.
Consider also the blocks 
\beam\label{eq:notation:blocks}
 \mathcal{B}^*_t \quad := \quad  \bfX^*_{(t-1)b_n+[1,b_n]}, \qquad t=1, \dots, m_n\,,
\eeam 
such that the triangular array $(\mathcal{B}_t^*)_{1\le t \le m_n}$ is a $1$-dependent sequence.
In particular, this implies 
$(\mathcal{B}_t^*)_{1\le t \le m_n, t  \text{ odd}}$ is  a sequence of iid
blocks distributed as $\mathcal{B}_{1}$, and similarly for even time indices.
Define the  block estimators relative to $(\mathcal{B}_t^*)$ as
\beam\label{eq:oracle:iid}
\widetilde{g}_{,*}^\bfQ(u,q) 
 &=& \frac{1}{k_n }  \sum_{t=1}^{ m_n } g(\mathcal{B}^*_{t}/(u\, x_{b_n}) )\1( \|\mathcal{B}^*_{t}\|_q > u\, x_{b_n} )\,,  \quad (u,q,g) \in \mathcal{T},\\
\widetilde{g}_{,*}^\bfQ(u) 
 &=& \frac{1}{k^\prime_n }  \sum_{t=1}^{ m_n } g(\mathcal{B}^*_{t}/(u\, x^\prime_{b_n}) )\1( \|\mathcal{B}^*_{t}\|_\infty > u\, x^\prime_{b_n} ), \quad (u,q,g) \in \mathcal{T}. \nonumber 
\eeam



\subsection{Preliminaries}\label{sec:proof:sketch}
The main step to prove Theorem~\ref{thm:main} is the uniform central limit theorem of  the stochastic process: 
\beam \label{eq:sequence:st:processes:0}
\big\{\sqrt{k_n} \left( \widetilde{g}^\bfQ(u,q) - c(q)u^{-\a}g^{\bfQ}(q) \right) \big\}_{(u,q,g) \in \mathcal T  } 
 \eeam   
towards a centered Gaussian process $\mathbb{G} : \mathcal{T} \to \mathbb{R}$ 
defined by
\beam\label{eq:def:limit:gauss}
\mathbb G &=& \big(\mathbb G(g(\cdot/u), q) \big),
\eeam
with covariance structure

\beam\label{eq:covariance}\label{eq:cov:gaussian}
 \lefteqn{ \Cov\big( \mathbb{G}(g(\cdot/v),q) ,\mathbb{G}(f(\cdot/u), {q^\prime } ) \big) }\nonumber \\
&=& 
v^{-\alpha} \frac{c(q)}{c(p)} \E[ g ( Y\bfQ^{(q)})  f ( Y\bfQ^{(q)}  v/u  )\1(\|Y \bfQ^{(q)}\|_{q^\prime} > v/u) ], \\
\lefteqn{ \Cov\big( \mathbb{G}(g(\cdot/v),q) ,\mathbb{G}(h(\cdot/u), \alpha ) \big) }\nonumber \\
    &=& 
 v^{-\alpha} \kappa \frac{c(q)}{c(p)} \E[ g ( Y\bfQ^{(q)})  h ( Y\bfQ^{(q)}  (c(p)/\kappa)^{1/\alpha} v/u  )],\nonumber  \\ 
\lefteqn{ \Cov\big( \mathbb{G}(h(\cdot/v),\alpha) ,\mathbb{G}(e(\cdot/u), \alpha ) \big) }\nonumber \\
  &=& v^{-\alpha} \kappa \, \E[ h ( Y\bfQ)  e ( Y\bfQ v/u  ) ],\nonumber 
\eeam
for $u,v \in [1-\epsilon, 1+\epsilon]$, $q, q^\prime \in [\a - \epsilon, \a + \epsilon]$, and $g,f \in \mathcal{F}_f$, such that $\bfQ^{(q)} \in \tilde{\ell}^q$, is the $q-$cluster process of the series. 
We can replace $h$ by $e$ or $e$ by $h$ in the previous equalities  to obtain the full covariance structure of this Gaussian process.
\par 
In the following it will be useful to consider the totally bounded semi-metric $\rho : \mathcal{T} \to \mathcal{T} $ defined as
\beao 
\lefteqn{\rho^2\big( (u,q,g), (v, q',g') \big)}
 \\
&=& \Var\big( \mathbb{G}(g(\cdot/u),q) -
 \mathbb{G}(g^\prime(\cdot/v),q') \big),
\eeao 
for $q,q' \in [\a - \epsilon, \a + \epsilon]$ and $u,v \in [1-\epsilon, 1+\epsilon]$, and $g,g' \in \mathcal F$.
Fixing $g\in \mathcal F$ the envelope of the class $\mathcal{T}_{g}$ is 
\beao 
{\bfG}_{g}(\bfx) &:=& \sup_{(u,q) \in [1-\epsilon,1+\epsilon] \times[\a-\epsilon,\a+ \epsilon]}
\left| 
g(\bfx/u)\1(\|\bfx\|_q > u)
\right|\\
&= &  g(\bfx/(1-\epsilon)) \1(\|\bfx\|_{\alpha-\epsilon} > (1-\epsilon)),
\eeao
and note that
\beao\label{eq:finite:envelop} 
\E[(\mathbb{G}(\bfG_g,q))^2] < \infty\,,
\eeao
for every $q\in [\a-\epsilon, \a+\epsilon]$ under  Condition {\bf L}.

\par 
To show the uniform central limit theorem of 
the stochastic process \eqref{eq:sequence:st:processes:0}
with paths on $(\mathcal{T},\rho)$,
we plan to apply Theorem 18.14 in \cite{vandervaart:2000}.
We first show asymptotic equicontinuity of the family $\mathcal{T}$,
and finally we conclude by computing the finite-dimensional limits. 
\par
More precisely, we follow the plan below:
    \begin{enumerate}
        \item[(1)] We start by showing that, asymptotically, 
        we can replace the row-wise triangular array $(\mathcal B_t)_{1\le t \le m_n}$ by an array $(\mathcal B^*_t)_{1\le t \le m_n}$ as in \eqref{eq:notation:blocks}. 
          This will allow us to apply Theorem C.4.5 in \cite{kulik:soulier:2020} yielding  asymptotic equicontinuity on $(\mathcal{T},\rho)$,
        \item[(2)] we then compute the covariance structure of the block estimators \eqref{eq:oracle:cluster} and \eqref{eq:hill:oracle} yielding the finite-dimensional limits of the process $\wt g^{\bfQ}$ properly centered and normalized,
        \item[(3)] finally we establish a control on the complexity of the family $\mathcal{F}$ defined in \eqref{eq:t}.
    \end{enumerate}
Bearing this in mind, 
Section~\ref{sec:coupling} contains preliminaries on coupling theory and yields (1),  Section~\ref{sec:cov:stru} studies covariances structures to tackle (2), and
Section~\ref{sec:intro:entropy} discusses the uniform entropy theory to assess (3).
The proof of Theorem~\ref{thm:main} is deferred to Appendix~\ref{proof:thm:main}. 
The proofs of preliminary results are deferred to Appendix~\ref{sec:appendix:C}.

\subsection{Coupling theory}\label{sec:coupling}

The following Proposition tackles part number (1) from the proof sketch in Section~\ref{sec:proof:sketch}, and its proof is deferred to Section~\ref{subs:sec:proof:prop}.

\begin{proposition}\label{prop:coupling}
Consider a cluster functional $g \in \mathcal{G}_+(\tilde{\ell}^p)$ satisfying ${\bf L}$, and assume {\bf M}, ${\bf MX}_\beta$ are satisfied and 
 $m_n \beta_{b_n} \to 0$, as $ n \to \infty$. 
Then, there exists an array $(\mathcal{B}_t^*)_{1\le t \le m_n}$ as in \eqref{eq:notation:blocks} such that, for all $\delta > 0$,
\beao 
 \lim_{n \to \infty} \P\Big( \sup_{ \substack{ u \in [1-\epsilon, 1+ \epsilon],\\
q \in [\alpha- \epsilon, \alpha + \epsilon] }} \sqrt{k_n}\, | \widetilde g^{\bfQ}(u,q) - \widetilde g^\bfQ_{,*}(u,q)| > \delta \Big)&=&0 \,, \quad g \in \mathcal{F}_f, \\
\lim_{n \to \infty} \P\Big( \sup_{ u \in [1-\epsilon, 1+ \epsilon]} \sqrt{k^\prime_n}\, | \widetilde g^{\bfQ}(u) - \widetilde g^\bfQ_{,*}(u)| > \delta \Big)&=&0   \, , \quad g \in \mathcal{F}_h.
\eeao
\end{proposition}

Indeed, since the family $\mathcal F$ is indexed by only a finite number of functions, 
it is enough to check Proposition~\ref{prop:coupling} separately, for each $g \in \mathcal{F}$ (see \eqref{eq:F}).

\subsection{Covariance structure}\label{sec:cov:stru}

The following Lemma holds and we defer its proof to Section~\ref{sec:lem:covariance:1}.
\begin{lemma}\label{lem:covariance:1}
Consider functions $g,h :\tilde{\ell}^p \to \mathbb{R}$ satisfying $\bf L$. Assume that there exists a sequences $(\ell_n),$ satisfying $\ell_n \to  \infty$, and 
    \[\lim_{n \to \infty} m_n \beta_{\ell_n}/k_n \quad  = \quad  \lim_{n \to  \infty} \ell_n/b_n \quad = \quad  0.\]
Then, the relation below holds
\beam\label{eq:cov:zero:1}\label{eq:finite:covariance}
    \lim_{n \to  \infty } \frac{m_n}{k_n} \Cov(\, g(x_{b_n}^{-1}\mathcal{B}_1),f(x_{b_n}^{-1}\mathcal{B}_{2} ) \, ) &=& 0.
\eeam
\end{lemma}

The covariance structure of the deterministic threshold estimators is determined by the Lemma below whose proof is postponed to Section~\ref{proof:lem:iid:blocks}. 
This Lemma studies the step (2) of the proof plan in Section~\ref{sec:proof:sketch}.
\begin{lemma}\label{proof:cov:iid:blocks}
Consider $g, f :\tilde{\ell}^p \to \mathbb{R}$ to be cluster functional satisfying {\bf L},  and let $h , e : \tilde{\ell}^\infty \to \mathbb{R}$ be the cluster functional for the Hill estimator and the exceedances estimator in \eqref{eq:func:hill} and \eqref{eq:func:exceedances}. 
Then, if $p = \alpha$, for $q,q^\prime \in [\alpha - \epsilon, \alpha + \epsilon]$ and $u,v \in [1-\epsilon, 1+\epsilon]$,
\beam
\lefteqn{\lim_{n\to\infty} k_n \, \cov\big( \widetilde g^\bfQ_{,*}(v,q),   \widetilde f^\bfQ_{,*}(u,q^\prime)\big)}\nonumber \\
&=& \Cov\big( \mathbb{G}(g(\cdot/v),q) ,\mathbb{G}(f(\cdot/u), {q^\prime } ) \big), \label{eq:cov:iid:2}  
\eeam 
Moreover,  if $k_n/k_n^\prime \to \kappa > 0$, then
\beam\label{eq:cov:iid} 
\lefteqn{
\lim_{n\to\infty}   k_n \, \cov\big( \widetilde g^\bfQ_{,*}(v,q),   \widetilde h^\bfQ_{,*}(u)\big)} \nonumber\\ 
    &=& \Cov\big( \mathbb{G}(g(\cdot/v),q) ,\mathbb{G}(h(\cdot/u), \alpha ) \big), \\
\lefteqn{ \lim_{n\to\infty}  k_n \, \cov\big( \widetilde h^\bfQ_{,*}(v),   \widetilde e^\bfQ_{,*}(u)\big)}\nonumber\\
&=&   \Cov\big( \mathbb{G}(h(\cdot/v),\alpha) ,\mathbb{G}(e(\cdot/u), \alpha ) \big) \label{eq:cov:iid:3} 
\eeam
with the notation in \eqref{eq:def:limit:gauss}.
The result remains true for any $p\neq \a $ by fixing $q=q'=p$, and replacing $h$ by $e$  or $e$ by $h$. 
\end{lemma}

\subsection{Uniform entropy theory}\label{sec:intro:entropy}
In this section we discuss how we assess the step (3) of the proof plan (see Section~\ref{sec:proof:sketch} for details).
Theorem~\ref{thm:main} states asymptotic normality of $\ell^{\widehat \a}-$block estimators in \eqref{eq:estimator:cluster}.
Our proof considers a family of block estimators indexed by $q$, for $q$ in a neighborhood of $\a$,
and relies on Lemma~\ref{lem:pnorm:VCclass} below showing this family has low complexity in terms of entropy numbers.
We review the classical results of the theory of Vapnik-Cervonenkis below to measure the complexity of classes of functions. 
We refer to  \cite[Section 2.6]{vandervaart:wellner:1996} for a detailed treatment.
\par 
Let $\mathcal X$ be a measurable space and let $\mathcal V$ be a collection of sets from this space. 
The VC--dimension of  $\mathcal{V}$ is the smallest number $s$ such that for every set containing $s$ elements,
we can find a subset that is not picked out by the class $\mathcal{V}$. 
We say that $\mathcal{V}$ is a VC--class if its VC--dimension is finite.
A VC-class of functions $\mathcal{F}$ is such that the collections of all the subgraphs $
\{(\bfx,u)
: 
\phi(\bfx) >u\}$  of real-valued functions $\phi \in \mathcal{F}$ is a VC-class.  
The entropy number of a VC-class has a  polynomial expression on the VC-dimension  (see Theorem 2.6.7. in \cite{vandervaart:wellner:1996}).
Moreover, given a VC-class of functions $\mathcal F$, the VC-Hull of $\mathcal F$ is the collection of functions  $\mathcal{G}$ such that for every $g \in \mathcal{G}$ there exists a symmetric convex combination $f_m = \sum_{i=1}^m \alpha_i f_i$, with $\sum_{i=1}^m|\alpha_i|\le 1$, $f_i \in \mathcal{F}$, such that $g$ is the pointwise limit of the sequence $(f_m)_{m\in\mathbb{N}}$. 
Moreover, 
by Corollary 2.6.12 in \cite{vandervaart:wellner:1996} the entropy number of a VC-hull also has a polynomial expression on the VC-dimension of the underlying VC-class.
\par 
It is often easier to check that $\mathcal{F}$ is a VC-major class of functions $\mathcal{F}$, i.e., that $\{\bfx
: 
\phi(\bfx) > u\}$, for every $u\in \R$, is a VC-class.
We can construct new VC-major classes using classical operations: addition, products.
If $\mathcal{F}$ is VC-major, then the class of function $h \circ \phi$, with $h$ ranging over the monotone functions $h:\mathbb{R} \to \mathbb{R}$, with $\phi \in \mathcal{F}$, is VC-major. 
This is Lemma 2.6.19 in \cite{vandervaart:wellner:1996}.
One example are functions $\bfx \mapsto f(\bfx/u)\1(\phi( \bfx) > u)$, where $u \mapsto f(\bfx/u)$ is a non-increasing function.
Moreover, a bounded VC-major class satisfies the uniform entropy condition by Lemma 2.6.13 in \cite{vandervaart:wellner:1996}. 
\par 

In particular, we show that $\ell^q-$norms constitute a VC-major class. This is the purpose of the next Lemma whose proof is postponed to Section~\ref{sec:proof:lem:pnorm:VCclass}.

\begin{lemma}\label{lem:pnorm:VCclass}
    Consider the class containing all sets of the form $\{ \bfx \in \tilde{\ell}^{q_0} : \| \bfx \|_q > u\},$ where $q \in (q_0,q_0^\prime)$, and $u \in \R$. 
    Then, this is a VC-class of dimension 3.
    {
    This implies that the classes of functions \beao\mathcal{N} &:=& \{ \| \cdot \|_q: \tilde{\ell}^{q_0}  \mapsto [0,\infty), q \in (q_0,q_0^\prime)\},\eeao are VC-major for every $q_0^\prime>q_0>0$.}
\end{lemma}

\section{Proof of Theorem~\ref{thm:main}}\label{sec:lem:covariance}\label{proof:thm:main}\label{sec:proof:thm:clt}



\par 
Weak convergence of
the deterministic threshold estimators defined in \eqref{eq:t} towards the Gaussian limit in \eqref{eq:def:limit:gauss} follows by an application of Theorem 18.14 in \cite{vandervaart:2000}.
 In the following, we focus on showing the asymptotic equicontinuity and the convergence of finite-dimensional distributions. We assume $k_n/k_n^\prime \to \kappa$, for $\kappa \ge 0$, or equivalently $x_{b_n}^\prime/x_{b_n} \to (\kappa/ c(p))^{1/\alpha}$, as $n \to \infty$. Indeed, if we denote $|\bfX|'$s distribution by $F_{|\bfX|}$ and its left inverse by $F_{|\bfX|}^{\leftarrow}$, then the relations \eqref{eq:k:2} and \eqref{eq:k:app} imply
\beam \label{eq:rationx}
 \frac{x^\prime_{b_n}}{x_{b_n}}  &\sim & \frac{F^{\leftarrow}_{|\bfX|}\big(1-\frac{k^\prime_n}{n} \big) }{F^{\leftarrow}_{|\bfX|} \big(1-\frac{k_n}{n c(p) } \big)}  \; \sim \; \left( \frac{k_n^\prime c(p) }{k_n}\right)^{-1/\alpha} \;\to\; (\kappa/c(p) )^{1/\alpha},
\eeam 
as $n \to \infty$, such that the last relation follows by regular variation of $|\bfX|$ and \cite[Proposition 2.6 (v)]{resnick:2007}.
\par


\par

\subsection{Asymptotic equicontinuity}\label{subsec:uniform:gaussian}

In view of Proposition~\ref{prop:coupling},
 it suffices to establish asymptotic equicontinuity of block estimators 
 on the sequence $(\mathcal{B}^*_t)_{1\le t \le m_n}$ defined in \eqref{eq:notation:blocks}.
 Moreover, consider the restriction of the blocks estimator to odd indices:
 \beam \label{eq:odd}  
   \widetilde{g}_{\text{odd},*}^\bfQ(u,q) 
 &=& \frac{1}{k_n }  \sum_{\substack{t=1 \\ t \text{ odd}} }^{ m_n } g(\mathcal{B}^*_{t}/(u\, x_{b_n}) )\1( \|\mathcal{B}^*_{t}\|_q > u\, x_{b_n} )\,,  \quad (u,q,g) \in \mathcal{T}.
 \eeam 
Note it suffices to verify the asymptotic equicontinuity of \eqref{eq:odd} (the result for even indices will follow from stationarity of the series).
Moreover, the block estimator in \eqref{eq:odd} is written as the sum of independent random variables, then to show asymptotic equicontinuity of \eqref{eq:odd} we can rely on Theorem C.4.5 in \cite{kulik:soulier:2020}. 
 \par 
   To simplify notation, we fix any $g\in \mathcal F_f$. The same arguments will apply to $g\in \mathcal F_h$ replacing $x_{b_n}$ by $(c(p)/\kappa)^{1/\a}x_{b'_n}$ and using \eqref{eq:rationx} when $\kappa>0$. We denote 
\beao 
g(\mathcal{B}^*_{t}/(u \, x_{b_n}))\1(\|\mathcal{B}^*_{t}\|_q > (u \, x_{b_n})) &=& g(\mathcal{B}^*_{t}/{x_{b_n}})(u,q),
\eeao 
and
 we consider the family $\mathcal{T}_{g,\ast}$ of stochastic processes defined by the independent-block estimators in \eqref{eq:oracle:iid}, namely,
\beao 
 \mathcal{T}_{g,*} &=& \{ \widetilde{g}_{ {\text{odd}},*}^\bfQ(u,q)\}_{ \{u \in [1-\epsilon,1+\epsilon], q \in [\a - \epsilon, \a + \epsilon] \} },
\eeao 
where $ \widetilde{g}_{,*}^\bfQ(u,q) $ is as in \eqref{eq:odd}. 
Recall this process is indexed by $[ 1-\epsilon,1+\epsilon]\times[ 1-\epsilon,1+\epsilon] $.
Define the random metric $d_n(\cdot, \cdot)$ on this family by
\beam \label{eq:random:metric}
\lefteqn{
\big(
d_n((u,q), (v,q^\prime))\big)^2}\nonumber \\
&=& \frac{1}{ {k_n} } \sum_{{ \substack{t=1 \\ t \text{ odd} }} }^{m_n}\left( g(\mathcal{B}^*_{t}/{x_{b_n}})(u,q) - g(\mathcal{B}^*_{t}/{x_{b_n}})(v,q^\prime) \right)^2.
\eeam 
In the remaining of the proof, 
we verify the sequence of processes $\mathcal{T}_{g,*}$ satisfies the Lindeberg condition (i), 
continuity condition (ii), and entropy  condition (iii) 
from Theorem C.4.5 in \cite{kulik:soulier:2020} hold. 
\par 
In the following let $u_0 = 1- \epsilon < 1 < 1+ \epsilon = s_0$ and $q_0 = \a - \epsilon < \a < \a + \epsilon = q_0^\prime$.
\subsubsection{Lindeberg condition (i)}
Since $u \mapsto g(\bfx/u)$ is a non-increasing function,
then we it suffices to verify, for every $\eta > 0$, $q\in[q_0,q^\prime_0], u \in [u_0,s_0]$,
    \beao 
      \lefteqn{I} &=& \frac{m_n}{k_n}\E\left[ 
      \left( g(\mathcal{B}_{1}/ x_{b_n})(u,q)\right)^2 
 \, \1\big( g(\mathcal{B}_{1}/ x_{b_n})(u,q) > \sqrt{\eta k_n} \big) 
   \right] \\
      &\to& 0, \quad n \to \infty.
    \eeao
Indeed, we have
\beao 
I&\le& \frac{m_n}{k_n}
\E\left[ 
      \left(g(\mathcal{B}_{1}/ x_{b_n})(u,q)\right)^{2+\delta }\right]^{\frac{2}{2+\delta}}
      \P\left(  g(\mathcal{B}_{1}/  x_{b_n})(u,q) > \sqrt{\eta k_n}\right)^{\frac{\delta}{2+\delta}} \\
&\le&  (\eta\,k_n)^{-{\delta}/{2}} \frac{m_n}{k_n}  \E\left[ 
      \left(g(\mathcal{B}_{1}/ x_{b_n})(u,q)\right)^{2+\delta}\right].\\
\eeao
where $\delta$ here follows the notation in \eqref{eq:cond:lind:1}. Then, appealing to assumption $\bf L$,
we deduce $I \to 0$, as $n \to \infty$. 
\par 
\subsubsection{Continuity condition (ii)}
Let $s > u$, $q , q^\prime$, 
and denote 
\beam\label{eq:cov:notation}
c((u,q),(s,q^\prime)) : = \Cov\big( \mathbb{G}(g(\cdot/u),q) ,\mathbb{G}(g(\cdot/v),  {q^\prime }) \big),
\eeam 
and $c(u,q) = c((u,q),(u,q))$, and $c(q)$ is as in \eqref{eq:constant:cp:1:tcl}.
Then, applying
Lemma~\ref{proof:cov:iid:blocks},
\beao 
\lefteqn{
{2 \, } \E\big[\big(
d_n((u,q), (s,q^\prime))\big)^2\big]
}\\
&=&
\frac{m_n}{k_n}\E\left[ \left(g(\mathcal B_{1}/x_{b_n})(u,q) - g(\mathcal B_{1}/x_{b_n})(s,q^\prime) \right)^2\right]
\\
 &\xrightarrow[]{}&  c(u,q) + c(s,q^\prime) - 2c( (u,q), (s,q^\prime))\\
 &=& 
 u^{-\a}c(q)(g^2)^{\bfQ}(q) +  s^{-\a}c(q^\prime)(g^2)^{\bfQ}(q^\prime)\\
 && - 2 u^{-\a} c(q)\E[g(Y\bfQ^{(q)})g(Y\bfQ^{(q)} u/v) \1(\|Y\bfQ^{(q)}u/v\|_{q^\prime} > 1) ]. 
\eeao 
We now use the fact that $ v \mapsto g(\cdot/v)$ is a non-increasing function, for $u > v$,
\beam\label{eq:bound:covariance:uq} 
\lefteqn{ c(u,q) + c(s,q^\prime) - 2c( (u,q), (s,q^\prime))} \nonumber \\
&\le&   u^{-\a}c(q)(g^2)^{\bfQ}(q) +  s^{-\a}c(q^\prime)(g^2)^{\bfQ}(q^\prime) \nonumber  \\
 && - 2 u^{-\a} c(q)\E[g(Y\bfQ^{(q)})^2 \1(\|Y\bfQ^{(q)}\|_{q^\prime} > 1) ] \nonumber  \\
 &=& s^{-\a}c(q^\prime)(g^2)^{\bfQ}(q^\prime) - u^{-\a}(g^2)^{\bfQ}(q)  \nonumber \\
 && + 2 u^{-\a} c(q)\E[g(Y\bfQ^{(q)})^2 \1(\|Y\bfQ^{(q)}\|_{q^\prime} \le 1) ].  
\eeam 
We now focus on the last term. Notice that for $q\ge q^\prime$, the last term equals zero. 
We consider the case $q < q^\prime $,
\beam\label{eq:bound:metric:continuity} 
\lefteqn{ 
   \E[g(Y\bfQ^{(q)})^2 \1(\|Y\bfQ^{(q)}\|_{q^\prime} \le 1) ] } \nonumber \\
   &\le& \E[g(Y\bfQ^{(q)})^{2+\delta}]^{2/(2+\delta)} \P(\|Y\bfQ^{(p)}\|_{q^\prime} \le 1)^{\delta/(2+\delta)}.
\eeam 
Furthermore, for $q < q^\prime $,
\beao 
\lefteqn{ \P(Y \|\bfQ^{(q)}\|_{q^\prime} \le 1) }\\
    &\le &  \P( Y^q ( \|\bfQ^{(q)}\|_q^{q} - (q-q^\prime)\mbox{$\sum_{t\in \mathbb{Z}}$} |\bfQ^{(q)}_t|^{q^\prime } \log1/ | \bfQ^{(q)}_t|)  \le 1) \\
    &=&  \P( Y^q( 1 - (q-q^\prime)\mbox{$\sum_{t\in \mathbb{Z}}$} |\bfQ^{(q)}_t|^q \log1/ | \bfQ^{(q)}_t| ) \le 1) \\ 
    &\le& 1 - \E[ (1 - (q - q^\prime)\mbox{$\sum_{t\in \mathbb{Z}}$} |\bfQ^{(q)}_t|^{q^\prime } \log1/ | \bfQ^{(q)}_t|))^{\alpha/q} ] \\
    &\le& (q-q^\prime)\alpha q^{-1} \, \E[ \mbox{$\sum_{t\in \mathbb{Z}}$} |\bfQ^{(q)}_t|^{q^\prime } \log1/ | \bfQ^{(q)}_t| ].
\eeao 
Therefore, as $q < q^\prime$, we obtain
\beao 
\P(Y \|\bfQ^{(q)}\|_{q^\prime} \le 1) &\le& (q-q^\prime)\alpha q^{-1} \, \E\Big[\sum_{t\in \mathbb{Z}} |\bfQ^{(q)}_t|^{q} \log1/ | \bfQ^{(q)}_t| \Big].
\eeao 
Then, notice that by the change-of-norms in Equation~\eqref{eq:change:of:norms}
\beao
\E\Big[ \sum_{t\in \mathbb{Z}} |\bfQ^{(q)}_t|^{q} \log 1/ | \bfQ^{(q)}_t|  \Big] 
&=&  c(q)^{-1}\E\Big[ 
\frac{\|\bfQ\|_q^{\a}}{\|\bfQ\|_q^q} \sum_{t\in \mathbb{Z}} |\bfQ_t|^{q} \log \frac{\|\bfQ\|_q}{ | \bfQ_t|}  \Big] \\
&\le& (\epsilon c(q))^{-1} \E[ \| \bfQ\|_{\a - \epsilon}^{\a + \epsilon} ],
\eeao
and the last relation follows by the monotonicity of $\ell^p-$norms and the fact that
for every chosen $0<\eta <1$ we have 
\beam \label{eq:bound:logarithm}
\log (1/x)\; =\; 1/\eta\log(1/x^\eta) \; \le\;  1/\eta x^{-\eta}\,,\qquad 0<x\le 1\,,
\eeam 
Hence, appealing to conditions {\bf M} and {\bf L} the term in \eqref{eq:bound:metric:continuity} is bounded by constant $C < \infty$.
We can now conclude the following bound for \eqref{eq:bound:covariance:uq} 
\beao 
\lefteqn{ c(u,q) + c(s,q^\prime) - 2c( (u,q), (s,q^\prime))}\\
&\le& s^{-\a}c(q^\prime)(g^2)^{\bfQ}(q^\prime) - u^{-\a}c(q)(g^2)^{\bfQ}(q)  + 2 u^{-\a}c(q)(q-q^\prime)\alpha q^{-1}C.
\eeao 
Finally, recall from Proposition~\ref{prop:existence:cluster:process} that
\beao 
c(q) &=& \E[ \|\bfQ\|_q^\alpha ]   \; = \; (\E[ 1/ \|\bfQ^{(q)}\|_{\alpha}^{\alpha}])^{-1},
\eeao 
and then it is easy to see by monotone convergence that $q \mapsto c(q)$ is a continuous function at $\a$. Finally, this implies
\beao 
\lim_{\eta  \downarrow 0} \limsup_{n \to  \infty} \sup_{ 
\substack{ u,s \in [u_0,s_0], \\ 
q,q^\prime \in [q_0, q^\prime_0], \\
\rho((u,q),(s,q^\prime)) < \eta } }  \E\big[\big(
d_n((u,q), (s,q^\prime))\big)^2\big] &=& 0.
\eeao 
From this we conclude that (ii) holds.

\subsubsection{Entropy condition (iii)}
Consider $g\in \mathcal{F}_f$ as a cluster functionals: $\bfx \mapsto g(\bfx)$ for $\bfx \in\tilde{\ell}^{q_0}$, $q_0>0$ (see \eqref{eq:1:functional}, \eqref{eq:func:hill}, \eqref{eq:func:exceedances}).
Denote $\mathcal{T}_{g}$ the class of functions 
\beao 
(\bfx,u,q) &\mapsto&  g(\bfx/u)\1(\|\bfx\|_q > u),
\eeao 
indexed by $[u_0,s_0] \times [q_0,q^\prime_0]$ for fixed $g\in \mathcal F$. 
It is sufficient to show that the class $\mathcal{T}_{g}$ satisfies the entropy condition in (iii) with respect to the random metric introduced in 
\eqref{eq:random:metric}. Indeed $\mathcal{T}_{g,\star}$ will also satisfy the entropy condition considering $\mathcal B_1/x_{b_n}$ as a sequence in $\tilde{\ell}^{q_0}$.
Moreover, notice we can apply Lemma C.4.8 in \cite{kulik:soulier:2020}. 
Indeed, we can verify condition C.4.8 using Lemma~\ref{proof:cov:iid:blocks}, as 
\beao 
\lefteqn{\frac{m_n}{k_n}\E\left[ \left(g(\mathcal B_{1}/x_{b_n})(u,q) - g(\mathcal B_{1}/x_{b_n})(s,q^\prime) \right)^2\right]}
\\
 &\xrightarrow[]{}&  c(u,q) + c(s,q^\prime) - 2c( (u,q), (s,q^\prime)) < \infty, \qquad n \to \infty. 
 \eeao 
This means that
it is enough to check that $\mathcal{T}_{g}$ is a VC-hull class, as introduced in Section~\ref{sec:intro:entropy}, and then apply Corollary 2.6.12 in \cite{vandervaart:wellner:1996} giving a satisfactory bound on the entropy. 
In the following we treat separately the case $g$ equal to $f_\a $ or $1$ and the case $g$ equal to $h$ or $e$. 

\subsubsection*{Case $g = f_\a$ and $g = 1$}
Consider the class of functions $ \|\bfx\|_q: \bfx \mapsto \|\bfx\|_q $, for elements $\bfx \in\tilde{\ell}^{q_0}$, and $q \in [q_0, q^\prime_0]$. 
By Lemma~\ref{lem:pnorm:VCclass}, this class of functions is a VC-major class, as the sets $\{ \bfx\in\tilde{\ell}^{q_0} : \|\bfx\|_q > u\}$, for $q \in [q_0, q^\prime_0]$, and $u > 0$, forms a VC-class of dimension $3$. 
Finally, applying Lemma 2.6.19 in \cite{vandervaart:wellner:1996} for the monotone functions $\psi_u : \mathbb{R} \to \mathbb{R}$ defined by:
\beao\label{eq:class} 
u &\mapsto&  g(\bfx/u) \1(\|\bfx\|_{q}> u),
\eeao 
indexed by $u$, we see that $\mathcal{T}_{g}$ is a VC-major. Finally, Lemma 2.6.13 in \cite{vandervaart:wellner:1996} states that bounded VC-major classes are VC-hull classes and this yields the desired result.

\par 
\subsubsection*{Case $g = h, e$}
This case has been studied previously, for example, we can borrow the results in \cite{kulik:soulier:2020}. 
Here by an applications of Corollary C.4.20 in \cite{kulik:soulier:2020} we conclude the entropy condition is satisfied since these are linearly ordered functionals.

To sum up, we have verified the sequence of processes $\mathcal{T}_{g,*}$ satisfies the Lindeberg condition $(i)$, the continuity condition $(ii)$, and the entropy condition $(iii)$ from Theorem C.4.5. in \cite{kulik:soulier:2020}. 
Therefore, we conclude the asymptotic continuity of the processes indexed by $\mathcal{T}$.


\subsection{Uniform central limit theorem}\label{prop:finite:dimensional}

Consider a function $f_\a:\tilde{\ell}^\a \to \mathbb{R}$ satisfying $\bf L$. {It is enough to check the uniform central limit theorem on the processes indexed by $\mathcal F_f$ when $\kappa = 0$. Therefore we focus on the cases where $\kappa>0$ and the family $\mathcal F$ as in \eqref{eq:F}.}
Note that by the assumptions on Theorem~\ref{thm:main} we have that Proposition~\ref{prop:coupling}, and Lemma~\ref{proof:cov:iid:blocks} hold. 
Asymptotic equicontinuity of the family
\beam \label{eq:sequence:st:processes:2}
\big\{\sqrt{k_n} \left( \widetilde{g}^\bfQ(u,q) - c(q)u^{-\a}g^{\bfQ}(q) \right) \big\}_{(u,q,g) \in \mathcal T}
 \eeam  
holds by the calculations from Section~\ref{subsec:uniform:gaussian}, then it remains to verify the
asymptotic normality of the finite-dimensional parts of the family in \eqref{eq:sequence:st:processes:2}. 
Applying the Wold device it is enough to check that every linear combination of deterministic threshold 
estimators in \eqref{eq:oracle:cluster} is asymptotically normal distributed.
{Similarly as before, it
suffices to verify this condition with respect to the block estimator~\eqref{eq:oracle:iid} 
by an application of Proposition~\ref{prop:coupling}. 
}
Moreover, note any such a linear combination $g$ is again a cluster functional satisfying ${\bf L}$ { because $\kappa>0$ and 
\[
\frac{\mathbb{P}(\| {\bf X}_{[1,n]}  \|_p  > x_n ) }
{\mathbb{P}(\| {\bf X}_{[1,n]}  \|_p  > x^\prime_n ) }
\sim \Big(  \frac{x_n}{x_n'} \Big)^{-\alpha} \to \frac{\kappa}{c(p)}\,, \qquad \nto\,.
\]
}We thus apply Proposition~\ref{prop:coupling} to $g$. 
\par 
Hence, for such a linear combination $g$ denote by $\widetilde{g}^\bfQ_{,*}$ its corresponding block estimator which will be a linear combination of block estimators as in~\eqref{eq:oracle:iid}, and denote $g^\bfQ(p)$ its cluster-statistic with the notation in \eqref{eq:clust:statistic}.  Then, it is enough to check that the real-valued variable:
\beam \label{eq:sequence:st:processes:3}
\sqrt{k_n}  \left( \widetilde{g}^\bfQ_{,*} - g^{\bfQ}(p)\right),
 \eeam  
 admits a Gaussian limit, as $n \to \infty$.
Moreover, note we can replace $g^\bfQ(p)$ by the expectation of $\widetilde{g}_{,*}^\bfQ$ in \eqref{eq:sequence:st:processes:3} thanks to the bias assumptions ${\bf B}_\alpha(k_n)$ and ${\bf B}(k^\prime_n)$. 
{
In addition, note that if \eqref{eq:sequence:st:processes:3} is the sum of independent random variables, we can apply the Lindeberg central limit theorem for triangular arrays \cite[Section 18]{billingsley:2013}.
We apply an extension of Lindeberg's theorem for weakly mixing triangular arrays provided in Theorem 4.4 \cite{rio:2017} noting the triangular array $(\mathcal{B}^*_t)$ is $1$--dependent. 
Condition $(a)$  in there is granted by the variance calculations in Lemma~\ref{proof:cov:iid:blocks}.
To verify $(b)$ it suffices to check Equation~(4.14) therein, which in our setting is granted by condition {\bf L}.
}
Finally, Lemma~\ref{proof:cov:iid:blocks}  allow us to compute the asymptotic variance of \eqref{eq:sequence:st:processes:3} recentering by its expectation, and this is enough for the weak convergence of \eqref{eq:sequence:st:processes:3} to a normal distribution as $n \to \infty$.
Then, we conclude the uniform asymptotic normality of the family in \eqref{eq:sequence:st:processes:2} towards the Gaussian process $\mathbb G$ defined in \eqref{eq:def:limit:gauss}.

\black
\subsection{Variance calculations}\label{sec:variance:calculation}

In the case where \eqref{eq:sequence:st:processes:2} holds,
applications of Vervaat's lemma allow us to compute the variance of the Gaussian limit of the Hill and $\ell^{\widehat \a}-$block estimators. 
\begin{lemma}[Asymptotics of the Hill estimator $1/\widehat{\alpha}$]\label{lem:Hill:limit}
    Assume the conditions of Theorem~\ref{thm:main} are satisfied and $k_n/k_n^\prime \to \kappa$, for $\kappa > 0$. 
    Then $\|\mathcal{B}_t\|_{\widehat \alpha, (k+1)}/x_{b_n} \xrightarrow[]{\P} 1$, and 
    \beam \label{eq:normality:hill}
    \sqrt{k_n}\, ( \widehat{\a} - \a ) & \xrightarrow[]{d} &  \a \mathbb G(\a h(\cdot/1) - e(\cdot/1), \alpha ), \qquad n \to \infty.
    \eeam 
\end{lemma}


\begin{lemma}[Asymptotics of the random threshold estimator]\label{lem:random:threshold}
     Assume the conditions of Theorem~\ref{thm:main} are satisfied and $k_n/k_n^\prime \to \kappa$, for $\kappa \ge   0$. 
\beao 
 \sqrt{k_n}\, \big(\,\widehat{f_{\a}^\bfQ}(\widehat \a) - f^\bfQ_\a\,\big)
 &\xrightarrow[]{d}&  f_\a^\bfQ \mathbb G( \, f_\a(\cdot/1)/f_\a^\bfQ - 1(\cdot/1)\,,\alpha), \qquad n \to \infty.
\eeao
\end{lemma}
The proofs of these lemma are deferred to Section \ref{sec:appendix:E}.



We now focus on establishing the asymptotic variance in  \eqref{eq:central:limit}.
Recall that when $f$ depends on $\a$, i.e., $f = f_\a$, we impose a smoothness assumption ${\bf S}$ on the function $q \mapsto f_q$. More precisely, we assume 
\beao 
f_q(\bfx) &=& f_\a(\bfx) + (q-\alpha)f^\prime_\a(\bfx) + \frac{1}{2}(q-\a)^2 R_\a(\bfx),
\eeao 
where $f^\prime_\a$ is the derivative of $f_a$ with respect to $a$, i.e., $f^\prime_\a = \tfrac{\partial f_q}{ \partial q} \big|_{q = \a }$, and 
\beao
R_\a(\bfx) &\le&  \sup_{q \in (\a - \epsilon, \a + \epsilon)} \Big|\frac{\partial^2 f_q}{ \partial q^2} (\bfx) \Big|.  
\eeao 
{Then, we see by Lemma~\ref{lem:con} that $f^\prime_\alpha$, $\sup_{q \in (\a-\epsilon, \a + \epsilon)} | \partial^2 f_q/\partial q^2| $ admit consistent $\ell^{\widehat \alpha}-$cluster estimates. Moreover we have
\beam \nonumber
 \sqrt{k_n}\, \big(\,\widehat f^\bfQ_{\widehat \a}(\widehat \a) - f^\bfQ_\a \,\big) 
&=& \sqrt{k_n} \big(\,\widehat f_{\a}^\bfQ (\widehat \a) - f_\a^\bfQ \big) + \frac{\sqrt{k_n}}{\sqrt{k_n^\prime}} \sqrt{k_n^\prime}  \big( \alpha - \widehat \alpha \big)  \widehat  f_\a^{{\prime}^\bfQ}  (\widehat \a)   \\\label{eq:var:decomp}
&& +  \frac{\sqrt{k_n}}{\sqrt{k_n^\prime}} \sqrt{k_n^\prime} \big( \alpha - \widehat \alpha \big)^2 \widehat R_\a^\bfQ(\widehat \a).  
\eeam 
as $n \to \infty$, where $k_n^\prime$ is the tuning parameter for the Hill estimator whereas $k_n$ is used to tune the extremal cluster estimator. 
\par 
Recall  $k_n/k_n^\prime \to \kappa \ge 0$. Then, we consider separately the cases $\kappa = 0$ and $\kappa > 0$.  
{
We start with the case $\kappa > 0$. Applying Lemma~\ref{lem:Hill:limit} we obtain 
\beao 
\sqrt{k_n}(\alpha - \widehat \alpha) &\xrightarrow[]{d}& \a \mathbb{G}\big(\a h(\cdot/1) - e(\cdot/1) , \alpha \big), 
\eeao 
as $n \to \infty$. Moreover, an application of Lemma~\ref{lem:random:threshold}} yields 
\beao 
 \sqrt{k_n}\, \big(\,\widehat{f_{\a}^\bfQ}(\widehat \a) - f^\bfQ_\a\,\big)
 &\xrightarrow[]{d}&  f_\a^\bfQ \mathbb G( \, f_\a(\cdot/1)/f_\a^\bfQ - 1(\cdot/1)\,,\alpha).
\eeao
Then, we can conclude from Equation~\eqref{eq:var:decomp} that
\beao 
\lefteqn{ \sqrt{k_n}\, \big(\,\widehat f^\bfQ_{\widehat \a}(\widehat \a) - f^\bfQ_\a \,\big)}\\
& \xrightarrow[]{d}& \mathbb{G} \big( (f_\a(\cdot/1) - f_\a^\bfQ 1(\cdot/1)) + (f_\a^{\prime \bfQ})  \a( \a  h(\cdot/1) - e(\cdot/1)) , \alpha \big)\\
&=& \mathcal{N}(0,\sigma^2(\kappa)).
\eeao
The limit variance $\sigma^2(\kappa)$ can be computed from the covariance structure in \eqref{eq:cov:gaussian} and is given explicitly in \eqref{eq:sigma:2}. 
Notice that it depends on the parameter $\kappa > 0$.
More precisely,
\beao
\sigma^2(\kappa) &=&  \var(f_\alpha(Y\bfQ)) +   \kappa  \alpha^2(f^{\prime\bfQ} _\alpha)^2 \sigma_\alpha^2 +  
2 \kappa  \alpha\,f^{\prime\bfQ} _\alpha\, \sigma_{f,\a}(\kappa),
\eeao 
where
\beao 
\sigma_\a^2 &:=& \E[(\a h(Y\bfQ) - e(Y\bfQ))^2],\\
\sigma_{f,\a}(\kappa) &:=& \Var[(f_\alpha(Y\bfQ) - f_\alpha^\bfQ) ( \alpha h(Y\bfQ \kappa^{-1/\alpha}) - e(Y\bfQ \kappa^{-1/\alpha} ) )].
\eeao  
\par 
Furthermore, notice that
if $f_\a(\bfx) = f_\a(\bfx/\|\bfx\|_\a)$, then $\sigma_{f,\a}(\kappa) = 0$, for all $\kappa > 0$. Otherwise, notice that
by Jensen's inequality we have
\beao 
(\sigma_{f,\a}(\kappa))^2 
 &\le&    \, \E[(f_\alpha(Y\bfQ)-f^{\bfQ}_\a)^2] \E[( \alpha h(Y\bfQ\kappa^{-1/\alpha}) - e(Y\bfQ \kappa^{-1/\alpha}) )^2], 
\eeao 
such that $\E[f_\a(Y\bfQ)^2] < \infty$. Then, we focus on the right-hand term in the previous equation.
Then, relying on the properties of $h$ and $e$ as defined in \eqref{eq:func:hill}, \eqref{eq:func:exceedances}, we obtain that for $\kappa < 1$,
\beao 
\lefteqn{
(\sigma_{f,\a}(\kappa))^2/\Var(f_\a(Y \bfQ))}\\
&\le& 
 \E\big[(\a h(Y\bfQ\kappa^{-1/\a})-e(Y\bfQ\kappa^{-1/\a}))^2 \big]\\
 &=& \E\big[(\a h(Y\bfQ\kappa^{-1/\a})-e(Y\bfQ\kappa^{-1/\a}))^2 \1(\| Y\bfQ \kappa^{-1/\alpha}\|_\a > 1) \big]\\
&=& \kappa \int_{0}^\infty \E[(\a h(y\bfQ)-e(y\bfQ))^2 \1( y > \max\{\kappa^{-1/\alpha}, 1\} )]d(-y^{-\a}) \\
& \le & \kappa \sigma_\a^2.
\eeao 
Hence, we conclude that, for $\kappa < 1$,
\beao 
(\sigma_{f,\a}(\kappa))^2 &\le& \kappa \sigma_\a^2 \,\Var(f_\alpha(Y\bfQ)).
\eeao 
In particular this implies $\sigma^2(\kappa) \to \Var(f_\a(Y\bfQ)),$ as $\kappa \to 0$.
\par 
If $\kappa = 0$, then 
under the assumption of Theorem~\ref{thm:main},
similar steps as for the proof provided in Section~\ref{proof:thm:main}, but now restricting the family $\mathcal F$ to $\mathcal F_h$ with the notation in \eqref{eq:t},
allow us to conclude the following
\beao 
\sqrt{k_n^\prime} \big( \alpha - \widehat \alpha \big) &\xrightarrow{d}& \a\,\mathcal{N}\big(0, \E[(\alpha h(Y\bfQ) - e(Y\bfQ))^2] \big),
\eeao 
as $n \to \infty$. Hence,
we see
$\frac{\sqrt{k_n}}{\sqrt{k_n^\prime}} \sqrt{k_n^\prime}  \big( \alpha - \widehat \alpha \big) \xrightarrow[]{\P} 0$, as $n \to \infty$.  Then, Equation~\eqref{eq:var:decomp} together with the previous limit implies
\beao 
\sqrt{k_n}\, \big(\,\widehat f^\bfQ_{\widehat \a}(\widehat \a) - f^\bfQ_\a \,\big) 
&=& \sqrt{k_n} \big(\,\widehat{f_{\a}^\bfQ} (\widehat \a) - f_\a^\bfQ) + o(1)\\
&\xrightarrow[]{ d }& \mathbb{G} \big( (f_\a(\cdot/1) - f_\a^\bfQ 1(\cdot/1)), \alpha \big) \\  
&=& \mathcal{N}\big(0, \Var(f_\a(Y \bfQ))\big),
\eeao 
as $n \to \infty$, where the last limit follows again from  Lemma~\ref{lem:random:threshold}.}
\par 
Overall, this calculations demonstrate the expression of the asymptotic variance in  \eqref{eq:central:limit}, and this concludes the proof of Theorem~\ref{thm:main} for $p = \alpha$.
\par 
{
In all generality, for arbitrary $p$, 
similar calculations yield the desired result with 
\beam\label{eq:sigma:2}
\sigma_\a^2 &:=& \Var\Big( \mathbb{G}\big( (\a h - e)(\cdot/1),\, \alpha\big) \Big),\\
\sigma_{f,\a}(\kappa) &:=& \Cov\Big( \mathbb{G}\big( (f_\alpha - f_\alpha^\bfQ)(\cdot/1),\, p \big),  \mathbb{G}\big( (\alpha h - e)( \cdot/1 ) , \,\alpha \big) \Big).\nonumber
\eeam
}


\black

\section{Proofs of Appendix~\ref{sec:consistency} }\label{sec:profs:app:A}

\subsection{Proof of Lemma~\ref{lem:mixing}}\label{proof:lemma:mixing}
We recall the notation on disjoint blocks defined in \eqref{eq:notation:blocks}. 
We also denote $\ell := \ell_n \to \infty$, and disjoint blocks as
\begin{align*}
  \mathcal{B}_{t,\ell} \quad := \quad  \bfX_{(t-1)b_n +[1,b_n-\ell_n]}, \qquad t=1, \dots, m_n.
\end{align*}
Notice that for all $\delta > 0, \epsilon > 0$, and for every bounded Lipschitz-continuous function $f \in \mathcal{G}_+(\tilde{\ell}^p)$ 
\beao 
    \big| \E\big[ \exp \big\{ - \tfrac{1}{k_n}\mbox{$ \sum_{t=1}^{m_n}$} \lefteqn{f(x_{b_n}^{-1} \mathcal{B}_{t})
    \big\}\big] - \E\big[ \exp \big\{ - \tfrac{1}{k} \mbox{$\sum_{t=1}^{m_n}$} f(\underline{x_{b_n}^{-1} \mathcal{B}_{t}}_\epsilon)
    \big\}
    \big]
    \big|} \\
         &\le& \E\big[\big| \tfrac{1}{k_n} \mbox{$\sum_{t=1}^{m_n}$} f(x_{b_n}^{-1} \mathcal{B}_t)
         -  \tfrac{1}{k_n} \mbox{$\sum_{t=1}^{m_n}$} f(\underline{x_{b_n}^{-1} \mathcal{B}_t}_\epsilon)
         \big|  \big] \\
        &\le& \E\big[ \tfrac{1}{k_n} \mbox{$\sum_{t=1}^{m_n}$} \big| f(x_{b_n}^{-1} \mathcal{B}_t)
        -  f(\underline{x_{b_n}^{-1} \mathcal{B}_t}_\epsilon)
        \big|  \big]\\
        &=& o\big( {m_n\P(\|\overline{\mathcal{B}_1/x_b}^\epsilon\|_p > \delta )}/k_n\big), \quad n \to \infty. 
\eeao 
This term vanishes by condition ${\bf CS}_p.$ Moreover, define
\begin{align*}
    &I = \big| \E\big[ \exp \big\{ - \tfrac{1}{k_n} \mbox{$\sum_{t=1}^{m_n}$} f_\epsilon(x_{b_n}^{-1} \mathcal{B}_{t})
    \big\}\big] - \E\big[ \exp \big\{ - \tfrac{1}{k_n}\mbox{$ \sum_{t=1}^{m_n}$} f_\epsilon(x_{b_n}^{-1} \mathcal{B}_{t,\ell})
    \big\}
    \big]
    \big|, 
\end{align*}
where $f_\epsilon(\bfx_t) := f( \underline{\bfx_t}_\epsilon)$. Then, there exists a constant $c >0$ such that
\beao
    I &\le& c \, \frac{1}{k_n} \,  \P \big( \max_{1 \le j \le m_n} \max_{1 \le i \le \ell_n}|\bfX_{(j-1)b_n -i + 1}| > \epsilon \, x_{b_n} \big) \\
            &\le& c\, \frac{\, m_n}{k_n} \P( \|\mathcal{B}_{1,\ell}\|_\infty > \epsilon \, x_{b_n}) \\
            & \le& c\, \frac{m_n \ell_n}{k_n}  \P(|\bfX_0| > \epsilon \, x_{b_n}) \quad 
            = \quad  O(\ell_n/b_n),
\eeao 
as $n \to \infty$.
Thus, we conclude that $\lim_{n \to \infty} \ell_n/b_n = 0$ is a sufficient condition yielding $I \to 0$, as $n \to \infty$. Recall the definition of the mixing coefficients $(\beta_t)$ in Section~\ref{definition:mixing}. Moreover, applying the mean value theorem we have that for all $z,y>0$, then $|e^{-z} - e^{-y}| \le |z-y|$.
For this reason,
\beao 
     \lefteqn{\big| \E\big[ \exp \big\{ - \tfrac{1}{k_n} \mbox{$\sum_{t=1}^{m_n}$} \lefteqn{ f_\vep(x_{b_n}^{-1} \mathcal{B}_{t,\ell} )
     \big\}\big]  - \E\big[ \exp \big\{ - \tfrac{1}{k_n} \mbox{$\sum_{t=1}^{m_n}$} f_\vep(x_{b_n}^{-1} \mathcal{B}^*_t)
     \big\}\big]\big|}} \\
    &\le& \frac{1}{k_n}\sum_{t=1}^{m_n}
    \big| \E\big[ f_\vep(x_{b_n}^{-1} \mathcal{B}_{t,\ell})
    - f_\vep(x_{b_n}^{-1} \mathcal{B}^*_t)
    \big] \big|\\
       &\le& \frac{2 m_n}{k_n} \|f\|_\infty  \, d_{TV} \big(\, \mathcal{L} (\mathcal{B}_{t,l}) \otimes \underbrace{ \mathcal{L}(\bfX_1) \otimes \cdots \otimes \mathcal{L}(\bfX_1)}_{\ell \text{ times } } \, , \, \mathcal{L} (\mathcal{B}_{t})\, \big) \\
        &\le&  \frac{2 m_n}{k_n} \|f\|_\infty  \beta_{\ell_n}
           \to 0, \quad n \to \infty.
\eeao 
We use first the definition of the total variation distance, and second a reformulation of the distance in terms of the mixing coefficients $(\beta_t)$. This last relation allows us to conclude that \eqref{mixing:condition2} holds under the conditions of Lemma~\ref{lem:mixing}.

{
\subsection{Proof of Lemma~\ref{pro:cor:consis} }\label{proof:lem:consis:complete}
Consider a non-negative continuous function $f=f_q: \tilde{\ell}^p \to \mathbb{R}$ satisfying the assumptions of Lemma~\ref{pro:cor:consis}.
We focus on showing the case $p=\alpha$, as the general case can be deduced following the lines of this proof keeping $q = p$ fixed.
We follow a similar argument as in the proof of Lemma 8.5 in \cite{buritica:mikosch:wintenberger:2021}
to show
\beao 
|\widetilde f_q^{\bfQ}(u,q) - f_\alpha^\bfQ(\alpha)| &\xrightarrow[]{\P}& 0, 
\eeao 
as $(n,u,q) \to (\infty, 1, \alpha)$. 
Fix $\eta > 0$, and denote $g = f_\alpha \land \eta $, then
\beam \label{eq:cons:dec}
\lefteqn{
|\widetilde f_q^{\bfQ}(u,q) - f_\alpha^\bfQ(\alpha)|} \nonumber \\
&\le &
|\widetilde f_q^{\bfQ}(u,q) - 
\widetilde f_\alpha^{\bfQ}(u,q)| + |\widetilde f_\alpha^{\bfQ}(u,q) - \widetilde g^{\bfQ}(u,q)| + |\widetilde g^{\bfQ}(u,q) - g^\bfQ(\alpha)| \nonumber \\
&& + |g^\bfQ(\alpha) - f_\alpha^\bfQ(\alpha)| \nonumber \\
&&\qquad = I + II + III + IV.
\eeam
We treat the four terms in \eqref{eq:cons:dec} separately. 
\par 
Regarding term $III$ in \eqref{eq:cons:dec}, note that
\beao 
\lefteqn{
|\widetilde g^{\bfQ}(u,q) - g^\bfQ(\alpha)|}\\
&\le &
|\widetilde g^{\bfQ}(u,q) - 
\widetilde g^{\bfQ}(1,q)| 
+ 
|\widetilde g^{\bfQ}(1,q) - 
\widetilde g^{\bfQ}(1,\alpha)|
+ 
|\widetilde g^{\bfQ}(1,\alpha) - g^\bfQ(\alpha)|  \\
&=& |\widetilde g^{\bfQ}(u,q) - 
\widetilde g^{\bfQ}(1,q)| 
+  
|\widetilde g^{\bfQ}(1,q) - 
\widetilde g^{\bfQ}(1,\alpha)| +
o_\P(1), \quad n \to \infty,
\eeao 
such that the last relation holds by an application of Lemma 8.5. in \cite{buritica:mikosch:wintenberger:2021}.
Moreover, note that $g$ is a bounded function and with probability zero $Y\bfQ$ belongs to the points of discontinuity of $g$. 
Moreover, this implies that $g$ can be approximated by a monotone sequence of Lipschitz-continuous functions. 
Therefore, we can assume without loss of generality that $g$ is Lipschitz-continuous. 
In this case note 
\beao 
\lefteqn{
|\widetilde g^{\bfQ}(u,q) - g^\bfQ(\alpha)|}\\
&\le &
(\widetilde g^{\bfQ}(u,q)  - 
\widetilde g^{\bfQ}(1,q) )
+ 
(\widetilde g^{\bfQ}(1,q) - 
\widetilde g^{\bfQ}(1,\alpha))
+ 
|\widetilde g^{\bfQ}(u,q) - 
\widetilde g^{\bfQ}(1,q)| \\
&\le & \eta (u_0^{-\alpha} - 1) \tilde 1^\bfQ(u_0,q_0) + 
(\widetilde g^{\bfQ}(1,q_0) - 
\widetilde g^{\bfQ}(1,\alpha))
+ 
|\widetilde g^{\bfQ}(u,q) - 
\widetilde g^{\bfQ}(1,q)|,
\eeao 
where we have also used here the monotonicity of the function $q \mapsto \1(\|\bfx_t\|_q > 1)$.
Then there exists $u_0, q_0$, such that $0 <  u_0^{-\alpha} - 1 < \eta^{-1} \epsilon/3$ and $ 0 < c(\alpha) - c(q_0) < \epsilon/3$, $u_0^{-\alpha} c(q_0) < 2$, and $1 > u > u_0$,  $ \alpha > q> q_0$. This last assertion follows by continuity and monotonicity of the functions $u \mapsto u^{-\alpha}$, and also of $q \mapsto c(q)$. This last is granted by the fact that $c(q) = \E[\|\bfQ\|_q^\alpha] < \infty$ and monotonicity of the $\ell^p-$norms yields the continuity of this last function.
Similarly, we can find $n_0 \in \mathbb{N}$ such that, for $1>u > u_0, 1> q > q_0$, and $n > n_0$, 
\beao 
\lefteqn{
|\widetilde g^{\bfQ}(u,q) - g^\bfQ(\alpha)|} \\
&\le & \eta (u_0^{-\alpha} - 1) \tilde 1^\bfQ(u_0,q_0) + (g^\bfQ(1,q_0) - g^\bfQ(1,\alpha)) + \epsilon/3 \\
&\le & \eta (u_0^{-\alpha} - 1) ( u_0^{-\alpha} c(q_0) + o_\P(1))  + (c(q_0) - 1) + \epsilon/2\\
&\le & \epsilon. 
\eeao 
where the second inequality holds by an application of Lemma 8.5 in \cite{buritica:mikosch:wintenberger:2021}.
Since $\epsilon$ was chosen aribitrarily, then we conclude
$|\widetilde g^{\bfQ}(u,q) - g^\bfQ(\alpha)| \xrightarrow[]{\P} 0 $, as $(n,u,q) \to (\infty, 1, \alpha)$.
\par 
We now turn our attention to the approximations in $II$ and $IV$ in \eqref{eq:cons:dec}, which are the two of them of a similar nature.
In this case, note  
\beao 
\lefteqn{ 
\E\Big[ \sup_{
\substack{u \in 
[1 - \epsilon,
1 + \epsilon 
] , \\
q \in 
[\alpha - \epsilon,
\alpha + \epsilon 
]
}
} |\widetilde f^{\bfQ}_\alpha(u,q) - \widetilde  g^{\bfQ}(u,q)| 
\Big]}\\
&\le & 
\frac{m_n}{k_n}
\E\Big[ \sup_{u,q} f_\alpha(\mathcal{B}_t/(u x_{b_n})) \1(\|\mathcal{B}_t\|_q > (u x_{b_n})) \\
    && \times \1\big(f_\alpha(\mathcal{B}_t/(u x_{b_n})) \1(\|\mathcal{B}_t\|_q > (u x_{b_n})) > \eta \big)  \Big]\\
&\le &  
\frac{m_n}{k_n} \E\left[ \sup_{u} \left( f_\alpha(\mathcal{B}_t/(u x_{b_n}))\right)^{1+\delta} \1(\|\mathcal{B}_t\|_{q_0} > (u_0 x_{b_n})) \right]^{\frac{1}{1+\delta}}  \\
    && \times \P\big( \sup_{u} f_\alpha(\mathcal{B}_t/(u x_{b_n})) \1(\|\mathcal{B}_t\|_{q_0} > (u_0 x_{b_n})) > \eta\big)^{\frac{\delta}{1+\delta} }   \\
&\le& \frac{m_n}{k_n} \eta^{-\delta} \E\left[\sup_{u}  \left( f_\alpha(\mathcal{B}_t/(u x_{b_n}))\right)^{1+\delta} \1(\|\mathcal{B}_t\|_{q_0} > (u_0 x_{b_n})) \right].
\eeao 
Hence, an application of condition {\bf C} together with the previous bound implies $II \xrightarrow[]{\P} 0$ letting $(n,u,q) \to (\infty,1,\alpha)$, and then letting $\eta \to \infty$ in the previous inequality by the Lindeberg-type condition in \eqref{eq:cond:consistency:unbounded}.
A similar argument implies $IV \to 0$ letting $(n,u,q) \to (\infty,1,\alpha)$, and then letting $\eta \to \infty$.
\par 
To conclude, we now show that $I$ in
\eqref{eq:cons:dec} also converges to zero.
In this case, notice that by assumption {\bf S} we have 
\beao 
\lefteqn{
I} &=& |\widetilde f_q^{\bfQ}(u,q) - \widetilde f_\alpha^{\bfQ}(u,q)| 
  \; \le \;  |q-\alpha|  \sup_{\substack
  { q  \in [\alpha-\epsilon, \alpha + \epsilon]
  \\u \in [1-\epsilon, 1+ \epsilon]
  }} 
 \widetilde{\tfrac{ \partial g_q}{\partial q}} |_{q = q}^\bfQ (u,q ). 
\eeao 
Therefore, 
\beao 
\E[I] &=& \E[|\widetilde f_q^{\bfQ}(u,q) - \widetilde f_\alpha^{\bfQ}(u,q)|] \\
&\le & \frac{m_n}{k_n} |q-\alpha| \P( \|\mathcal{B}_t\|_{q_0}> u_0)^{\frac{\delta}{1+\delta}}  \\
    && \times \, \E\Big[  \sup_{\substack
  { q  \in [\alpha-\epsilon, \alpha + \epsilon]\\
  u \in [1-\epsilon, 1+ \epsilon]
  }}  \left( 
{\tfrac{ \partial g_q}{\partial q}} |_{q = q} \right)^{1+\delta} (\mathcal{B}_t/(ux_{b_n})) \1( \|\mathcal{B}_t\|_q > ux_{b_n} )  \Big]^{\frac{1}{1+\delta} }. 
\eeao 
Then, applying assumption {\bf C} we conclude $I \xrightarrow[]{\P} 0$ as $(n,u,q) \to (\infty, 1, \alpha)$. 
Then, this shows, $|\widetilde f_q^{\bfQ}(u,q) - f_\alpha^\bfQ(\alpha)| \xrightarrow[]{\P}0$,
as $(n,u,q) \to (\infty, 1, \alpha)$. 
Recall we assumed that $ \widehat \alpha \xrightarrow[]{\P} \alpha$,  
and recall $q \mapsto c(q) = \E[\|\bfQ\|_q^\alpha]$ is a continuous function,
then the continuous mapping theorem implies 
\[
|\widetilde f_{\widehat \a}^{\bfQ}(u, \widehat \alpha) - f_\alpha^\bfQ(\alpha)|  \; \xrightarrow[]{\P}\;  0, \]
as $(n,u) \to (\infty,1)$.
The previous limit also holds for the functional $1(\alpha)$.
We now apply the argument in \cite{resnick:2007}, p.81 which allow us to conclude 
$\|\mathcal{B}_t\|_{\widehat \alpha, (k+1)} / x_{b_n} \xrightarrow[]{\P} 1$, as $n \to \infty$.
A final application of the continuous mapping theorem for the scaling function implies 
\[
|\widetilde f_{\widehat \a}^{\bfQ}(\|\mathcal{B}_t\|_{\widehat \alpha, (k+1)} / x_{b_n}, \widehat \alpha) - f_\alpha^\bfQ(\alpha)| \;=\; | \widehat f_{\widehat \alpha}^\bfQ -f_\alpha^\bfQ| \; \xrightarrow[]{\P}\;  0, \]
as $n \to \infty$, and this gives the desired result.
\qed


}

\section{Proofs of auxiliary results}\label{sec:appendix:C}


\subsection{Proof of Proposition~\ref{prop:coupling}}\label{subs:sec:proof:prop}\label{proof:lem:covariance}

{



We show the result for $g \in \mathcal{F}_f$ as the proof in the case $g \in \mathcal{F}_h$ follows the same line of arguments. 
Let $u_0 = 1- \epsilon < 1 < 1+ \epsilon = s_0$ and $q_0 = \a - \epsilon < \a < \a + \epsilon = q_0^\prime$.
We argue using a coupling argument and design recursively coupled blocks $(\mathcal{B}_t^*)_{1\le t\le m_n}$ as follows: for every $t=1,\ldots ,m_n$, we apply the maximal coupling Theorem 5.1 in \cite{rio:2017} to create the block $\mathcal{B}_t^*$
independent of the past blocks  $(\mathcal{B}_j,\mathcal{B}_j^*)_{j<t-1}$, distributed as $\mathcal B_1$, 
and such that 
\beam \label{eq:maximal:coupling}
\P\big(
\mathcal{B}_t \not = \mathcal {B}^*_t
\big) = \beta_{b_n}, \quad t =1,2,\dots,m_n\,,
\eeam 
This construction yields a 1--dependent sequence $(\mathcal{B}_t^*)_{1\le t\le m_n}$.
In particular, this implies $(\mathcal{B}_t^*)_{1\le t\le m_n, t \text{ even } }$ and $(\mathcal{B}_t^*)_{1\le t\le m_n, t \text{ odd } }$ are two sequences each one consisting of independent blocks with the same distribution as $\mathcal B_1$.
Now, applying the Markov inequality of order $2$ yields
\beao
\lefteqn{\P\Big( \sup_{u,q}   \sqrt{k_n}\, | \widetilde{g}^\bfQ(u,q) -  \widetilde{g}_{,*}^\bfQ(u,q)| > \delta \Big)}\\
&\le&\dfrac{1}{k_n\delta^2}\E\Big[ \sup_{u,q}
\Big( \sum_{t=1}^{m_n}(g(\mathcal{B}_{t}/(u\, x_{b_n}) )\1( \|\mathcal{B}_{t}\|_q > (u\, x_{b_n}) )\\
&&-g(\mathcal{B}^*_{t}/(u\, x_{b_n}) )\1( \|\mathcal{B}^*_{t}\|_q > (u\, x_{b_n}) ))\Big)^2\Big]\,\\
&\le&\dfrac{2}{k_n\delta^2}\E\Big[ \sup_{u,q}
\Big( \sum_{t=1, t \text{ odd } }^{m_n}(g(\mathcal{B}_{t}/(u\, x_{b_n}) )\1( \|\mathcal{B}_{t}\|_q > (u\, x_{b_n}) )\\
&&-g(\mathcal{B}^*_{t}/(u\, x_{b_n}) )\1( \|\mathcal{B}^*_{t}\|_q > (u\, x_{b_n}) ))\Big)^2\Big]\,. \\
&& + \dfrac{2}{k_n\delta^2}\E\Big[ \sup_{u,q}
\Big( \sum_{t=2, t \text{ even } }^{m_n}(g(\mathcal{B}_{t}/(u\, x_{b_n}) )\1( \|\mathcal{B}_{t}\|_q > (u\, x_{b_n}) )\\
&&-g(\mathcal{B}^*_{t}/(u\, x_{b_n}) )\1( \|\mathcal{B}^*_{t}\|_q > (u\, x_{b_n}) ))\Big)^2\Big]
\eeao
Both terms can be handled in a similar manner so we focus on the first one. 
We develop the square and obtain a diagonal term
\beao
\dfrac{m_n}{k_n}\E\Big[\sup_{u,q}(g(\mathcal{B}_{1}/(u\, x_{b_n}) )\1( \|\mathcal{B}_{1}\|_q > (u\, x_{b_n}) )-g(\mathcal{B}^*_{1}/(u\, x_{b_n}) )\1( \|\mathcal{B}^*_{1}\|_q > (u\, x_{b_n}) ))^2\Big] 
\eeao
Note that, denoting $G$ the envelope function,
\beam \label{eq:G:envelope}
  G(\mathcal B_1/x_{b_n}) & := & \sup_{u\in [u_0,s_0],q \in [q_0,q_0^\prime] }g(\mathcal B_1/(u x_{b_n}))\1(\|\mathcal{B}_1\|_q > (u x_{b_n}))\\
&=& g(\mathcal B_1/(u_0 x_{b_n}))\1(\|\mathcal{B}_1\|_{q_0} > (u_0 x_{b_n})). \nonumber 
\eeam  
This means we now need to control 
\beao
\lefteqn{\dfrac{m_n}{k_n}\E\Big[\sup_{u,q}(g(\mathcal{B}_{1}/(u\, x_{b_n}) )\1( \|\mathcal{B}_{1}\|_q > (u\, x_{b_n}) )}\\
&& -g(\mathcal{B}^*_{1}/(u\, x_{b_n}) )\1( \|\mathcal{B}^*_{1}\|_q > (u\, x_{b_n}) ))^2\1(\mathcal{B}_{1}\neq\mathcal{B}_{1}^*) \Big]\\
&\le& \dfrac{2m_n}{k_n}\E[G(\mathcal{B}_{1}/x_{b_n})^2\1(\mathcal{B}_{1}\neq\mathcal{B}_{1}^*)]
\eeao
\par 
We develop the square and obtain a diagonal term
\beao
\lefteqn{
\dfrac{m_n}{k_n}\E\Big[\sup_{u,q}(g(\mathcal{B}_{1}/(u\, x_{b_n}) )\1( \|\mathcal{B}_{1}\|_q > (u\, x_{b_n}) )} \\
&& -g(\mathcal{B}^*_{1}/(u\, x_{b_n}) )\1( \|\mathcal{B}^*_{1}\|_q > (u\, x_{b_n}) ))^2\1(\mathcal{B}_{1}\neq\mathcal{B}_{1}^*)\Big]\\
&\le&
2 \E[G(\mathcal{B}_{1}/x_{b_n})^2\1(\mathcal{B}_{1}\neq\mathcal{B}_{1}^*)]\\
&\le& \dfrac{m_n}{k_n}\E[G(\mathcal{B}_{1}/x_{b_n})^{2+\delta}]^{2/(2+\delta)}\P(\mathcal{B}_{1}\neq\mathcal{B}_{1}^*)^{\delta/(2+\delta)}\\
&\le& \dfrac{2 m_n}{k_n}\P(\|\mathcal{B}_1\|>x_{b_n})^{2/(2+\delta)}\beta_{b_n}^{{\delta/(2+\delta)}}\sim 2 (m_n\beta_{b_n}/k_n)^{\delta/(2+\delta)},
\eeao
as $n \to \infty$, if $G$ satisfies the assumption {\bf L}.

We now focus on the crossed terms in the development of the square.
Note we assume without loss of generality that $g$ is non-negative by considering separately the positive and negative parts. Then note that for $j \not = t$, we obtain
\beao 
&&\E\Big[\sup_{u,q} \Big( g(\mathcal B_t/(ux_{b_n})) \1(\|\mathcal B_t\|_q > (u x_{b_n}))
- g(\mathcal B^*_t/(ux_{b_n})) \1(\|\mathcal B^*_t\|_q > (u x_{b_n}))
\Big)\\
&&\times \Big( g(\mathcal B_j/(ux_{b_n})) \1(\|\mathcal B_j\|_q > (u x_{b_n}))
- g(\mathcal B^*_j/(ux_{b_n})) \1(\|\mathcal B^*_j\|_q > (u x_{b_n}))
\Big)
\Big]
\eeao 
Moreover, note 
\beao 
\lefteqn{\dfrac{m_n^2}{k_n} \E\Big[\sup_{u,q} \Big( g(\mathcal B_1/(ux_{b_n})) \1(\|\mathcal B_1\|_q > (u x_{b_n}))}\\
&& \qquad 
- g(\mathcal B^*_1/(ux_{b_n})) \1(\|\mathcal B^*_1\|_q > (u x_{b_n}))
\Big)\Big]^2\\
&\le& 
\dfrac{m_n^2}{k_n} \E\Big[\sup_{u,q} \Big( g(\mathcal B_1/(ux_{b_n})) \1(\|\mathcal B_1\|_q > (u x_{b_n}))\\
&& \qquad 
- g(\mathcal B^*_1/(ux_{b_n})) \1(\|\mathcal B^*_1\|_q > (u x_{b_n}))
\Big)^2\Big] \\
&& \times \P( \mathcal{B}_1 \neq \mathcal{B}^*_1) \\
&\le &  \frac{2 m_n^2}{k_n} \E[G(\mathcal{B}_{1}/x_{b_n})^2]
\P( \mathcal{B}_1 \neq \mathcal{B}^*_1)\\
&\sim& m_n \beta_{b_n},
\eeao 
as $n\to \infty$. 
Therefore, the diagonal term can be computed as 
\beao 
\lefteqn{ \frac{2 m_n}{k_n} \sum_{\substack{j=3 \\ j \text{ odd} } }^{m_n}\E\Big[\sup_{u,q} \Big| g(\mathcal B_1/(ux_{b_n})) \1(\|\mathcal B_1\|_q > (u x_{b_n}))}\\
&& \qquad 
- g(\mathcal B^*_1/(ux_{b_n})) \1(\|\mathcal B^*_1\|_q > (u x_{b_n}))
\Big|  \\
&&\times \Big| g(\mathcal B_j/(ux_{b_n})) \1(\|\mathcal B_j\|_q > (u x_{b_n}))\\
&& \qquad 
- g(\mathcal B^*_j/(ux_{b_n})) \1(\|\mathcal B^*_j\|_q > (u x_{b_n}))
\Big|
\Big] \\
&= & \frac{2 m_n}{k_n} \sum_{\substack{j=3 \\ t \text{ odd} } }^{m_n}  \Cov\Big(\sup_{u,q} \Big| g(\mathcal B_1/(ux_{b_n})) \1(\|\mathcal B_1\|_q > (u x_{b_n}))\\
&& \qquad 
- g(\mathcal B^*_1/(ux_{b_n})) \1(\|\mathcal B^*_1\|_q > (u x_{b_n}))
\Big| \\
&&\times \sup_{u,q} \Big| g(\mathcal B_j/(ux_{b_n})) \1(\|\mathcal B_j\|_q > (u x_{b_n}))\\
&& \qquad 
- g(\mathcal B^*_j/(ux_{b_n})) \1(\|\mathcal B^*_j\|_q > (u x_{b_n}))
\Big|
\Big) + o(1), 
\eeao 
as $n \to \infty.$
where in the last equality we added and subtracted the centering term yielding a covariance. 

Finally, by a direct corollary of Theorem 1.1. in \cite{rio:2017}, directly stated in equation (1.12b) therein, yields that the remaining term of the development on the crossed-terms is of the form
\beao 
\lefteqn{ \frac{2 m_n}{k_n} \sum_{\substack{j=3 \\ j \text{ odd} } }^{m_n}
\beta_{(j-2)b_n}^{\tfrac{\delta}{2+\delta}}} \\
&& \times \E\Big[ \sup_{u,q} \Big| g(\mathcal B_j/(ux_{b_n})) \1(\|\mathcal B_j\|_q > (u x_{b_n}))\\
&& \qquad - g(\mathcal B^*_j/(ux_{b_n})) \1(\|\mathcal B^*_j\|_q > (u x_{b_n}))
\Big|^{2+\delta} \Big]^{\tfrac{2}{2+\delta}} \\
&\le &  c\, \frac{ m_n}{k_n} \sum_{\substack{j=3 \\ j \text{ odd} } }^{m_n}
\beta_{(j-2)b_n}^{\tfrac{\delta}{2+\delta}}  \E[ G(\mathcal{B}_1/x_{b_n})^{2+\delta} ]^{\tfrac{2}{2+\delta}} \\
&\sim& c\, \sum_{\substack{j=3 \\ j \text{ odd} } }^{m_n}
\Big( \frac{m_n\beta_{(j-2)b_n}}{k_n} \Big)^{\tfrac{\delta}{2+\delta}},
\eeao 
as $n \to \infty$. 
The above inequality can be extended to bounded random variables letting $\delta \to \infty$. 
Then, to sum up, provided ${\bf MX}_\beta$ holds, we obtain
\beao 
\P\Big( \sup_{u,q}   \sqrt{k_n}\, | \widetilde{g}^\bfQ(u,q) -  \widetilde{g}_{,*}^\bfQ(u,q)| > \delta \Big) \to 0, \quad n \to \infty,
\eeao 
and this yields the desired result and concludes the proof of Proposition~\ref{prop:coupling}.

\qed 
}

{

\subsection{Proof of Lemma~\ref{proof:cov:iid:blocks}}\label{proof:lem:iid:blocks}

We start by showing that if $g:\tilde{\ell}^p \to \mathbb{R}$ satisfies ${\bf L}$ then 
\beam \label{eq:rihgt:hand:side}
\frac{m_n}{k_n}\E[g\big(\mathcal{B}_1/(v x_{b_n})\big)\1\big(\|\mathcal{B}_1\|_q > v x_{b_n}\big)] &\to& v^{-\alpha} \frac{c(q)}{c(p)}\E[g(Y \bfQ^{(q)})],
\eeam 
as $n \to \infty$.
Note that if $g$ is a bounded function, then the previous relation follows straightforwardly from Proposition~\ref{prop:existence:cluster:process}. 
If $g$ is not bounded, then note that for $\delta > 0$ as in \eqref{eq:cond:lind:1}, and for all $\eta > 0$,
\beam \label{eq:truncating:function}
\lefteqn{
\frac{m_n}{k_n} \E[g\big(\mathcal{B}_1/(v x_{b_n})\big)\1\big(\|\mathcal{B}_1\|_q > v x_{b_n}\big)] } \nonumber \\
    &=& \frac{m_n}{k_n} \E[g\big(\mathcal{B}_1/(v x_{b_n})\big)\1\big(\|\mathcal{B}_1\|_q > v x_{b_n}\big) \1(g\big(\mathcal{B}_1/(v x_{b_n})\big)\1\big(\|\mathcal{B}_1\|_q > v x_{b_n}\big)  > \eta) ] \nonumber \\
    && + \eta \, \frac{m_n}{k_n} \P(g\big(\mathcal{B}_1/(v x_{b_n})\big)\1\big(\|\mathcal{B}_1\|_q > v x_{b_n}\big)  > \eta ) \nonumber \\
    && +
    \frac{m_n}{k_n} \E[g\land \eta  \big(\mathcal{B}_1/(v x_{b_n})\big) \1\big(\|\mathcal{B}_1\|_q > v x_{b_n}\big) ] \nonumber \\
    &&\qquad \qquad = I + II + III.
\eeam
Applying Proposition~\ref{prop:existence:cluster:process} we have $III \to c(q) v^{-\alpha}\, \E[g\land \eta(Y \bfQ^{(q)})]/c(p)$, as $n \to \infty$, and letting $\eta \to \infty$ we obtain the right-hand side term in \eqref{eq:rihgt:hand:side}. Hence, it remains to show that $I+ II \to 0$, letting $n \to \infty$, and then $\eta \to \infty$.
\par 
Regarding term $I$ in \eqref{eq:truncating:function}, 
\beao 
\lefteqn{I}\\
    &=& \E\big[g\big(\mathcal{B}_1/(v x_{b_n})\big)\1\big(\|\mathcal{B}_1\|_q > v x_{b_n}\big) \1(g\big(\mathcal{B}_1/(v x_{b_n})\big)\1\big(\|\mathcal{B}_1\|_q > v x_{b_n}\big)  > \eta) \big] \\
&\le& \frac{m_n}{k_n}\E\big[ \big(g\big(\mathcal{B}_1/(v x_{b_n})\big)\big)^{1+\delta} \1 ]^{\frac{1}{1+\delta}} 
\P\big( g\big(\mathcal{B}_1/(v x_{b_n})\big)\1\big(\|\mathcal{B}_1\|_q > v x_{b_n}\big) > \eta \big)^{\frac{\delta}{1+\delta}}\\
&\le& \frac{m_n}{k_n} \eta^{-\delta} \E\big[ \big(g\big(\mathcal{B}_1/(v x_{b_n})\big))^{1+\delta}\1\big(\|\mathcal{B}_1\|_q > v x_{b_n}\big)  ]\,,
\eeao 
and we see that by assumption {\bf L} that $I \to 0$ letting first $n \to \infty$ and lastly $\eta \to \infty$. 
Finally, for term $II$ in \eqref{eq:truncating:function}, we apply a Markov inequality which yields 
\beao 
II &=& \eta \frac{m_n}{k_n} \P\big( g\big(\mathcal{B}_1/(v x_{b_n})\big)\1\big(\|\mathcal{B}_1\|_q > v x_{b_n}\big) > \eta \big)\\
&\le& \eta^{-\delta} \frac{m_n}{k_n} \E\big[ \big(g\big(\mathcal{B}_1/(v x_{b_n})\big))^{1+\delta}\1\big(\|\mathcal{B}_1\|_q > v x_{b_n}\big)  ],
\eeao 
and we conclude also for term $II$ that $II \to 0$ letting $n \to \infty$ and then $\eta \to \infty$. 
Overall this shows \eqref{eq:rihgt:hand:side} holds.
\par 
We now consider the functional $h$ in \eqref{eq:func:hill} defining the Hill estimator. 
Assume that $k_n/k_n^\prime \to \kappa$ and $\kappa > 0$.
In this case notice that since $(k_n)$ and $(k^\prime_n)$  satisfy \eqref{eq:k:3} and \eqref{eq:k:4}, respectively, then 
\beao 
\frac{m_n}{k_n^\prime} \sim \frac{k_n}{ k_n^\prime \P(\|\mathcal B_1\|_p > x_{b_n})} &\sim&
\frac{k_n}{k_n^\prime} \frac{ \P(|\bfX_1| > x^\prime_{b_n}) }{ c(p) \P(|\bfX_1| > x_{b_n}) } \frac{1}{b_n \P(|\bfX_1| > x^{\prime}_{b_n})}\\
&\sim&\frac{1}{b_n c(p) \P(|\bfX_1| > x^{\prime}_{b_n})}, \quad n \to \infty,
\eeao 
and this holds since $k_n/k_n^\prime \to \kappa$, and $\P(|\bfX_1| > x_{b_n}^\prime)/\P(|\bfX_1| > x_{b_n}) \to \kappa^{-1} $, as $n \to \infty$, due to \eqref{eq:rationx}, and regular variation of the variable $|\bfX_1|$.
Then,
\beam\label{eq:right:handside:hill}
\frac{m_n}{k^\prime_n} \E[h\big(\mathcal{B}_1/(v x^\prime_{b_n})\big)]  &=& \frac{m_n}{k^\prime_n} \E[h\big(\mathcal{B}_1/(v x^\prime_{b_n})\big)\1\big(\|\mathcal{B}_1\|_\infty > v x^\prime_{b_n}\big)] \nonumber \\
&\to& v^{-\alpha}c(\infty) \,  \E[h(Y \bfQ^{(\infty)})], \quad n \to \infty, \nonumber \\
&=& v^{-\alpha}  \E[h(Y \bfQ )]. 
\eeam 
such that the first limit holds applying Proposition~\ref{prop:existence:cluster:process}, and the last equality follows from the change-of-norms equation in \eqref{eq:change:of:norms}.
Moreover by similar arguments we can also show that $m_n\E[e(\mathcal B_1/(v x^\prime_{b_n})]/k^\prime_n \to v^{-\a} \E[e(Y\bfQ)]$, for the exceedances functional in \eqref{eq:func:exceedances}, and also
$m_n\P(\|\mathcal B_1\|_q > (v x'_{b_n}) )/k^\prime_n \to v^{-\a} c(q)$, as $n \to \infty$. 
\par 
We now focus on computing the covariance of blocks.
Denote $\kappa_n = k_n/k^\prime_n$, and denote $c_n = x_{b_n}/x^\prime_{b_n}$. Note that by our assumptions $\kappa_n \to \kappa > 0$ and by Equation~\eqref{eq:rationx} we also have $c_n \to (\kappa/{c(p)})^{-1/\alpha}$, as $n \to \infty.$
We start by considering the case where $g:\tilde{\ell}^p \to \mathbb{R}$ is a functional satisfying {\bf L}, and $h$ is the functional defining the Hill estimator in \eqref{eq:func:hill}.
{
Moreover,
\beam \label{eq:littleo}
\lefteqn{ 
k_n \, \cov\big( \widetilde g^\bfQ_{,*}(v,q),   \widetilde h^\bfQ_{,*}(v)\big) 
} \nonumber \\
    &=& \frac{k_n m_n }{k_n k_n^\prime}\E\Big[ g\big(\mathcal{B}_1/(v x_{b_n}) \big) h\big( \mathcal{B}_1/(u x^\prime_{b_n}) \big) \1\big(\|\mathcal{B}_1\|_q > v x_{b_n}\big)  \Big] \nonumber \\
    && - \, \frac{k_n m_n }{k_n k_n^\prime}\E\Big[ g\big(\mathcal{B}_1/(v x_{b_n}) \big) \1\big(\|\mathcal{B}_1\|_q > v x_{b_n}\big) \big)  \Big]\E\Big[h\big( \mathcal{B}_1/(u x^\prime_{b_n}) \big)\Big] + o(1), \nonumber \\
    &&= \frac{m_n \kappa_n }{k_n}\E\Big[ g\big(\mathcal{B}_1/(v x_{b_n}) \big) h\big( \mathcal{B}_1 c_n /(u x_{b_n} ) \big) \1\big(\|\mathcal{B}_1\|_q > v x_{b_n}\big)  \Big] + o(1),
\eeam 
as $n \to \infty$, where the first equality follows from Lemma~\ref{lem:covariance:1} and the} last equality follows from \eqref{eq:rihgt:hand:side} and \eqref{eq:right:handside:hill} and the fact that $k_n/m_n \sim \P(\|\mathcal{B}\|_p > x_{b_n})  \to 0$, as $n \to \infty$.
Moreover, by similar calculations as in \eqref{eq:truncating:function} we can see that it suffices to show, for  all $\eta > 0$,
\beam \label{eq:interm:step}
\lefteqn{ \frac{m_n \kappa_n }{k_n}  \E\Big[ g \land \eta \big(\mathcal{B}_1/(v x_{b_n}) \big) h \land \eta \big( \mathcal{B}_1/(u x^\prime_{b_n}) \big) \1\big(\|\mathcal{B}_1\|_q > v x_{b_n}\big)  \Big]} \nonumber \\
    &\to&  v^{-\alpha} \kappa \frac{c(q)}{c(p)} \E[ g \land \eta( Y\bfQ^{(q)})  h \land \eta( Y\bfQ^{(q)} {v (\kappa/c(p))^{-1/\alpha}} /u  )],
\eeam 
as $n \to \infty$, and then we conclude by monotone convergence by letting $\eta \to \infty$ in the right-hand side of the previous equation. 
\par 
We now focus on showing \eqref{eq:interm:step} holds. For this note that the function $c \mapsto h \land \eta ((\bfx_t)/c)$ is a non-increasing function. 
Then, 
relying on \eqref{eq:rationx}
we obtain that, for all $\epsilon > 0$, and for $n$ sufficiently large,
\beao 
\lefteqn{ \frac{m_n}{k_n}  \E\Big[ g \land \eta \big(\mathcal{B}_1/(v x_{b_n}) \big) h \land \eta \big( \mathcal{B}_1/(u x^\prime_{b_n}) \big) \1\big(\|\mathcal{B}_1\|_q > v x_{b_n}\big)  \Big] } \\
& =  & 
 \frac{m_n }{k_n}  \E\Big[ g \land \eta \big(\mathcal{B}_1/(v x_{b_n}) \big) h \land \eta \big( \mathcal{B}_1 c_n /(u x_{b_n}) \big) \1\big(\|\mathcal{B}_1\|_q > v x_{b_n}\big)  \Big] \\
&\le & 
 \frac{m_n }{k_n}  \E\Big[ g \land \eta \big(\mathcal{B}_1/(v x_{b_n}) \big) h \land \eta \big( \mathcal{B}_1( { (\kappa/c(p))^{-1/\alpha}} +\epsilon)/(u  x_{b_n}) \big) \1\big(\|\mathcal{B}_1\|_q > v x_{b_n}\big)  \Big] \\
 &\to& v^{-\alpha} \frac{c(q)}{c(p)} \E[ g \land \eta( Y\bfQ^{(q)})  h \land \eta( Y\bfQ^{(q)}  v({(\kappa/c(p))^{-1/\alpha}}  + \epsilon)/u  )], 
\eeao 
as $n \to \infty$, 
where the last limit is again an application of Proposition~\ref{prop:existence:cluster:process}. 
In a similar manner we can also obtain a lower bound with respect to $\epsilon$.
Finally, notice these bounds hold for arbitrary $\epsilon$, thus, letting $\epsilon \downarrow 0$ we can conclude that \eqref{eq:interm:step} holds. 
To sum up, the relation in \eqref{eq:littleo}  together with similar calculations as in \eqref{eq:truncating:function}  allow us to establish that \eqref{eq:cov:iid} holds. 
Furthermore, the calculations for \eqref{eq:cov:iid:2} and \eqref{eq:cov:iid:3} follow the same line of arguments thus we omit the details. 

\qed
}


\subsection{Proof of Lemma~\ref{lem:covariance:1}}\label{sec:lem:covariance:1}
We assume without loss of generality that the functions $g,f :\tilde{\ell}^p \to \mathbb{R}$ satisfying ${\bf L}$ take non-negative values.
Notice that for all $t > 0$
\beam 
\frac{m_n}{k_n}\Cov( g(x_{b_n}^{-1}\mathcal{B}_1)f(x_{b_n}^{-1}\mathcal{B}_t)) 
&=& \frac{m_n}{k_n} \E[g(x_{b_n}^{-1}\mathcal{B}_1)f(x_{b_n}^{-1}\mathcal{B}_t)] + o(1),
\eeam 
as $n \to \infty$, since $m_n\, \E[f^2(x_{b_n}^{-1}\mathcal{B}_1)]/k_n \to \E[f^2(Y\bfQ^{(p)})]$, as $n \to  \infty$, by the moment assumption in \eqref{eq:cond:lind:1}. Moreover,
\beao 
\lefteqn{I \quad = \quad \frac{m_n}{k_n}\E[g(x_{b_n}^{-1}\mathcal{B}_1)\1(g(x_{b_n}^{-1}\mathcal{B}_1) > \eta) f(x_{b_n}^{-1}\mathcal{B}_t)]}\\
&\le&\frac{m_n}{k_n}\E[g(x_{b_n}^{-1}\mathcal{B}_1)^2\1(g(x_{b_n}^{-1}\mathcal{B}_1) > \eta)]^{1/2}\E[ f(x_{b_n}^{-1}\mathcal{B}_1)^2]^{1/2} \\
&\le& \frac{m_n}{k_n} (\eta)^{-\delta/2}\E[g(x_{b_n}^{-1}\mathcal{B}_1)^{2+\delta}]^{\tfrac{1}{2+\delta}}\E[ g(x_{b_n}^{-1}\mathcal{B}_1)^{2+\delta}]^{\tfrac{\delta}{2(2+\delta)}} \E[ f(x_{b_n}^{-1}\mathcal{B}_1)^2]^{1/2}.
\eeao 
We deduce from Equation~\eqref{eq:cond:lind:1} that $I = O(\eta^{-\delta/2})$, as $n \to \infty$, thus this term is negligible letting $n\to \infty$, and then $\eta \to  \infty$. Therefore,
\beam\label{eq:limit:cond}
\lefteqn{\lim_{n \to  \infty}\frac{m_n}{k_n}\E[ g(x_{b_n}^{-1}\mathcal{B}_1)f(x_{b_n}^{-1}\mathcal{B}_t)]} \nonumber \\
&=& \lim_{\eta \to \infty} \lim_{n \to \infty} \frac{m_n}{k_n} \E[(g\land \eta)(x_{b_n}^{-1}\mathcal{B}_1) (f\land \eta)(x_{b_n}^{-1}\mathcal{B}_t)].
\eeam
We conclude that it suffices to establish \eqref{eq:finite:covariance} for continuous bounded functions. We consider Lipschitz-continuous bounded functions $f,f^\prime : \tilde{\ell}^p \to \mathbb{R}$ in $\mathcal{G}_{+}(\tilde{\ell^p})$. The extension to continuous bounded functions then holds following a Portmanteau argument.
Now notice
\beao
\lefteqn{\frac{m_n}{k_n}\E[f(x_{b_n}^{-1}\mathcal{B}_1) f^\prime(x_{b_n}^{-1} \mathcal{B}_{t })]} \\
&=& \frac{m_n}{k_n} \E[ f(\underline{x_{b_n}^{-1}\mathcal{B}_1}_\epsilon)f^\prime(x_{b_n}^{-1} \mathcal{B}_{t })] + \frac{m_n}{k_n}\E[(f(x_{b_n}^{-1}\mathcal{B}_1) - f(\underline{x_{b_n}^{-1}\mathcal{B}_1}_\epsilon)) f^\prime(x_{b_n}^{-1} \mathcal{B}_{t })] 
\eeao
For the second term, we rely on condition ${\bf CS}_p$ since $f,f^\prime$ are bounded Lipschitz-continuous functions. In this case, the second term is negligeable letting first $n \to \infty$ and then $\epsilon \downarrow 0$. Similarly, we deduce from condition ${\bf CS}_p$ that for $t = 2,3,\dots$, and for all $\epsilon >0$, 
\beam\label{eq:f:to:epsilon}
        \frac{m_n}{k_n}\E[f(x_{b_n}^{-1}\mathcal{B}_1) f^\prime(x_{b_n}^{-1} \mathcal{B}_{t })]
        &\sim& \frac{m_n}{k_n}\E[f(\underline{x_{b_n}^{-1}\mathcal{B}_1}_\epsilon) f^\prime(\underline{x_{b_n}^{-1} \mathcal{B}_{t }}_\epsilon )],
\eeam
if we let $n \to \infty$, and then $\epsilon \downarrow 0$. Thus it suffices to show that
\beao
    \lim_{n \to \infty}\frac{m_n}{k_n}\Cov(f_\epsilon(x_{b_n}^{-1}\mathcal{B}_{1}),f^\prime_\epsilon(x_{b_n}^{-1}\mathcal{B}_{t} )) 
    &=& 0\,,\qquad t=2,3,\dots\,.
\eeao 
 Consider the sequence $(\ell_n)$ satisfying the condition of Lemma~\ref{lem:covariance:1}, and recall the notation $\mathcal{B}_{t,\ell} = \bfX_{(t-1)b + [1,b-\ell]}$. We can use similar steps as in the proof of Lemma~\ref{lem:mixing} and replace $\mathcal{B}_1$ by $\mathcal{B}_{1,\ell}$ inside the covariance term. For this step we require
 $\ell_n/b_n \to 0$ as $n \to \infty$. Moreover, 
\beao
    \frac{m_n}{k_n}\Cov(f_\epsilon(x_{b_n}^{-1}\mathcal{B}_{1,\ell}),f^\prime_\epsilon(x_{b_n}^{-1}\mathcal{B}_{t} )) 
    &\le& 2 \|f\|_\infty \| f^\prime \|_\infty \frac{m_n}{k_n} \beta_{\ell +  (t -2)b}.
\eeao 
{
Finally, we have $m_n\beta_{\ell_n}/k_n \to 0$, as $n\to\infty$.
In the case where $g$ or both $g,h$, equal one of the functionals defining the Hill and exceedances estimator, i.e., $h, e,$ in \eqref{eq:func:hill} and \eqref{eq:func:exceedances}, the proof follows similar steps and thus we omit the details.
In this case we use \eqref{eq:interm:step} instead of \eqref{eq:limit:cond} in the first step of the proof. 
}
\qed 


\subsection{Proof of Lemma~\ref{lem:pnorm:VCclass}}\label{sec:proof:lem:pnorm:VCclass} 
An application of Riesz-Thorin Theorem implies that for every fixed $\bfx \in \tilde{\ell}^{q_0}$, the map 
\beam  \label{eq:log:q:nrom}
1/q & \mapsto & \log \|\bfx\|_q, \quad q \in (q_0,q_0^\prime).
\eeam 
is convex. 
Denote $\bfx \mapsto \psi_{q}(\bfx)$ the derivative of the function $1/q \mapsto \log \|\bfx\|_q$ which is defined by
\beao 
\psi_{q}(\bfx)
&=& \log(\|\bfx\|_{q}^{q}) - \sum_{t\in \mathbb{Z}} \frac{|\bfx_t|^{q}\log(|\bfx_t|^{q})}{\|\bfx\|_{q}^{q}} \\
&=& \sum_{t\in \mathbb{Z}} \frac{|\bfx_t|^{q}\log(\|\bfx\|_{q}^{q}/|\bfx_t|^{q})}{\|\bfx\|_{q}^{q}},
\eeao 
which corresponds to the derivative of the log $\ell^q-$norm function as in \eqref{eq:log:q:nrom}. 
It is easy to see that $\psi_q(\bfx) \ge 0$.
Moreover, $1/q \mapsto \psi_q(\bfx)$ is a non-decreasing function. 
To verify this, it is enough to compute the derivative of the function $1/q \mapsto \psi_q(\bfx)$. 
Moreover, the convexity of the mapping in~\eqref{eq:log:q:nrom} implies, for all $q_1, q_2 \in (q_0,q_0^\prime)$,
\beao 
\log \|\bfx\|_{q_1} & \ge &
\log \|\bfx\|_{q_2} + ( q_1^{-1} - {q_2}^{-1})\psi_{q_2}(\bfx).
\eeao 
Then, rewriting this previous relation yields
\beam \label{eq:convexity:lp:norm}
\|\bfx\|_{q_1} & \ge &
\|\bfx\|_{q_2}\exp\{ ({q_1}^{-1} - {q_2}^{-1})\psi_{q_2}(\bfx)\}.
\eeam 
\par  
It is easy to see that the VC-dimension of our class is larger than two.  
For example, we can take the point with $x^1_t = 1$, only if $t = 0$, and the point $x^2_t = 1/m$, only if $t = 0,1$, and $2^{1/q_0} < m < 2^{1/q_0\prime}$. 
Then, we check readily that $\|\bfx^1\|_{q} = 1$ for all $q \in \mathbb{R}$, and $ \|\bfx^2\|_{q_0} > 1 > \|\bfx^2\|_{q_1}$.
Therefore, we conclude that our class of sets separates these two points. 
\par 
We now show that our class of sets can't shatter three different points. 
Consider the points $\bfx^1$, $\bfx^2,$ and $\bfx^3$ with values in $\tilde{\ell}^{q_0}$.
Assume that there exists $q_1,q_2 \in [q_0,q_0^\prime]$ such that $\|\bfx^1\|_{q_1} > \|\bfx^2\|_{q_1}$ and $\|\bfx^2\|_{q_2} > \|\bfx^1\|_{q_2}$. 
This means our class picks out the sets $\{\bfx^1\}$ and $\{\bfx^2\}$. 
Assume also, without loss of generality, $q_1 < q_2$. 
Actually, this means that for all $q$ such that $q_1 < q_2 \le q$, then $\|\bfx^2\|_q > \|\bfx^1\|_q$. 
Indeed, for $q \ge q_2$ Equation~\eqref{eq:convexity:lp:norm} implies 
\beao 
\|\bfx^1\|_{q} & \ge &
\|\bfx^1\|_{q_1}\exp\{ ({q}^{-1} - {q_1}^{-1})\psi_{q_1}(\bfx^1)\} \\
&\ge& \|\bfx^2\|_{q_1}\exp\{ ({q}^{-1} - {q_1}^{-1})\psi_{q_1}(\bfx^1\} \\
&\ge& \|\bfx^2\|_{q}\exp\{ ({q}^{-1} - {q_1}^{-1})(\psi_{q_1}(\bfx^1) -\psi_{q}(\bfx^2))  \}\\
&\ge& \|\bfx^2\|_{q}\exp\{ ({q}^{-1} - {q_1}^{-1})(\psi_{q_1}(\bfx^1) -\psi_{q_2}(\bfx^2))  \},
\eeao 
where the last equality holds since $1/q \mapsto \psi_{q}$ is a non-decreasing function and $q_1<q_2 \le  q$.
In particular, letting $q = q_2$ then the relation $\|\bfx^2\|_{q_2} > \|\bfx^1\|_{q_2}$ implies 
\beao 
(\psi_{q_1}(\bfx^1) -\psi_{q_2}(\bfx^2)) & > & 0, 
\eeao 
Moreover, we also have
\beao 
\|\bfx^2\|_{q} & \ge &
\|\bfx^2\|_{q_2}\exp\{ ({q}^{-1} - {q_2}^{-1})\psi_{q_2}(\bfx^2)\} \\
&\ge& \|\bfx^1\|_{q_2}\exp\{ ({q}^{-1} - {q_2}^{-1})\psi_{q_2}(\bfx^2)\} \\
&\ge& \|\bfx^1\|_{q}\exp\{ ({q}^{-1} - {q_2}^{-1})(\psi_{q_2}(\bfx^2) -\psi_{q}(\bfx^1))  \}\\
&\ge& \|\bfx^1\|_{q}\exp\{ ({q}^{-1} - {q_2}^{-1})(\psi_{q_2}(\bfx^2) -\psi_{q_1}(\bfx^1))  \},
\eeao 
but this implies $\|\bfx^2\|_q > \|\bfx^1\|_q$, for all $q$ satisfying $q_1 < q_2 \le q$. Similarly $\|\bfx^2\|_q < \|\bfx^1\|_q$, for all $q$ satisfying $q\le q_1 < q_2$.
Assume we can shatter the points $\bfx^1$, $\bfx^2,$ and $\bfx^3$. Then there exist $q_i>0$ satisfying
\beao 
\|\bfx^i\|_{q_i} > \max\{\|\bfx^j\|_{q_i},\|\bfx^l\|_{q_i}, l,j \not = i \},
\eeao 
for every permutation $ \{i,l,j\} = \{1,2,3\}$.
Then, eventually renaming the points, we can suppose without loss of generality $q_1 < q_2 < q_3$. 
In this case, we claim we can't pick out the set $\{\bfx^1,\bfx^3\}$. Indeed, for $q\ge q_2 >q_1$ then $\|\bfx^1\|_q<\|\bfx^2\|_q$ and for $q\le q_2 <q_3$ then $\|\bfx^2\|_q<\|\bfx^3\|_q$.

{\black
\section{Appendix on variance calculations}\label{sec:appendix:E}


\subsection{Proof of Lemma~\ref{lem:Hill:limit} }\label{sec:lem:hill}
Uniform convergence of the independent-block estimators towards the Gaussian process entails
\beao 
\sqrt{k_n}
\big( (\widetilde{h}^{\bfQ}(u ),
\widetilde e^{\bfQ} (1 )) 
- (u^{-\a} h^{\bfQ},1)  \big) 
&\xrightarrow[]{ d }& \,  
\big(
\mathbb{G}( h(\cdot/u),\alpha) ,\mathbb{G}(e(\cdot/1 ),\alpha)
\big),
\eeao
as $n \to \infty$. 
Now notice the identity
\beao 
\sqrt{k_n} \big(\,|\bfX/x^\prime_{b_n}|_{(\lfloor k^\prime u \rfloor)} - u^{-1/\alpha}\big)= \sqrt{k_n}( \widetilde e^{\bfQ}(u)^{\leftarrow} - (u^{-\alpha})^{\leftarrow} ) , \quad u \in [1-\epsilon, 1+\epsilon],  
 \eeao
where $|\bfX/x^\prime_{b_n}|_{(1)} \ge|\bfX/x^\prime_{b_n}|_{(2)}  \ge  \cdots \ge |\bfX/x^\prime_{b_n}|_{(n)}$, are the order statistics of the sample $(|\bfX_t/x^\prime_{b_n}|)$. Then, by an application of Vervaat's lemma,
\beao 
\sqrt{k_n} \big(\,|\bfX/x^\prime_{b_n}|_{(\lfloor k^\prime \rfloor)}  - 1\big) &\xrightarrow[]{ d }& -\, \alpha^{-1} \mathbb{G} \big(e(\cdot/1),\alpha \big),
\eeao 
as $n \to  \infty$, in particular, $|\bfX/x'_{b_n} |_{(\lfloor k^\prime  \rfloor)}\xrightarrow[]{\P}1$ as $\nto$. 
Furthermore, denoting $1/\widehat \alpha$ the Hill estimator in Equation~\eqref{eq:hill:estimator},
\beao
\sqrt{k_n}\, \Big(\,\frac{1}{\widehat \a} - \frac{1}{\alpha} \,\Big)
&=&
\sqrt{k_n}\, \alpha^{-1} \,  \big(\,\alpha \widetilde h^\bfQ(\,  
	|\bfX|_{\lfloor k^\prime \rfloor }/x^\prime_{b_n} \, ) -1\,\big) \nonumber
\\
 &= & \sqrt{k_n}\, \alpha^{-1} \, \big(\,\alpha \widetilde h^\bfQ(|\bfX/x^\prime_{b_n}|_{\lfloor k^\prime \rfloor }) - (|\bfX/x^\prime_{b_n}|_{\lfloor k^\prime \rfloor })^{-\a}\, \big) \,  \nonumber \\
&&+  \, \sqrt{k_n}\, \alpha^{-1} \, \big(\,(|\bfX|_{\lfloor k^\prime \rfloor }/x^\prime_{b_n}|)^{-\a} - 1\big) \nonumber \\
& \xrightarrow[]{d} & \alpha^{-1} \, \mathbb{G} (\alpha h(\cdot/1) - e(\cdot/1), \alpha ), \qquad n \to \infty. 
\eeao 	
Finally, an application of the Delta-method yields Equation~\eqref{eq:normality:hill} and allow us to conclude.\qed

\subsection{Proof of Lemma~\ref{lem:random:threshold}}\label{sec:proof:lem:random:threshold}

The following Lemma will be useful to proof Lemma~\ref{lem:random:threshold}. We defer its proof to the end of this Section.

\begin{lemma}[Asymptotics of $c(\widehat \a)$]\label{lem:continuity:c(p):2}
Assume the conditions of Theorem~\ref{thm:main} are satisfied and $k_n/k_n^\prime \to \kappa$,  for $\kappa >  0$, then
\beao 
\sqrt{k_n}( c(\widehat \a ) - 1) & \xrightarrow[]{ d} &  - \a \psi_\a^{\bfQ}\, \mathbb{G}(h(\cdot/1) - \alpha^{-1} e(\cdot/1), \alpha ), \quad n\to \infty,
\eeao 
{
and, for $\kappa = 0$, $\sqrt{k_n}( c(\widehat \a ) - 1) \xrightarrow[]{\P} 0$, as $n \to \infty$. 
Here 
\beam\label{eq:osu}
\psi_\alpha^\bfQ &=&\E\Big[ \sum_{t \in \mathbb{Z}} |\bfQ_t|^{\a }
\log\big( \tfrac{\|\bfQ\|_\a }{|\bfQ_t|}\big)   \Big], 
\eeam 
and $\psi_\a^{\bfQ} < \infty$ due to condition ${\bf M}$.   }
\end{lemma}

Under the assumptions of Theorem~\ref{thm:main} we have $\widehat \alpha \xrightarrow[]{\P}\alpha$, as $n \to \infty$, from an application of Lemma~\ref{lem:Hill:limit}.
Moreover,  the uniform central limit theorem on the process $\wt g^\bfQ(u,q)$ entails 
\beao 
\sqrt{ k_n} (\widetilde {1}^\bfQ(u,\widehat \a) - u^{-\a}c(\widehat \a) ) &\xrightarrow[]{d}& \mathbb G\big( 1(\cdot/u),\alpha), \quad n \to \infty.
\eeao 
{Hence, if $k_n/k_n^\prime \to \kappa$ and $\kappa > 0$, an application of Lemma~\ref{lem:continuity:c(p):2} implies}
\beao 
\lefteqn{ \sqrt{ k_n} (\widetilde {1}^\bfQ(u,\widehat \a) - u^{-\a} )}  \\
    &=& \sqrt{ k_n} (\widetilde {1}^\bfQ(u,\widehat \a) - u^{-\a}c(\widehat \a) ) - u^{-\a} \sqrt{ k_n} (1 - c(\widehat \a ) ) \\
&\xrightarrow[]{d}& \mathbb G\big( 1(\cdot/u) -  u^{-\a} \a \psi_\a^\bfQ ( h(\cdot/1) - \a^{-1} e(\cdot/1) ) \,, \alpha\big)\,,\qquad n\to\infty\,.
\eeao 
{
Instead, if $k/k^\prime \to 0$, we conclude 
\beao 
\sqrt{ k_n} (\widetilde {1}^\bfQ(u,\widehat \a) - u^{-\a} ) &\xrightarrow[]{d} \mathbb{G}\big( 1(\cdot/u), \alpha )\,,\qquad n\to\infty\,.
\eeao 
}
In addition, recall the sequence $(x_{b_n})$ satisfies the assumptions of Proposition~\ref{prop:existence:cluster:process}, and $(k_n)$ satisfies $k = k(\alpha) \sim b_n \P(|\bfX_1| > x_{b_n})$, as $n \to \infty$.  
We derive the relation, as $n \to \infty$, 
\beao 
 \sqrt{k_n}( \widetilde{1^{\bfQ}}(u , \widehat \a )^{\leftarrow} 
 - (u^{-\alpha})^{\leftarrow})  = \sqrt{k_n} \big(\,\|{\mathcal B}_1/x_{b_n}\|_{\widehat \alpha,(\lfloor k u \rfloor)} - u^{-1/\alpha} \big)+o_\P(1), 
 \eeao
for $ u \in [1-\epsilon, 1+\epsilon]$. Then, by an application of Vervaat's lemma,
\beao 
\sqrt{k_n} \big(\,\|{\mathcal B}_1/x_{b_n}\|_{ \widehat \alpha ,(\lfloor k  \rfloor)} - 1\big)
&\xrightarrow[]{ d }& -\, \alpha^{-1} \mathbb{G} \big (1(\cdot/1)) - \a \psi_\a^{\bfQ}(\cdot/1) - \a^{-1} e(\cdot/1)), \alpha \big),
\eeao 
as $n \to  \infty$.
{
These calculations hold in the case $k_n/k_n^\prime \to \kappa$ and $\kappa > 0$. Instead, if $k_n/k_n^\prime \to 0$ a similar argument allow as to conclude
\beao 
\sqrt{k_n} \big(\,\|{\mathcal B}_1/x_{b_n}\|_{ \widehat \alpha ,(\lfloor k  \rfloor)} - 1\big) &\xrightarrow[]{d}& - \alpha^{-1} \mathbb{G}(1(\cdot/1), \alpha ), \quad  n \to \infty.
\eeao 
In particular, $\|{\mathcal B}_1\|_{\widehat \alpha ,(\lfloor k\rfloor)}/x_{b_n} \xrightarrow[]{\P}1$ for every $\kappa \ge  0$.
}
\par 
Moreover, since $f_\a$ is a continuous non-increasing function, and $c(\widehat{\a}) \xrightarrow[]{\P} 1$, similar calculations entail 
\beam \label{eq:f:alpha:hat}
\lefteqn{ \sqrt{k_n}(\,\widetilde{ f_\a^\bfQ}(u, \widehat{\a}) - u^{-\a}f_\a^\bfQ \,) }\nonumber \\
    &\xrightarrow[]{d}& \mathbb{G} \big(f_\a(\cdot/u)  - f_\a^{\bfQ}  u^{-\a} \a \psi_\a^{\bfQ} \,( h(\cdot/1) - \a^{-1} e(\cdot/1))\,, \alpha \big) .
\eeam  
Overall, we conclude
\beao 
\lefteqn{ \sqrt{k_n}\, \big(\,\widehat{f_\a^\bfQ}(\widehat \a) - f^\bfQ_\a\,\big)}\\
&=&
 \sqrt{k_n}\, 
 \big(\,
 \widetilde{f_\a^\bfQ}(\, 
{ \|{\mathcal B}_1\|_{\widehat{\alpha},(\lfloor  k \rfloor )}}/{x_{b_n} \, }, \widehat \a )
  - f^\bfQ_\a\, 1 \,\big) 
\\
&=& \sqrt{k_n}\, \, 
\big(\,
 \widetilde{f_{\a}^\bfQ}(\, 
{ \|{\mathcal B}_1\|_{\widehat{\alpha},(\lfloor  k \rfloor )}}/{x_{b_n} \, }, \widehat \a )
 - 
f_\a^\bfQ \, (\|{\mathcal B}_1\|_{{\widehat{\alpha}},(\lfloor k\rfloor)}/x_{b_n})^{-\a}\, \big) \,\\
&&+  \sqrt{k_n}\, f_\a^\bfQ \, 
\big(\,
 (\|{\mathcal B}_1\|_{{\widehat{\alpha}},(\lfloor k\rfloor)}/x_{b_n})^{-\a}
 - 1 \, \big) \, \\
 &\xrightarrow[]{d}&   \mathbb G( \, f_\a(\cdot/1) - f_\a^\bfQ\,1(\cdot/1)\,,\alpha)\,,\qquad n\to \infty\,,
\eeao
and this yields the desired result. 
\qed

\subsection{Proof of Lemma~\ref{lem:continuity:c(p):2}}\label{sec:lem:continuity:cp:2}
{
To study the function $q \mapsto c(q)$ in a neighborhood of $\alpha$ we write it as
\beam  \label{eq:cq:rewritting}
c(q) &=& \E[ \|\bfQ\|_q^\alpha ] 
  \; = \; (\E[ 1/ \|\bfQ^{(q)}\|_\alpha^\alpha])^{-1}.
\eeam 
Borrowing the Taylor expansions from Remark~\ref{remark:check:S:ei} we have
\beao
\lefteqn{ 
(1/c(q) - 1)}\nonumber\\
&=&  \E[\|\bfQ^{(q)}\|_q^q - \|\bfQ^{(q)}\|_\a^\a/ \|\bfQ^{(q)}\|_\a^\a] \nonumber \\
&=& (q-\a) \E\Big[  \sum_{t \in \mathbb{Z}} |\bfQ^{(q)}_t|^{\a }
\log\big( 1/{|\bfQ^{(q)}_t|}\big)/\| \bfQ^{(q)}\|_\alpha^\alpha\Big]  + \frac{1}{2} (q-\a)^2 \nonumber  \\
    && \times
    \E\Big[ 
 \frac{1}{\|\bfQ^{(q)}\|_{q^\prime}^{q^\prime}}\sum_{t \in \mathbb{Z}} \sum_{j \in \mathbb{Z}}  
\frac{|\bfQ^{(q)}_t|^{q^\prime}}
{\|\bfQ^{(q)}\|_{q^\prime}^{q^\prime }} 
\frac{|\bfQ^{(q)}_t|^{q^\prime}}
{\|\bfQ^{(q)}\|_{q^\prime}^{q^\prime }} 
\log\big(\frac{1}{|\bfQ^{(q)}_t|}\big)\log\big(\frac{|\bfQ^{(q)}_t|}{|\bfQ^{(q)}_j|}\big)
\Big], \nonumber 
\eeao 
for $q^\prime \in [q \lor \alpha, q \land  \alpha], q \in [\alpha - \epsilon, \alpha + \epsilon],$ and $\epsilon > 0$ such that ${\bf M}$ holds.
Moreover, an application of the change-of-norms from Equation~\eqref{eq:change:of:norms} yields
\beam\label{eq:continuity:q}
\lefteqn{(1/c(q) - 1)}\nonumber\\ 
&=& (q-\a) \E\Big[ \underbrace{ \|\bfQ\|_q^{\a} \sum_{t \in \mathbb{Z}} |\bfQ_t|^{\a }
\log\big( \tfrac{\|\bfQ\|_q }{|\bfQ_t|}\big)}_{:= \psi_{q,\a}(\bfQ) }  \Big]/c(q)  + \frac{1}{2} (q-\a)^2 \nonumber  \\
    && \times
    \E\Big[ \underbrace{ 
\|\bfQ\|_q^{\a} \frac{\|\bfQ\|_{q}^{q^\prime}}{\|\bfQ\|_{q^\prime}^{q^\prime}}\sum_{t \in \mathbb{Z}} \sum_{j \in \mathbb{Z}}  
\frac{|\bfQ_t|^{q^\prime}}
{\|\bfQ\|_{q^\prime}^{q^\prime }} 
\frac{|\bfQ_t|^{q^\prime}}
{\|\bfQ\|_{q^\prime}^{q^\prime }} 
\log\big(\frac{\|\bfQ\|_q}{|\bfQ_t|}\big)\log\big(\frac{|\bfQ_t|}{|\bfQ_j|}\big)
}_{:= \psi^\prime_{q,q^\prime} (\bfQ)  }\Big]/c(q) \nonumber \\
&=& (q-\a)\E[ \psi_{q,\a}(\bfQ)  ] /c(q) + \frac{1}{2} (q-\a)^2 \E[\psi^\prime_{q,q^\prime}(\bfQ) ]/c(q).\nonumber\\
\eeam 
We start by noting that by an application of Equation~\eqref{eq:bound:logarithm} we can derive the bound
\beao 
0 \;<\;  \E[ \sup_{q \in [\alpha - \epsilon, \alpha + \epsilon]} \psi_{q,\a}(\bfQ)  ] &\le& \epsilon^{-1} \,\E[\|\bfQ\|_{\a - \epsilon}^{2\a}] \;<\;  \infty,
\eeao
and this last term is finite due to Assumption {\bf M}.}{
\par 
In what follows we verify that  
$\E[\sup_{q,q^\prime}|\psi^\prime_{q,q^\prime}|]<\infty$ for $q$ and $q^\prime$ close to $\a$. 
First note that $|\psi^\prime_{q,q^\prime}|$ is non-increasing in $q$. Thus it is enough to check that 
    \beao
 \E\Big[ 
      \sup_{ \substack{  q \in [\a - \epsilon, \a + \epsilon]  } }
 \frac{\|\bfQ\|_{\a-\epsilon}^{q+\a}}{\|\bfQ\|_{q}^{q}}
 \Big| \sum_{t \in \mathbb{Z}} \sum_{j \in \mathbb{Z}}  
\frac{|\bfQ_t|^{q}}
{\|\bfQ\|_{q}^{q}} 
\frac{|\bfQ_j|^{q}}
{\|\bfQ\|_{q}^{q }} 
\log\Big(\frac{\|\bfQ\|_{\a - \epsilon}}{|\bfQ_t|}\Big)\log\Big(\frac{|\bfQ_t|}{|\bfQ_j|}\Big)\Big|
\Big] < \infty, 
\eeao
for $\vep>0$ sufficiently small. Moreover H\"older's inequality provides
\beao
\sum_{t=-\infty}^\infty |\bfQ_t|^{\a-\vep}&=&\sum_{t=-\infty}^\infty|\bfQ_t|^{(\a-\vep)q/(q+\a)}|\bfQ_t|^{(\a-\vep)\a/(q+\a)}\\
&\le & \Big(\sum_{t=-\infty}^\infty|\bfQ_t|^q\Big)^{(\a-\vep)/(q+\a)}\Big(\sum_{t=-\infty}^\infty|\bfQ_t|^{(\a-\vep)\a/(q+\vep)}\Big)^{(q+\vep)/(q+\a)}\,.
\eeao
Thus $\|\bfQ\|_{\a-\vep}^{q+\a}\le \|\bfQ\|_q^q\|\bfQ\|_{(\a-\vep)\a/(q+\vep)}^\a$.
For every chosen $0<\delta<1$ we have from Equation~\eqref{eq:bound:logarithm},
and 
assuming that every $|\bfQ_t| \le 1$ a.s., 
\beao
\sum_{t \in \mathbb{Z}}  
\frac{|\bfQ_t|^{q}}
{\|\bfQ\|_{q}^{q}} 
|
\log(|\bfQ_t|)|^2&\le& \sum_{t \in \mathbb{Z}}  
\frac{|\bfQ_t|^{q-\delta}}
{\|\bfQ\|_{q}^{q}} \\
&\le& \frac{\big(\sum_{t \in \mathbb{Z}}  
|\bfQ_t|^{q}\big)^{q/(q-\delta)}}
{\|\bfQ\|_{q}^{q}}\\
&\le& \Big(\sum_{t \in \mathbb{Z}}  
|\bfQ_t|^{q}\Big)^{\delta/(q-\delta)}\,.
\eeao
Squaring both sides of the last inequality we obtain an upper bound \beao
\Big(\sum_{t \in \mathbb{Z}}  
\frac{|\bfQ_t|^{q}}
{\|\bfQ\|_{q}^{q}} 
|
\log(|\bfQ_t|)|^2\Big)^2&\le& \Big(\sum_{t \in \mathbb{Z}}  
|\bfQ_t|^{q}\Big)^{2\delta/(q-\delta)}\\
&\le& \|\bfQ\|_q^{q2\delta/(q-\delta)}\\
&\le& \|\bfQ\|_{\a-\delta}^{(\a+\vep)2\delta/(\a-\vep-\delta)}\,.
\eeao 
This upper bound is also valid for $|\log(\|\bfQ\|_q)|$, $q-\vep<q+\vep$, up to constant. 

Combining these inequalities we conclude the sufficient condition 
\beao
\E\big[\|\bfQ\|_{(\a-\vep)\a/(q+\vep)}^\a\|\bfQ\|_{\a-\delta}^{(\a+\vep)2\delta/(\a-\vep-\delta)}\big]<\infty\,.
\eeao
Since $\vep$ and $\delta$ can be taken arbitrarily small this allows to conclude 
\beao 
0 \;<\;  \E[ \sup_{q,q^\prime \in [\alpha - \epsilon, \alpha + \epsilon]} \psi^\prime_{q,q^\prime}(\bfQ)  ] &\le& \E[\|\bfQ\|_{\a - \epsilon}^{\a+\epsilon}] \;<\;  \infty,
\eeao
where the last equality follows again from {\bf M} choosing $\epsilon$ sufficiently small. } 
\par

{

Now let $k,k^\prime$ be such that $k/k^\prime \to \kappa$, $\nto$, for $\kappa > 0$.
Recall the asymptotic development of $q \mapsto c(q)$ and the definition of $\psi_{q,\a}$ in \eqref{eq:continuity:q}.
We write
\beao
\psi_\alpha^\bfQ &=& \E[ \psi_{ \alpha,\alpha}(\bfQ) ] \;=\; \E\Big[  \sum_{t \in \mathbb{Z}} |\bfQ_t|^{\a }
\log\big( \tfrac{\|\bfQ\|_\a }{|\bfQ_t|}\big)   \Big], 
\eeao
which corresponds to the constant in \eqref{eq:osu}.
Moreover, notice $q \mapsto \E[\psi_{q,\a}(\bfQ)]$ is a continuous function at $\a$ under condition ${\bf M }$.
Therefore, applying~\eqref{eq:normality:hill} and \eqref{eq:continuity:q} we obtain 
\beao 
\sqrt{k_n}(1 - c(\widehat \a )) & \xrightarrow[]{ d} &  \a \psi_\a^{\bfQ}\, \mathbb{G}(h(\cdot/1) - \alpha^{-1} e(\cdot/1), \alpha ),
\eeao 
  We notice in particular $c(\widehat \a) \xrightarrow[]{\P} 1 $, as $n \to \infty$.
\par 
In the case where $k/k^\prime \to 0$ we omit the details here as this mimics the proof provided in Section~\ref{proof:thm:main}. First restricting the family $\mathcal F$ to $\mathcal F_h$ following the notation in \eqref{eq:t}, we obtain
\beao 
\sqrt{k^\prime}(1 - c(\widehat \a )) & \xrightarrow[]{ d} &  \a \psi_\a^{\bfQ}\, \mathcal{N}(0, \E[(h(Y\bfQ) - \alpha^{-1} e(Y\bfQ))^2] )\,, \quad n \to \infty.
\eeao 
This last equation implies
$\sqrt{k_n}(1 - c(\widehat \a )) \xrightarrow[]{\P} 0,$ as $n \to \infty$.
Finally, this concludes the proof of Lemma~\ref{lem:continuity:c(p):2}. \qed


\section{Proofs of the results of Section \ref{sec:examples}}\label{section:models}
\color{black} 
\subsection{Proof of Proposition~\ref{lem:CS:m0}}\label{proof:lem:cs:m0}
From the discussion in Section~\ref{sec:example:m0}, we can see that all assumptions in Theorem~\ref{thm:main}  are satisfied. 
Note that if $p=\alpha$, then $|\bfQ|$ has a deterministic expression in the shift-invariant space. 
 Moreover, $\bfQ$ has at most $m_0$ non-zero coordinates. 
Thereby, condition $\bf M$ is satisfied.
Moreover, the index estimators in \eqref{eq:estimator:Ei}, \eqref{eq:estimator:c1}, and \eqref{eq:cluster:sizes}, with $f_\alpha : \bfx \mapsto \|\bfx\|^\alpha_\infty/\|\bfx\|_\alpha^\alpha$, $f_\alpha: \bfx \mapsto \|\bfx\|_1^\alpha/\|\bfx\|_\alpha^\alpha$, and $f_\alpha: \bfx \mapsto (|\bfx|^\alpha_{(j)} - |\bfx|^\alpha_{(j+1)})/\|\bfx\|_\alpha^\alpha$, respectively, satisfy $\Var(f_\alpha(Y\bfQ)) = 0$.
In addition, appealing to Remark~\ref{remark:check:S:ei}, $\partial f_q/\partial q |_{q=\a}$ and $ \sup_{q \in [\alpha - \epsilon, \alpha + \epsilon]}\partial^2f_q/\partial q^2|_{q=q^\prime}$ are bounded continuous functions in $\mathcal{G}_+(\ell^\alpha)$, and this proves $\bf S$ holds.   
 Finally, using the change of norms formula in \eqref{eq:change:of:norms}, we can also show $\Var(f_\a(Y\bfQ^{(p)})) = 0$, for any $p \in (0,\infty]$, and this concludes the proof. 
\qed

\subsection{Proof of Proposition~\ref{lem:linear:process}}\label{sec:models:ca:cs}

We start by noticing that Equation \eqref{eq:truncation:k:linear:1} rewrites as: for all $\delta > 0$,
\beam \label{eq:truncation:k:linear}
\lim_{s \to  \infty} \limsup_{n \to  \infty} \frac{\P( \mbox{$ \sum_{t=1}^n | \sum_{|j| > s} \varphi_j \bfZ_{t-j}/x_n|^p > \delta $} )}{ n \P(|\bfX_1| > x) } = 0.
\eeam 
Assuming \eqref{eq:truncation:k:linear:1} holds, {\bf AC} and ${\bf CS}_p$ follow straightforwardly since for all $s > 0$, the series $(\bfX_t^{(s)})$ is a linear $m_0-$dependent sequence with $m_0 = 2 s +1$, such that $\bfX_t^{(s)} = \sum_{|j| \le s} \varphi_j \bfZ_{t-j}$. The former satisfies ${\bf AC}, {\bf CS}_p$, for $p > \alpha/2$, as in Example~\ref{sec:example:m0}.

\par 
We now turn to the verification of Equation \eqref{eq:truncation:k:linear}. Actually, by monotonicity of the $\ell^p-$norms, if \eqref{eq:truncation:k:linear} holds for $ \alpha/2 < p < \alpha$, then it also holds for $p \ge \alpha$. In the following we assume $ \alpha/2 < p < \alpha$.

For $p \le 1$, the subadditivity property yields
\beao 
| \mbox{$\sum_{|j| > s}$} \varphi_j \bfZ_{t-j}|^p   &\le& \mbox{$\sum_{|j| > s}$} |\varphi_j \bfZ_{t-j}|^p  \;=:\; I_{1,t}.
\eeao 
 That the partial sums of $(I_{1,t})$ satisfy \eqref{eq:truncation:k:linear} for $0<\a<1$ follows from standard arguments, see for instance Section 6.1 of \cite{hult:samorodnistky:2010}. We provide an alternative prove below, also valid for every $\a>0$.

For $p > 1$, a Taylor decomposition of functional $|\cdot|^p:\mathbb{R} \to \mathbb{R}$ entails, for all $a,b, \in \mathbb{R}$,
\beao
 |a+b|^p
    &=& |a|^p + p\, \sign(a)|a|^{p-1}b + \frac{p(p-1)}{2}\,|a|^{p-2}b^2  + \cdots + R_{[p]}(a,b),
\eeao 
where the remaining term satisfies
\beao 
R_{[p]} \; = \; R_{[p]}(a,b) & \le & \frac{p(p-1) \cdots (p-[p] )}{[p]!}|b - \xi a|^{p-[p] } b^{[p]},
\eeao 
for one $\xi \in [0,1]$. To simplify notation, in the remaining lines of the proof we denote $(|\bfZ_t|)$ by $(Z_t)$. Then, the Taylor expression above yields
\beao 
\lefteqn{|\mbox{$\sum_{|j| > s }$} \varphi_j \bfZ_{t-j}/x_n |^p}\\
&\le&
|\varphi_s Z_{t-s}/x_n|^p  + \; p\, |\varphi_s Z_{t-s}/x_n|^{p-1}   (\mbox{$\sum_{\substack{|j| \ge s\\
j\not = s}}$} |\varphi_j Z_{t-j}/x_n|) \\
&& + \cdots + R_{[p]}.
\eeao 

Moreover, to handle the remaining term $R_{[p]}$, we use subadditivity of the real function $x \mapsto x^{p - [p]}$. Hence, 
\beao 
\lefteqn{|\mbox{$\sum_{|j| \ge s}$} \varphi_j \bfZ_{t-j}/x_n|^p }\\
&\le& |\varphi_s Z_{t-s}/x_n|^p + c |\varphi_s Z_{t-s}/x_n|^{p-1}      \big( \mbox{$\sum_{\substack{|j| \ge s\\
    j\not = s}}$} |\varphi_j| Z_{t-j}/x_n \big) \\
&& + \cdots +  c  |\varphi_s Z_{t-s}/x_n|^{p-[p]} \big( \mbox{$\sum_{\substack{|j| \ge s \\ j\not = s}}$} |\varphi_j| Z_{t-j}/x_n \big)^{[p]} \\
&& + c\, \mbox{$\sum_{ \substack{|j| \ge s \\ j \not = s}}$} | |\varphi_j| Z_{t-j}/x_n|^{p-[p]}   \big( \mbox{$\sum_{ \substack{|j| \ge s \\ j \not = s}}$} |\varphi_j| Z_{t-j}/x_n  \big)^{[p]} \\
&\leq & c\,( I_{0,t} + I_{1,t} + \cdots + I_{[p],t} + I_{[p] + 1,t})\,,
\eeao 
where $c>0$ is a constant of no interest, only depending on $p$. Relying on the bound above, we require to control the previous $[p]+2$ terms.
We argue using a truncation argument. Our goal is to prove that for all $l = 0, \dots, [p] +1 $, for all $\epsilon, \delta >0$
\begin{align}\label{eq:sum:terms2}
    \lim_{s \to  \infty}\limsup_{n \to  \infty} \frac{\P( \sum_{t=1}^n \overline{I_{l,t}}^\epsilon > \delta ) }{ n \P(|\bfX_1| > x_n) } +  \frac{\P( \sum_{t=1}^n \underline{I_{l,t}}_\epsilon > \delta ) }{ n \P(|\bfX_1| > x_n)}\; =\; 0,
\end{align}
where the truncated terms are defined as follows: for $l = 0, \dots, [p] $
\beao 
\overline{I_{l,t}}^\epsilon 
    &:=& |\overline{ \varphi_s Z_{t-s}/x_n}^\epsilon |^{p - l}  \big(  \mbox{$\sum_{\substack{|j| \ge s \\ j\not = s}}$}   \overline{ |\varphi_j| Z_{t-j}/x_n}^\epsilon \big)^{l},\\
\overline{I_{[p]+1,t}}^\epsilon
    &:=& \mbox{$\sum_{\substack{|j| \ge s \\ j\not = s}}$} | \overline{ \varphi_j Z_{t-j}/x_n}^\epsilon |^{p-[p]}  
\big( \mbox{$\sum_{\substack{|j| \ge s \\ j\not = s}}$} \overline{ |\varphi_j| Z_{t-j}/x_n}^\epsilon \big)^{[p]},\\
\underline{I_{l,t}}_\epsilon 
    &:=& |\underline{ \varphi_s Z_{t-s}/x_n}_\epsilon |^{p - l} 
\big(   \mbox{$\sum_{\substack{|j| \ge s \\ j\not = s}}$} \underline{ |\varphi_j| Z_{t-j}/x_n}_\epsilon \big)^{l},\\
\underline{I_{[p]+1,t}}_\epsilon 
    &:=& \mbox{$\sum_{ \substack{|j| \ge s \\ j \not = s}}$} |\underline{\varphi_j Z_{t-j}/x_n}_\epsilon |^{p-[p]} 
\big( \mbox{$\sum_{\substack{|j| \ge s \\ j\not = s}}$} \underline{ |\varphi_j| Z_{t-j}/x_n}_\epsilon \big)^{[p]}.
\eeao
To study each term, we write for $q \in \mathbb{N}$, $\mathcal{J} \subseteq \mathbb{N}$, $(\psi_j) \in \mathbb{R}^{|\mathcal{J}|}$, 
\beam\label{eq:power:sum}
(\mbox{$\sum_{j \in \mathcal{J}}$} \psi_j)^q &=& \mbox{$\sum_{\substack{i_1,\dots,i_q \\ i_j \in \mathcal{J}}  }$} \psi_{i_1}\cdots \psi_{i_q}. 
\eeam 
\par 
We start by analyzing the terms corresponding to the truncation from below. An application of  Markov's inequality together with Equation~\eqref{eq:power:sum} yield
\beao 
\lefteqn{ 
\P(\mbox{$\sum_{t=1}^n \underline{I_{[p]+1,t}}_\epsilon $} > \delta)}\\
&\le& \delta^{-1}n \E[\underline{I_{[p]+1,t}}_\epsilon]\\
&\le& \delta^{-1} n \mbox{$\sum_{ \substack{ i_1,\cdots, i_{[p] +1} \\ |i_j| \ge s } }$}  |\varphi_{i_1} \cdots \varphi_{i_{[p]} }| |\varphi_{i_{[p]+1}}|^{p-[p]}\, \\
&& \times \E[ | \underline{Z_{-i_1}/x_n}_{\epsilon \varphi_{i_1}^{-1}}| \cdots |\underline{ Z_{-i_{[p] } }/x_n}_{\epsilon \varphi_{i_{[p]}}^{-1}} | |\underline{ Z_{-i_{[p]+1}}/x_n}_{ {\epsilon \varphi_{i_{[p] +1}}^{-1}}}|^{p-[p]}]. 
\eeao 
\par 
Moreover, recall the noise sequence $(Z_t)$ are iid random variables satisfying {\bf RV}$_\alpha$. Therefore, for the expectation we can factor the independent noise terms as the product of at most $[p]+1$ terms. For each term, the noise random variable $Z_{-i_j}$ will be raised to the power at most $p$. As $p < \alpha$, we can use Karamata's theorem on each of these terms.  
\par 
Finally, an application of Karamata's theorem and Potter's bound yield there exists $\kappa > 0$, such that for all $\epsilon, \delta > 0$
\beao 
\frac{\P( \mbox{$\sum_{t=1}^n \underline{I_{[p]+1,t}}_\epsilon $} > \delta) }{n \P(|Z_1| > x_n)} 
&\le &\frac{\alpha}{\alpha-p} O( \delta^{-1} \,\epsilon^{-(\alpha-\kappa)}\, ( \mbox{$\sum_{|j| \ge s}$} |\varphi_j|^{\alpha-\kappa})^p).
\eeao 
We conclude that this term is negligible by letting first $n \to \infty$, and then $s \to \infty$.
\par 
We can follow similar steps as before to study the truncation from below terms $I_{l,t}$, $l = 0, \dots, [p]$. An application of Markov's inequality entails there exists $\kappa > 0$ such that
\beao 
\frac{\P(\mbox{$\sum_{t=1}^n \underline{ I_{l,t}}_\epsilon$} > \delta)}{n\P(|Z_1| > x_n) } &\le& \delta^{-1} \frac{\E\big[ |\underline{ \varphi_s Z_1/x_n}_\epsilon |^{p - l} \big]}{\P(|Z_1| > x_n)} \E\big[
\big( \mbox{$\sum_{\substack{|j| > s \\ j\not = s}}$} \underline{ |\varphi_j| Z_{t-j}/x_n}_\epsilon \big)^{l}\big]  \\
&=& \frac{\alpha}{\alpha-p+l}  O(\delta^{-1} |\varphi_s|^{\alpha-\kappa} \epsilon^{-(\alpha-\kappa)} ( \mbox{$\sum_{|j| \ge s}$} |\varphi_j|^{\alpha-\kappa})^l ),
\eeao 
as $n \to \infty$, where the last relation holds by Karamata's theorem and an application of Potter's bound. Hence, for $l = 0, \dots, [p],[p]+1$, we conclude letting first $n \to \infty$, and then $s \to \infty$. To sum up we have shown that 
\beao 
\lim_{s \to  \infty}\limsup_{n \to  \infty} \frac{\P( \sum_{t=1}^n \underline{I_{l,t}}_\epsilon > \delta ) }{ n \P(|\bfX_1| > x_n) } &=& 0.
\eeao 
\par 
We now turn to the terms relative to the truncation from above. In this case, the assumption $n/x_n^p \to 0$, entails $n\E[\overline{I_{l,t}}^\epsilon] \to 0$, as $n \to \infty$, for $l = 0, \dots,[p],[p]+1$. Therefore, to establish Equation~\eqref{eq:sum:terms2}, it suffices to check the following relation holds:
\beam\label{eq:bar:epsilon:chebychev} 
\lim_{s \to  \infty}\limsup_{n \to  \infty}\frac{\P(\mbox{$\sum_{t=1}^n \overline{ I_{l,t}}^\epsilon - \E[ \overline{ I_{l,t}}^\epsilon] $} > \delta)}{n\P(|\bfX_1| > x_n)} &=& 0. 
\eeam
We apply Chebychev's inequality, which together with the stationarity of the series $(Z_t)$, yields
\beao 
{ \P(\mbox{$\sum_{t=1}^n \overline{ I_{l,t}}^\epsilon - \E[ \overline{ I_{l,t}}^\epsilon] $} > \delta)}
&\le& 2 \, \delta^{-2} \, n \mbox{$\sum_{t=0}^n$} \Cov( \overline{ I_{l,0}}^\epsilon , \overline{ I_{l,t}}^\epsilon )
\eeao 
As in the arguments for the truncation from above, we start by showing that the term in \eqref{eq:bar:epsilon:chebychev} is negligible for $l =[p] +1$. This reasoning  can again be extended for $l = 0, \dots, [p]$. Computation of the covariances then yields
\beao 
\lefteqn{ \Cov( \overline{ I_{[p]+1,0}}^\epsilon , \overline{ I_{[p] +1 ,t}}^\epsilon ) } \\
&=& \mbox{$\sum_{ \substack{ i_1,\cdots, i_{[p] +1} \\ |i_j| \ge s, i_j \not = s} }$} 
\mbox{$\sum_{ \substack{ 
\ell_1,\cdots, \ell_{[p] +1} \\ |\ell_j| \ge  s, \ell_j \not = s} }$}
|\varphi_{i_1} \cdots \varphi_{i_{[p]} }| |\varphi_{i_{[p]+1}}|^{p-[p]} |\varphi_{\ell_1} \cdots \varphi_{\ell_{[p]} }| |\varphi_{\ell_{[p]+1}}|^{p-[p]}
\\
&& \times \Cov(  | \overline{Z_{-i_1}/x_n}^{\epsilon \varphi_{i_1}^{-1}} \cdots \overline{ Z_{-i_{[p] } }/x_n}^{\epsilon \varphi_{i_{[p]}}^{-1}} | |\overline{ Z_{-i_{[p]+1}}/x_n}^{{\epsilon \varphi_{i_{[p] +1}}^{-1}} }|^{p-[p]}  \\
    && \quad \qquad | \overline{Z_{t-\ell_1}/x_n}^{\epsilon \varphi_{\ell_1}^{-1}} \cdots \overline{ Z_{t-\ell_{[p] } }/x_n}^{\epsilon \varphi_{\ell_{[p]}}^{-1}} | |\overline{ Z_{t-\ell_{[p]+1}}/x_n}^{ _{\epsilon \varphi_{\ell_{[p] +1}}^{-1}} }|^{p-[p]}). 
\eeao 
Actually, all but a finite number of terms vanish in the previous double sum because the noise sequence $(Z_t)$ are independent random variables. More precisely,
\beao 
\lefteqn{ \Cov( \overline{ I_{[p]+1,0}}^\epsilon , \overline{ I_{[p] +1 ,t}}^\epsilon ) } \\
&=& \mbox{$\sum_{ \substack{ i_1,\cdots, i_{[p] +1} \\ |i_j| \ge s, i_j \not = s} }$} 
\mbox{$\sum_{ \substack{ 
\ell_1,\cdots, \ell_{[p] +1} \\ \ell_j \in \{i_1-t,\dots,i_{[p]+1}-t\}} }$}\\
&& \times 
|\varphi_{i_1} \dots \varphi_{i_{[p]} }| |\varphi_{i_{[p]+1}}|^{p-[p]} |\varphi_{\ell_1} \dots \varphi_{\ell_{[p]} }| |\varphi_{\ell_{[p]+1}}|^{p-[p]}
\\
&& \times \Cov(  | \overline{Z_{-i_1}/x_n}^{\epsilon \varphi_{i_1}^{-1}} \cdots \overline{ Z_{-i_{[p] } }/x_n}^{\epsilon \varphi_{i_{[p]}}^{-1}} | |\overline{ Z_{-i_{[p]+1}}/x_n}^{{\epsilon \varphi_{i_{[p] +1}}^{-1}} }|^{p-[p]}  \\
    && \quad \qquad | \overline{Z_{t-\ell_1}/x_n}^{\epsilon \varphi_{\ell_1}^{-1}} \cdots \overline{ Z_{t-\ell_{[p] } }/x_n}^{\epsilon \varphi_{\ell_{[p]}}^{-1}} | |\overline{ Z_{t-\ell_{[p]+1}}/x_n}^{ _{\epsilon \varphi_{\ell_{[p] +1}}^{-1}} }|^{p-[p]}). 
\eeao
Moreover, regarding the last covariance term, we notice that it is sufficient to bound the expectation of the product as
\beao 
\lefteqn{\Cov(  | \overline{Z_{-i_1}/x_n}^{\epsilon \varphi_{i_1}^{-1}} \cdots \overline{ Z_{-i_{[p] } }/x_n}^{\epsilon \varphi_{i_{[p]}}^{-1}} | |\overline{ Z_{-i_{[p]+1}}/x_n}^{{\epsilon \varphi_{i_{[p] +1}}^{-1}} }|^{p-[p]},}  \\
    && \quad \qquad | \overline{Z_{t-\ell_1}/x_n}^{\epsilon \varphi_{\ell_1}^{-1}} \cdots \overline{ Z_{t-\ell_{[p] } }/x_n}^{\epsilon \varphi_{\ell_{[p]}}^{-1}} | |\overline{ Z_{t-\ell_{[p]+1}}/x_n}^{ _{\epsilon \varphi_{\ell_{[p] +1}}^{-1}} }|^{p-[p]})\\
    &\le & 
\E[  | \overline{Z_{-i_1}/x_n}^{\epsilon \varphi_{i_1}^{-1}} \cdots \overline{ Z_{-i_{[p] } }/x_n}^{\epsilon \varphi_{i_{[p]}}^{-1}} | |\overline{ Z_{-i_{[p]+1}}/x_n}^{{\epsilon \varphi_{i_{[p] +1}}^{-1}} }|^{p-[p]}  \\
    && \quad \qquad \times | \overline{Z_{t-\ell_1}/x_n}^{\epsilon \varphi_{\ell_1}^{-1}} \cdots \overline{ Z_{t-\ell_{[p] } }/x_n}^{\epsilon \varphi_{\ell_{[p]}}^{-1}} | |\overline{ Z_{t-\ell_{[p]+1}}/x_n}^{ _{\epsilon \varphi_{\ell_{[p] +1}}^{-1}} }|^{p-[p]}].
\eeao 
Since $(Z_t)$ are iid random variable, the expectation term above can be written as the product of expectations as follows
\beao 
\lefteqn{|\varphi_{i_1} \dots \varphi_{i_{[p]} }| |\varphi_{i_{[p]+1}}|^{p-[p]} |\varphi_{\ell_1} \dots \varphi_{\ell_{[p]} }| |\varphi_{\ell_{[p]+1}}|^{p-[p]}}\\
&&\E(  | \overline{Z_{-i_1}/x_n}^{\epsilon \varphi_{i_1}^{-1}}| \cdots |\overline{ Z_{-i_{[p] } }/x_n}^{\epsilon \varphi_{i_{[p]}}^{-1}} | |\overline{ Z_{-i_{[p]+1}}/x_n}^{{\epsilon \varphi_{i_{[p] +1}}^{-1}} }|^{p-[p]}  \\
    && \quad \times | \overline{Z_{t-\ell_1}/x_n}^{\epsilon \varphi_{\ell_1}^{-1}}| \cdots |\overline{ Z_{t-\ell_{[p] } }/x_n}^{\epsilon \varphi_{\ell_{[p]}}^{-1}} | |\overline{ Z_{t-\ell_{[p]+1}}/x_n}^{ _{\epsilon \varphi_{\ell_{[p] +1}}^{-1}} }|^{p-[p]}) \\
    &=& 
\mbox{$\prod_{ 
\substack{ \gamma_1 + \cdots + \gamma_r  = p,\\  
\gamma_1^\prime + \cdots +\gamma^\prime_{r^\prime} = p }} $ } 
    |\varphi_{i_{\gamma_j} }|^{\gamma_j} |\varphi_{i_{\gamma_j}-t}|^{{\gamma^\prime_j}} \E[|\overline{Z_0/x_n}^{\epsilon \varphi_{i_{\gamma_j}}^{-1}}|^{\gamma_j} |\overline{Z_0/x_n}^{\epsilon \varphi_{i_{\gamma_j}-t}^{-1}}|^{\gamma_j'} ],
\eeao
where $
   \gamma_j,\gamma_j^\prime \in \{0,1,\dots,[p]\}$, $\gamma_r,  \gamma^\prime_{r^\prime} \in \{0,p-[p],p-[p]+1,\dots,p\}$ and $1\le r, r'\le [p]+1$. The product above is a factorization with respect to independent noise terms. We have also used the stationarity of $(Z_t)$. The new indices $\gamma_1,\dots,\gamma_r$ are defined recursively in terms of the sequence $(i_t)$. Similarly, we define $\gamma^\prime_1,\dots \gamma^\prime_{r^\prime}$ from $(\ell_t)$. To define $\gamma_1$, first we count the number of times the index $i_1$ appears in $\mathcal{I} = \{i_1,\dots,i_{[p]}\}$. If $i_1 \not = i_{[p]+1}$, we put $\gamma_1$ equal to this count, otherwise, we set $\gamma_1$ equal to this count plus $p-[p]$. Then, we look for the next index  different than $i_1$, say $i_j$, and set $\gamma_2$ as the number of repetitions of $i_j$ in $\mathcal{I}$ plus $p-[p]$ if $i_j \not = i_{[p]+1}$.  We continue in this way until the indices $i_r$ and $\gamma_r$ are defined as previously. We stop as we recognize that all the indices $i_{r},i_{r+1},\cdots,i_{[p]+1}$ have already been considered. Therefore, $\gamma_1+\cdots+\gamma_r = p$. In an identical fashion, we define $\gamma^\prime_1,\dots \gamma^\prime_{r^\prime}$ from $(\ell_t)$. Moreover, notice that for every  $\gamma\in\{\gamma_1,\ldots, \gamma_r\}$ and $\gamma'\in\{\gamma_1',\ldots, \gamma_r'\}$
\beao 
    \lefteqn{|\varphi_{i \gamma }|^\gamma |\varphi_{i_\gamma-t}|^{\gamma^\prime}\E[|\overline{Z_0/x_n}^{\epsilon \varphi_{i_\gamma}^{-1}}|^{\gamma} |\overline{Z_0/x_n}^{\epsilon \varphi_{i_{\gamma}-t}^{-1}}|^{\gamma'}  ] }\\
    &\le& (|\varphi_{i \gamma }|^{2\gamma} |\varphi_{i_\gamma-t}|^{2\gamma^\prime} \E[|\overline{Z_0/x_n}^{\epsilon \varphi_{i_\gamma}^{-1}}|^{2\gamma}]\E [|\overline{Z_0/x_n}^{\epsilon \varphi_{i_\gamma-t}^{-1}}|^{2{\gamma^\prime} }  ])^{1/2}\\
    &\le&  (|\varphi_{i \gamma }|^{2p} \E[|\overline{Z_0/x_n}^{\epsilon \varphi_{i_\gamma}^{-1}}|^{2p}])^{\gamma/2p} 
    \, (|\varphi_{i_\gamma-t}|^{2p}\E[ |\overline{Z_0/x_n}^{\epsilon \varphi_{i_\gamma-t}^{-1}}|^{2p}  ])^{\gamma^\prime/2p}.
\eeao 
The key property $\gamma_1 + \cdots + \gamma_r + \gamma_1^\prime + \cdots + \gamma_r^\prime = 2p$ yields 
\beao
\P(Z_0 > x_n)=\mbox{$\prod_{ 
\substack{ \gamma_1 + \cdots + \gamma_r  = p\\  
\gamma_1^\prime + \cdots +\gamma^\prime_{r^\prime} = p  }} $ }(\P(Z_0 > x_n))^{(\gamma+\gamma^\prime)/2p}\, .
\eeao 
In this case, we can apply Karamata's Theorem to each one of the expectation terms. 
Readily, 
\beao 
\lefteqn{ \mbox{$\sum_{t=1}^n$}
\Cov( \overline{ I_{[p]+1,0}}^\epsilon , \overline{ I_{[p] +1 ,t}}^\epsilon )/\P(Z_0 > x_n) } \\
&\le& \mbox{ $\sum_{t=1}^n$}\mbox{$\sum_{ \substack{ i_1,\cdots, i_{[p] +1} \\ |i_j| \ge s, i_j \not = s} }$ }
\mbox{$\sum_{ \substack{ 
\ell_1,\cdots, \ell_{[p] +1} \\ \ell_j \in \{i_1-t,\dots,i_{[p]+1}-t\}} }$} 
\mbox{$\prod_{ 
\substack{ \gamma_1 + \cdots + \gamma_r  = p\\  
\gamma_1^\prime + \cdots +\gamma^\prime_{r^\prime} = p  }} $ } 
        \\
&&   
    \frac{ (|\varphi_{i \gamma }|^{2p} \E[|\overline{Z_0/x_n}^{\epsilon \varphi_{i_\gamma}^{-1}}|^{2p}])^{\gamma/2p} 
    \, (|\varphi_{i_\gamma-t}|^{2p}\E[ |\overline{Z_0/x_n}^{\epsilon \varphi_{i_\gamma-t}^{-1}}|^{2p}  ])^{\gamma^\prime/2p} }{(\P(Z_0 > x_n))^{(\gamma+\gamma^\prime)/2p}} \\
 &\le& c\, \mbox{$\sum_{ \substack{ i_1,\cdots, i_{[p] +1} \\ |i_j| \ge s, i_j \not = s} }$ } 
\mbox{$\sum_{ \substack{ 
\ell_1,\cdots, \ell_{[p] +1} \\ \ell_j \in \{i_1-t,\dots,i_{[p]+1}-t\}} }$}\mbox{ $\sum_{t=1}^n$} \\
&&\mbox{$\prod_{ 
\substack{ \gamma_1 + \cdots + \gamma_r  = p\\  
\gamma_1^\prime + \cdots +\gamma^\prime_{r^\prime} = p   }} $ }  |\varphi_{i_\gamma}|^{\gamma(\alpha-\kappa)}|\varphi_{i_\gamma - t}|^{\gamma^\prime(\alpha-\kappa)}\\
&=& c\,\mbox{$\sum_{ \substack{ i_1,\cdots, i_{[p] +1} \\ |i_j| \ge s, i_j \not = s} }$ } 
\mbox{$\sum_{ \substack{ 
\ell_1,\cdots, \ell_{[p] +1} \\ \ell_j \in \{i_1-t,\dots,i_{[p]+1}-t\}} }$}\mbox{ $\sum_{t=1}^n$}  |\varphi_{i_j}|^{ (\alpha-\kappa)}|\varphi_{\ell_j - t}|^{ (\alpha-\kappa)} \\ 
&\le& c\, (p+1)(\mbox{$\sum_{|i| \ge s}$ } 
 |\varphi_{i}|^{ (\alpha-\kappa)} )^p ( \mbox{$\sum_{j \in \mathbb{Z}}$} |\varphi_{j}|^{ (\alpha-\kappa)})^p.
\eeao
where $c > 0$, is a constant of no interest. We conclude by letting $s \to \infty$ that \eqref{eq:bar:epsilon:chebychev} holds for $l = [p]+1$. Similarly, this arguments can be extended for $l =1,\dots, [p]$. Overall, this shows \eqref{eq:sum:terms2} holds, and this concludes the proof. \qed

\subsection{Proof of Theorem~\ref{eq:thm:linear:tcl}}\label{sec:proof:clt:linear}

We aim to apply Theorem~\ref{thm:main} for $p > \alpha/2$. First notice condition \eqref{cond:i)} yields $\|\varphi_t\|_p < \infty$.  Let $\kappa' > 0$, and define a sequence $(x_n)$ such that $x_n \in (n^{\kappa' + {1}/{(p \land \alpha)  }}, + \infty)$.  Proposition~\ref{lem:linear:process} implies conditions {\bf AC}, {\bf CS}$_p$ hold, and $n\P(|\bfX_0| > x_n) \to 0$, as $n \to \infty$. Fix $\kappa' > 0$, and $x_n = O( n^{\kappa' + {1}/{(p \land \alpha)  }})$, as $n \to \infty$.
\par 
{
Furthermore, since there exists $\epsilon > 0$ such that $\|(\varphi_j)\|_{\alpha - \epsilon} < \infty$, then condition {\bf M} is automatically satisfied by definition of $\bfQ$.
In addition, 
for the $\widehat \a-$cluster based estimators from Section~\ref{sec:examples:cluster} in \eqref{eq:estimator:Ei} \eqref{eq:estimator:c1}, and \eqref{eq:cluster:sizes} we need to check {\bf S} holds. 
For this we verify the conditions of Lemma~\ref{lem:verif:S}.
Actually, Equation \eqref{eq:moment:cond} has already been demonstrated in Proposition~\ref{lem:linear:process}; and it suffices to follow the lines of the proof of Equation~\eqref{eq:bar:epsilon:chebychev}.
We can therefore conclude that ${\bf S}$ holds
for the $\widehat \a-$cluster based estimators from Section~\ref{sec:examples:cluster} in \eqref{eq:estimator:Ei} \eqref{eq:estimator:c1}, and \eqref{eq:cluster:sizes}.
\par


}
\par 
Finally, to apply Theorem~\ref{thm:main} it suffices to verify the $\beta-$mixing conditions 
{\bf MX}$_\beta$ holds. 
Choose $(k_n)$ as in \eqref{eq:k:3}. Then, there exists $\epsilon,\epsilon^\prime > 0$, and a constant $c > 0$, such that
\beam\label{eq:k:lin}
m_n/k_n &=&  1/(c(p)b_n\P(|\bfX_0| > x_{b_n})) \\
& \le & c\,  x_{b_n}^{(\alpha + \epsilon)}/b_n  \;=\; c\, b_n^{-1+\tfrac{\alpha}{\alpha \land p } +\tfrac{\epsilon}{\alpha \land p } + \kappa' (\alpha + \epsilon) }\nonumber \\
&\le&  c\,  b_n^{ -1 + \tfrac{\alpha}{p\land \alpha} + \epsilon^\prime}. \nonumber
\eeam
This follows using Potter's bounds. Let $\ell_n = b_n^{(1-\epsilon)}$ such that $\ell_n/b_n \to 0$.  Finally, applying Proposition~\ref{prop:beta:mixing:linear}, we can find $\epsilon^\prime > 0$ such that the relation below holds
\beao 
m_n\beta_{\ell_n}/k_n &=& O( b_n^{ -\tfrac{(\rho-1)\alpha}{1+\alpha} + \tfrac{\alpha}{\alpha \land p} + \epsilon^\prime }  ), \quad n \to \infty.
\eeao 
Then, taking $\rho > 1 + \tfrac{1+\alpha}{\alpha \land p} + \epsilon$ yields  $m_n\beta_{\ell_n}/k_n \to 0$. In this argument, we can choose $\epsilon^\prime > 0$ to be arbitrarily small. Then, assuming \eqref{cond:i)} entails $m_n\beta_{\ell_n}/k_n \to 0$.
\par 
Moreover, let $\delta >0$ be as in \eqref{eq:cond:lind:1}. Since $\rho > \tfrac{2}{\delta}(1+\tfrac{1}{\alpha}) + 3 + \tfrac{2}{\alpha}$, equation~\eqref{eq:beta:linear} yields $\sum_{t=1}^{\infty}\beta_{t}^{\delta/(2+\delta)} < \infty$. In this case, there exist $\epsilon > 0$ such that
\beao \sum_{t=1}^{m_n} (m_n \beta_{tb_n}/k_n)^{\tfrac{\delta}{2+\delta}} &=& O\Big( b_n^{\big( -\tfrac{(\rho-1)\alpha}{1+\alpha} + \tfrac{\alpha}{\alpha \land p} + \epsilon \big)\tfrac{\delta}{(2+\delta)}}\Big).
\eeao 
Furthermore, for $p > \alpha/2$, notice $\rho > 3 + 2/\alpha > 1 + ({1+\alpha})/({\alpha \land p})$. Similarly as before, notice $\epsilon > 0$ can be made arbitrarily small. Putting everything together, we conclude that \eqref{eq:covariance:beta:mixing} holds.
This completes the proof that {\bf MX}$_\beta$ holds. 
\par 
Moreover, notice that, for all $\epsilon > 0$, 
\beao 
m_n \beta_{b_n} = O\left( n b_n^{-(\rho-1)\frac{\alpha-\epsilon}{1+\alpha-\epsilon}}\right), 
\eeao 
as $n \to \infty$. 
Furthermore, by our assumptions we have $\frac{1}{(\rho-1)}\big(1 + \frac{1}{\alpha - \epsilon} \big) <1,$ 
hence
 taking 
 \beao 
 b_n &\sim& n^{\frac{1}{(\rho-1)}\big(1 + \frac{1}{\alpha - \epsilon} \big)}, 
 \eeao 
 as $n \to \infty$,
 we obtain $m_n \beta_{b_n}\to 0$, as $n \to \infty$.
On the other hand, the sequence $(k_n)$ satisfies 
\beao
k_n \sim  c(p) n \P(|\bfX_0| > x_{b_n}) 
= o( n b_n^{-\frac{\alpha}{p\land \alpha} }), \quad n \to \infty,
\eeao
where in the last line we have used Potter's bounds. 
This implies that for all $\eta > 0$, we can take 
\[
k_n \sim n^{1-\frac{\alpha}{(p\land \alpha)(\rho-1)}\big(1 + \frac{1}{\alpha - \epsilon} \big)}, 
\]
where for $\epsilon$ sufficiently small, we can verify $\frac{\alpha}{(p\land \alpha)(\rho-1)}\big(1 + \frac{1}{\alpha-\epsilon} \big) < 1$. 
This concludes the proof of Theorem~\ref{eq:thm:linear:tcl}.
\par 
\qed

\subsection{Proof of Proposition~\ref{prop:sre:cs}}\label{sec:proof:sre:cs}

Let $(\bfX_t)$ be the stationary solution to the SRE \eqref{eq:sre:def} as in Example~\ref{example:sre}, satisfying ${\bf RV}_\alpha$, for $\alpha >0$. Then, $(\bfX_t)$ admits the causal representation in \eqref{eq:sre:representation}, where $((\bfA_t,\bfB_t))$ is a sequence of iid innovations. Then, backward computations yield
\beam\label{sre:representation}
\bfX_t &=& \Pi_{t} \bfX_0 + R_t, \qquad t \ge 1,
\eeam
where for $1 \le i \le t$
\beam\label{eq:Pi} \Pi_{i,t} := \bfA_{i} \cdots \bfA_t, \qquad R_t := \sum_{j=1}^{t} \Pi_{j+1,t} {\bfB}_j,
\eeam 
with the conventions: $\Pi_{1,t} = \Pi_t$, and $\Pi_{t+1,t} = Id$. Notice that the remaining term $R_t$ is measurable with respect to $\sigma\big( (\bfA_i,{\bfB}_i)_{1\le i \le t} \big)$, and is independent of the sigma-field $\sigma\big( (\bfX_t)_{t\le 0 } \big)$. 
\par 
Condition ${\bf AC}$ has been shown for Theorem 4.17 in \cite{mikosch:wintenberger:2013}. We focus on showing ${\bf CS}_p$ holds for $p \in( \alpha/2,\alpha)$.
\par 
To begin, note condition ${\bf CS}_p$ was borrowed from Equation (5.2) in \cite{buritica:mikosch:wintenberger:2021}. For $p \in (0,\alpha)$, and sequences $(x_n)$ such that $n/x_n^p \to 0$, as $n \to \infty$, we have $n\E[|\bfX_0/x_n|^p] \to 0$, thus our condition ${\bf CS}_p$ and  Equation (5.2) in \cite{buritica:mikosch:wintenberger:2021} coincide. More precisely, we show
\beam\label{eq:sre:11}
 \lim_{\epsilon \downarrow 0} \lim_{n \to \infty} \, \frac{\P\big( \big| \| \overline{\bfX_{[1,n]}/x_n}^\epsilon \|^p_p  - \E[\| \overline{\bfX_{[1,n]}/x_n}^\epsilon \|^p_p\big|  > \delta   \big)}{n \P(|\bfX_1| > x_n)}  &=& 0.
\eeam
For this reason, we focus on showing \eqref{eq:sre:11} holds.
Actually, we show below that, for $(x_n)$ as in Proposition~\ref{prop:sre:cs},  condition ${\bf CS}_p$ holds over uniform regions $\Lambda_n = (x_n,\infty)$ in the sense of \eqref{cs:sre:bound1}. Further, for the purposes of completeness, we show \eqref{cs:sre:bound1} holds generally for sequences $(x_n)$ such that $n\P(|\bfX_0| > x_n) \to 0$, as $n \to \infty$ in the setting of Example~\ref{example:sre}.
\par 
Let $p \in ( \alpha/2,\alpha)$, and consider a sequence $(x_n)$ satisfying the assumptions of Proposition~\ref{prop:sre:cs}. Consider the region $\Lambda_n = (x_n,\infty)$, and consider $x \in \Lambda_n$.  An application of Chebychev's inequality yields
\beam  \label{eq:cheby:ineq}
\P\big( \big| \| \overline{\bfX_{[1,n]}/x}^\epsilon \|^p_p  - \E[\| \overline{\bfX_{[1,n]}/x}^\epsilon \|^p_p\big|  > \delta   \big) &\le& \,2\,n\, \delta^{-2} \sum_{t=0}^n I_t\, ,
\eeam
such that we denote $I_t = \Cov( | \overline{\bfX_0/x}^\epsilon|^p,| \overline{\bfX_t/x}^\epsilon|^p)$. 
\par 
Let $(\Pi_t)$ and $(R_t)$ be as in \eqref{eq:Pi} such that $(\bfX_t)$ satisfies Equation~\eqref{eq:sre:representation}. We define a new Markov chain $(\bfX_t^\prime)_{t \ge 0}$ satisfying 
\beam \label{eq:sre:prime}
\bfX^\prime_t &:=& \Pi_t \bfX^\prime_0 + R_t\,,
\eeam 
with $\bfX^\prime_0$ independent of $(\bfX_t)$ and identically distributed as $\bfX_0$. We can see $(\bfX^\prime_t)$ as the solution of the SRE \eqref{eq:sre:def} for the sequence of innovations $\big((\bfA_t^{\prime}, {\bfB}_t^\prime ) \big)$ where $(\bfA_t^\prime,{\bfB}_t^\prime) = (\bfA_t, {\bfB}_t)$ for $t \le 0$ and $(\bfA_t^\prime,{\bfB}_t^\prime)_{t \ge  1}$ is an iid sequence independent of $(\bfA_t,{\bfB}_t)$, distributed as the generic element $(\bfA,\bfB)$. Then, following the notation in \eqref{eq:cheby:ineq}, we can rewrite $I_t$ as
\beao 
I_t &=& \E\big[|\overline{\bfX_0/x}^\epsilon|^p|\overline{\bfX_t/x}^\epsilon|^p\big] - \E\big[|\overline{\bfX_0/x}^\epsilon|^p |\overline{\bfX_t^\prime/x}^\epsilon|^p\big]  \\
&\le& \E\big[|\overline{\bfX_0/x}^\epsilon|^p\,\big( |\overline{\bfX_t/x}^\epsilon|^p -  |\overline{\bfX_t^\prime/x}^\epsilon|^p \big)_{+}\, \big] \\
&\le& \E\big[|\overline{\bfX_0/x}^\epsilon|^p\,\big( |{\bfX_t/x}|^p -  |{\bfX_t^\prime/x}|^p \big)_{+}\,\1(|\bfX_t/x| \le \epsilon) \1( |\bfX_t^\prime/x| \le \epsilon) \big] \\
&&+ \E\big[|\overline{\bfX_0/x_n}^\epsilon|^p\,|\overline{\bfX_t/x}^\epsilon|^p \1(| \bfX_t^\prime/x| > \epsilon) \big] \\
&=& I_{t,1} + I_{t,2}.
\eeao 
We show that $I_{t,1}$ is negligible letting first $n \to \infty$, and then $\epsilon \downarrow 0$. For this, we consider two cases. First, assume $p > 1$. Then, for the first term $I_{t,1}$, a Taylor decomposition yields 
\beao 
\lefteqn{I_{t,1}}\\ 
&\le& p\, \E\big[\, |\overline{\bfX_0/x}^\epsilon|^p |\overline{\bfX_t^\prime/x -\bfX_t/x}^{2\,\epsilon}| | \overline{\bfX_t^\prime/x}^\epsilon + \xi\big(\overline{\bfX_t^\prime/x -\bfX_t/x}^{2\epsilon} \big)|^{p-1} \, \big] \\
&=& p\, \E\big[\, |\overline{\bfX_0/x}^\epsilon|^p |\overline{\Pi_t \bfX_0^\prime/x -\Pi_t \bfX_0/x}^{2\,\epsilon}| \\
&& \qquad \times | \overline{\bfX_t^\prime/x}^\epsilon + \xi\big(\overline{\Pi_t \bfX_0^\prime/x -\Pi_t \bfX_0/x}^{2\epsilon} \big)|^{p-1} \, \big] ,
\eeao 
for some random variable $\xi \in (0,1)$ a.s. In the last equality, we have used the definition of $(\bfX^\prime)$ in \eqref{eq:sre:prime}. Moreover, we can bound $I_{t,1}$ by
\beao 
I_{t,1}&\le& p 2^{0\lor (p-1)} \, \E\big[\, |\overline{\bfX_0/x}^\epsilon|^p |\overline{\Pi_t \bfX_0^\prime/x -\Pi_t \bfX_0/x}^{2\,\epsilon}| | \overline{\bfX_t^\prime/x}^\epsilon|^{p-1}\big] \\
&& + \, p 2^{0\lor (p-1)} \, \E\big[\, |\overline{\bfX_0/x}^\epsilon|^p |\overline{\Pi_t \bfX_0^\prime/x -\Pi_t \bfX_0/x}^{2\,\epsilon}|^p\big] 
\eeao 
Now, an application of Jenssen's inequality, Potter's bounds, and Karamata's theorem, yield
\beao 
I_{t,1} &\le& c \, \E\big[\, |\overline{\bfX_0/x}^\epsilon|^{2p}]^{1/2} \E\big[ |\overline{\Pi_t \bfX_0^\prime/x}^{4\,\epsilon}|^{2p}\big]^{1/2}\\
&\le& c \, \big(\E\big[\, |\overline{\bfX_0/x}^\epsilon|^{2p}] \E\big [|\Pi_t|_{op}^{\alpha-\delta}\big] \P(|\bfX_0| > x) \big)^{1/2}. \\
&=& O\big( \big(\epsilon^{2p - \alpha} \E[|\Pi_t|_{op}^{\alpha-\delta}\big]\big)^{1/2} \P(|\bfX_0| > x) \, \big), \quad  x_n > x_0,
\eeao 
for constants $c>0,$ $x_0 >0$. Moreover, under the assumptions of Example~\ref{example:sre} we have $\E[|\Pi_t|_{op}^{\a - \delta}] < 1$ for $t$ sufficiently large. Thereby, we conclude \beam\label{eq:sre:I}
\lim_{\epsilon \downarrow 0}\limsup_{n \to  \infty} \sum_{t=1}^n I_{t,1}/\P(|\bfX_0| > x) &=& 0.
\eeam
We now come back to the case where $p < 1$. In this case we can use a subadditivity argument and we conclude by similar steps that relation \eqref{eq:sre:I} holds for all $p \in (\alpha/2,\alpha).$
\par 
Now, concerning the second term $I_{t,2}$ we have
\beao  
I_{t,2} &:=& \E\big[|\overline{\bfX_0/x}^\epsilon|^p\,|\overline{\bfX_t/x}^\epsilon|^p \1(| \bfX_t^\prime/x| > \epsilon) \big] \\
&\le& \E\big[|\overline{\bfX_0/x}^\epsilon|^p\,|\overline{\Pi_t \bfX_0/x}^\epsilon|^p \1(| \bfX_t^\prime/x| > \epsilon) \big] \\
&&+ \E\big[|\overline{\bfX_0/x}^\epsilon|^p\,|\overline{R_t/x}^{\epsilon}|^p \1(| \bfX_t^\prime/x| > \epsilon) \big]  \\
&&+ \E\big[|\overline{\bfX_0/x}^\epsilon|^p\,\1(|\Pi_t \bfX_0/x|  > \epsilon) \1(| \bfX_t^\prime/x| > \epsilon) \big] \\
&=& O( \E \big[|\overline{\bfX_0/x}^\epsilon|^p\big] \E\big[|\overline{R_t/x}^{
\epsilon}|^p \1(| \bfX_t^\prime/x| > \epsilon) \big])\,. 
\eeao 
Therefore we have,
\beao 
 \sum_{t=1}^{n} I_{t,2}/\P(|\bfX_0| > x) 
&=& O\big(n\E \big[|\overline{\bfX_0/x}^\epsilon|^p\big]\big).
\eeao 
Assume $(x_n)$ is such that there exists $\kappa' > 0$ satisfying $n/x_n^{p \land (\alpha-\kappa')}\to 0$, as $n \to \infty$. Then,
\beam\label{eq:sre:unif}
 \lim_{\epsilon \downarrow 0} \lim_{n \to \infty}\sup_{x \in \Lambda_n} \, \frac{\P\big( \big| \| \overline{\bfX_{[1,n]}/x}^\epsilon \|^p_p  - \E[\| \overline{\bfX_{[1,n]}/x}^\epsilon \|^p_p\big|  > \delta   \big)}{n \P(|\bfX_1| > x)}  &=& 0.
\eeam
Moreover, if $n/x_n^{p}\to 0$ then $\E[\| \overline{\bfX_{[1,n]}/x}^\epsilon \|^p_p] \to 0$ for $p \in (\alpha/2,\alpha)$. In this case,  ${\bf CS}_p$ holds uniformly over the region $\Lambda_n$.
\par 
On the other hand, note that we also have $I_t = O(\beta_t)$. Therefore, if we consider a sequence $(\ell_n)$ such that $\ell_n \to \infty$, $n\to \infty$, then we can have
\beam\label{cs:sre:bound1}
\lefteqn{\P\big( \big| \| \overline{\bfX_{[1,n]}/x_n}^\epsilon \|^p_p  - \E[\| \overline{\bfX_{[1,n]}/x_n}^\epsilon \|^p_p\big|  > \delta   \big)/n\P(|\bfX_0| > x_n)}\\
&\le& 2\,n\, \delta^{-2} (\sum_{t=0}^{\ell_n} I_t + \sum_{t= \ell_n+1}^n I_t)/n\P(|\bfX_0| > x_n) \nonumber  \\
&\le& O( \ell_n \E[ |\overline{\bfX_0/x_n}^\epsilon|^p] + \sum_{t= \ell_n+1}^n I_t/\P(|\bfX_0| > x_n)) \nonumber\\
&\le& O( \ell_n \E[ |\overline{\bfX_0/x_n}^\epsilon|^p] +  \sum_{t= \ell_n+1}^n \beta_t/\P(|\bfX_0| > x_n)\, ) \; = \; J_1 + J_2.\label{cs:sre:bound2}
\eeam
where in the last bound we use the covariance inequality for the $(\beta_t)$ mixing coefficients. 

Furthermore, the bound in \eqref{cs:sre:bound1} consists of two terms as $\eqref{cs:sre:bound1} \le J_1 + J_2$. If we want $J_1$ to go to zero as $n\to\infty$ we can choose $\ell_n := x_n^{p-\delta}$, for some $\delta > 0$. Now, for the second term $J_2$, we first note that it is null if $\ell_n>n$ by convention. Otherwise we recall that the mixing-coefficients $(\beta_t)$ have a geometric decaying rate. Thereby, there exists $\rho \in (0,1)$ such that we can bound the second term $J_2$ by 
\beao 
J_2 &=&  O\big(\mbox{$\sum_{t=\ell_n+1}^n$} \rho^t/\P(|\bfX_0| > x_n) \big) \\
 &=& O(\rho^{\ell_n}/\P(|\bfX_0| > x_n)) \\
 &\le& O(\rho^{\ell_n} x_n^{(\alpha + \delta) } ).
\eeao 
Therefore, $J_2 \to 0$ as $n \to \infty$ by plugging in the value we set for $\ell_n$. Overall, we conclude that for all sequences $(x_n)$ such that $n\P(|\bfX_n| > x_n) \to 0 $, then $\lim_{n \to \infty}\eqref{cs:sre:bound1}= 0$ and this shows \eqref{eq:sre:11}. Moreover, we also saw this convergence holds over uniform regions $\Lambda_n = (x_n,\infty)$, in the sense \eqref{eq:sre:unif}, if we assume in addition $n/x_n^{p\land(\alpha-\kappa')} \to 0$, as $n \to \infty$. Finally, this shows that ${\bf CS}_p$ holds and this concludes the proof of Proposition~\ref{prop:sre:cs}.

\subsection{Proof of Theorem~\ref{thm:sre:clt}}\label{proof:sre:clt}
Our goal it to verify that we can apply Theorem~\ref{thm:main} as we combine Proposition~\ref{prop:sre:cs} and \ref{prop:mixing}. First, notice for any $\kappa' >0$, if we consider a sequence $(x_n)$ such that $x_n \in ( n^{\kappa' + 1/(p\land \alpha)},+\infty)$, then conditions {\bf AC}, ${\bf CS}_p$ hold thanks to Proposition~\ref{prop:sre:cs}. Since $c(p) <\infty$ in \eqref{eq:constant:cp:1:tcl}, Proposition \ref{prop:existence:cluster:process} holds and the time series admits a $p-$cluster process $\bfQ^{(p)}$.  Fix $\kappa' >0$, and $x_n = O(n^{\kappa' + 1/(p\land\alpha)})$.
\par 
We focus now on the verification of the mixing condition 
${\bf MX}_\beta$ in Theorem~\ref{thm:main}. Applying Proposition~\ref{prop:mixing}, there exists $\rho \in (0,1)$ such that the coefficients $(\beta_t)$ satisfy $\beta_t = O(\rho^t)$. 
Moreover, if we choose $(k_n)$ according to \eqref{eq:k:3} as in the linear model case then there exists $\epsilon,\epsilon^\prime > 0$, and a constant $c > 0$, such that
\beao 
m_n/k_n  &\le&  c\,  b_n^{ -1 + \tfrac{\alpha}{p\land \alpha} + \epsilon^\prime},
\eeao 
and thus
$\sum_{t=1}^{m_n}(m_n\beta_{b_n}/k_n )^{\delta/(2+\delta)} = O( \rho^{b_n\delta/(2+\delta)}(b_n)^\epsilon),$ which goes to zero as $n \to \infty$. Moreover, for all $\eta \in (0,1)$, choosing $\ell_n = b_n^\eta $, we have $m_n\beta_{\ell_n}/k_n \to 0$, $b_n/\ell_n \to 0$, as $n \to 0$. Therefore, we have verified ${\bf MX}_\beta$ holds. 
Moreover, taking $(b_n)$ as 
$b_n \sim n^{1-\eta}$, for any, $\eta \in (0,1)$, implies $m_n \beta_{b_n} \to 0,$ as $n \to \infty$.
This verifies all assumptions of Theorem~\ref{thm:main}.

It remains to verify that {\bf M} and {\bf S} hold. 
To verify { \bf M} we check $\E[\|\bfQ\|_{\a-\vep}^{2\a }]<\infty$. We use the definition of $\bfQ$, a telescoping sum argument and the time-change formula to obtain
\beao
\lefteqn{\E\big[\|\bfQ\|_{\a-\vep}^{2\a }\big]}\\
&=& \E\Big[\dfrac{(\sum_{t\in\Z}|\bfTh_t|^{\a-\vep})^{{2\a }/(\a-\vep)}}{\|\bfTh\|_\a^{{2\a }}}\Big]\\
&=& \sum_{j\in\Z}\E\Big[\dfrac{(\sum_{t\ge j}|\bfTh_t|^{\a-\vep})^{{2\a }/(\a-\vep)}-(\sum_{t\ge j+1}|\bfTh_t|^{\a-\vep})^{{2\a }/(\a-\vep)}}{\|\bfTh\|_\a^{{2\a }}}\Big]\\
&=& \sum_{j\in\Z}\E\Big[|\bfTh_j|^\a\dfrac{(\sum_{t\ge 0}|\bfTh_t|^{\a-\vep})^{{2\a }/(\a-\vep)}-(\sum_{t\ge 1}|\bfTh_t|^{\a-\vep})^{{2\a }/(\a-\vep)}}{\|\bfTh\|_\a^{{2\a }}}\Big]\\
&=&\E\Big[\dfrac{(\sum_{t\ge 0}|\bfTh_t|^{\a-\vep})^{{2\a }/(\a-\vep)}-(\sum_{t\ge 1}|\bfTh_t|^{\a-\vep})^{{2\a }/(\a-\vep)}}{\|\bfTh\|_\a^{\alpha}}\Big]
\eeao
By convexity of the function $g(x)=x^{{2\a }/(\a-\vep)}$ we obtain
\beao 
\Big(1+\sum_{t\ge 1}|\bfTh_t|^{\a-\vep}\Big)^{{2\a }/(\a-\vep)}&-&\Big(\sum_{t\ge 1}|\bfTh_t|^{\a-\vep}\Big)^{{2\a }/(\a-\vep)}\\
&\le& \dfrac{2\a}{\a-\vep}\Big(\sum_{t\ge 1}|\bfTh_t|^{\a-\vep}\Big)^{{\a+\epsilon }/(\a-\vep)}\\
&\le& \dfrac{2\a}{\a-\vep} \sum_{t\ge 1}|\bfTh_t|^{{\a+\epsilon }}\,.
\eeao
Then, 
\beao
\E[\|\bfQ\|_{\a - \epsilon}^{2\a} ] &\le& \dfrac{2\a}{\a-\vep} \E\Big[\sum_{t\ge 1}\frac{|\bfTh_t|^{{\a+\epsilon}} }{\|\bfTh\|_\a^{\alpha}}\Big] \,\le \, \dfrac{2\a}{\a-\vep} \E\big[\sum_{t\ge 1}|\bfTh_t|^{\epsilon } \big].
\eeao
The latter expression is integrable because $\E[|\bfA|^{\kappa'}_{op}]<1$ and $\E[|\bfTh_t|^{\vep}]=\E[|\bfA_1\cdots\bfA_t|^{\vep}_{op}]\le\E[|\bfA|^{\kappa'}_{op}]^{t/\kappa'}$ for every $\vep<\kappa'$. Thus {\bf M} holds in Example~\ref{example:sre}.

\par 
Finally,
to verify {\bf S}
we rely on Lemma~\ref{lem:verif:S}.
Actually, Equation \eqref{eq:moment:cond} has already been demonstrated in Proposition~\ref{prop:sre:cs}; and it suffices to follow the lines of the proof of Equation~\eqref{eq:cheby:ineq}.
We can therefore conclude that ${\bf S}$ holds
for the $\widehat \a-$cluster based estimators from Section~\ref{sec:examples:cluster} in \eqref{eq:estimator:Ei} \eqref{eq:estimator:c1}, and \eqref{eq:cluster:sizes}.

\qed

\begin{lemma}\label{lem:verif:S}
Consider $(\bfX_t)$ to be a time series with values in $(\mathbb{R}^d, |\cdot|)$, and consider the $\widehat \a-$cluster based estimators from Section~\ref{sec:examples:cluster} in \eqref{eq:estimator:Ei} \eqref{eq:estimator:c1}, and \eqref{eq:cluster:sizes}. 
Assume there exists $\eta > 0$ such that $\alpha - \epsilon - \eta > \epsilon/2$ and 
\beam\label{eq:moment:cond}
\lim_{n \to \infty}\frac{\E[(\|\overline{ \bfX_{[1,n]}/x_n}^\delta\|^{\alpha-\epsilon}_{\alpha-\epsilon} 
-\E[\|\overline{ \bfX_{[1,n]}/x_n}^\delta\|^{\alpha-\epsilon}_{\alpha-\epsilon} ])^2]}{n\P(|\bfX_1| > x_n)} < \infty.
\eeam 
Then, condition ${\bf S}$ holds.
\end{lemma}
\begin{proof}{Proof of Lemma~\ref{lem:verif:S}}
For this we appeal to Remark~\ref{remark:check:S:ei}, which shows the functionals $\partial f_q/\partial q |_{q=\a}$ and $ \sup_{q^\prime \in [\alpha - \epsilon, \alpha + \epsilon]}\partial^2f_q/\partial q^2|_{q=q^\prime}$ are continuous functions in $\mathcal{G}_+(\ell^\alpha)$.
Moreover, we take the example  of the extremal index in \eqref{eq:estimator:Ei} with $f(\bfx) = \|\bfx\|^\a_{\infty}/\|\bfx\|^\a_\a$, 
as for \eqref{eq:estimator:c1}, and \eqref{eq:cluster:sizes} similar calculations yield the desired result. 
Then, to verify {\bf S} note that on the event $\{ \|\bfX_{[1,n]}\|_{q^\prime} > x_{b_n}\} $ we have, for $q^\prime > \alpha - \epsilon$,
\beao 
0 \quad < \quad  \lefteqn{ 
 \frac{\partial f_q(\bfX_{[1,n]} ) }{\partial q}\big|_{q=q^\prime} } \\
&=& 
\frac{\| \bfX_{[1,n]} \|_\infty^{q}}{\| \bfX_{[1,n]} \|_{q^\prime}^{q^\prime}} 
\sum_{t=1}^{n}
\frac{|\bfX_t|^{q^\prime}}{\| \bfX_{[1,n]} \|_{q^\prime}^{q^\prime}}
\log(\|\bfX_{[1,n]}\|_\infty/|\bfX_t |)\\
 && \qquad\le  \sum_{t=1}^{n}
\frac{|\bfX_t|^{\a-\epsilon}}{\| \bfX_{[1,n]} \|_{q^\prime}^{\a-\epsilon}} \\
&& \qquad\qquad  =
\frac{\|\bfX_{[1,n]}/x_n\|_{\a-\epsilon}^{\a-\epsilon}}{\| \bfX_{[1,n]}/x_n \|_{q^\prime}^{\a-\epsilon}} 
\quad \le \quad \|\bfX_{[1,n]}/x_n\|_{\a-\epsilon}^{\a-\epsilon}.
\eeao 
Similar calculations yield
\beam \label{eq:remain:log}
\lefteqn{ 
\Big| 
\frac{\partial^2 f_q(\bfX_{[1,n]} ) }{\partial q^2}\big|_{q=q^\prime} \big| } \nonumber\\
& = & 
\frac{\| \bfX_{[1,n]} \|_\infty^{q}}{\| \bfX_{[1,n]} \|_{q^\prime}^{q^\prime}} 
\sum_{t=1}^{n}\sum_{j=1}^{n}
\frac{|\bfX_t|^{q^\prime}}{\| \bfX_{[1,n]} \|_{q^\prime}^{q^\prime}}
\frac{|\bfX_j|^{q^\prime}}{\| \bfX_{[1,n]} \|_{q^\prime}^{q^\prime}}\log(\|\bfX_{[1,n]}\|_\infty/|\bfX_j |)|\log(| \bfX_j |/| \bfX_t |)| \nonumber\\
&\le& 
\sum_{t=1}^{n}\sum_{j=1}^{n}
\frac{|\bfX_t|^{q^\prime}}{\| \bfX_{[1,n]} \|_{q^\prime}^{q^\prime}}
\frac{|\bfX_j|^{\alpha-\epsilon}}{\| \bfX_{[1,n]} \|_{q^\prime}^{\alpha-\epsilon}}|\log(| \bfX_j |/| \bfX_t |)| \nonumber \\
&\le& \|\bfX_{[1,n]}/x_n\|_{\a-\epsilon}^{\a-\epsilon} \nonumber  \\
&&- \sum_{t=1}^{n}\sum_{j=1}^{n}
\frac{|\bfX_t|^{q^\prime}}{\| \bfX_{[1,n]} \|_{q^\prime}^{q^\prime}}
\frac{|\bfX_j|^{\alpha-\epsilon}}{\| \bfX_{[1,n]} \|_{q^\prime}^{\alpha-\epsilon}}
\log(| \bfX_t |/| \bfX_j |) \1(|\bfX_j|/|\bfX_t| < 1). \nonumber\\
&=& \|\bfX_{[1,n]}/x_n\|_{\a-\epsilon}^{\a-\epsilon}  + R\nonumber\\
&&
\eeam 
To study \eqref{eq:remain:log} , we handle separately the two remaining terms. 
Regarding the remaining term $R$ in \eqref{eq:remain:log}, note,
\beao 
\lefteqn{
0 \quad < \quad R}\\
&=& \sum_{t=1}^{n}\sum_{j=1}^{n}
\frac{|\bfX_t|^{q^\prime}}{\| \bfX_{[1,n]} \|_{q^\prime}^{q^\prime}}
\frac{|\bfX_j|^{\alpha-\epsilon}}{\| \bfX_{[1,n]} \|_{q^\prime}^{\alpha-\epsilon}}
\log(| \bfX_t |/| \bfX_j |) \1(|\bfX_t|/|\bfX_j| > 1)\\
&\le& 
\sum_{t=1}^{n}\sum_{j=1}^{n}
\frac{|\bfX_t|^{q^\prime}}{\| \bfX_{[1,n]} \|_{q^\prime}^{q^\prime}}
\frac{|\bfX_j|^{\alpha-\epsilon}}{\| \bfX_{[1,n]} \|_{q^\prime}^{\alpha-\epsilon}}
\left(\frac{| \bfX_t |}{| \bfX_j |}\right)^\epsilon \1(|\bfX_j|/|\bfX_t| > 1)\\
&=&
\frac{\| \bfX_{[1,n]} \|_{\alpha-2\epsilon}^{\alpha-2\epsilon}}{\| \bfX_{[1,n]} \|_{q^\prime}^{\alpha-\epsilon}} \sum_{t=1}^{n}
\frac{|\bfX_t|^{q^\prime}}{\| \bfX_{[1,n]} \|_{q^\prime}^{q^\prime}}
| \bfX_t |^\epsilon.
\eeao 
Then, by an application of Hölder's inequality, we obtain
\beao 
R &\le&
\frac{\| \bfX_{[1,n]} \|_{\alpha-2\epsilon}^{\alpha-2\epsilon}}{\| \bfX_{[1,n]} \|^{\alpha-\epsilon}_{q^\prime}} 
\left(\sum_{j=1}^{n}
\frac{|\bfX_t|^{pq^\prime}}{\| \bfX_{[1,n]} \|_{q^\prime}^{pq^\prime}}\right)^{1/p} \left(\sum_{j=1}^{n}|\bfX_t|^{p^\prime\epsilon}\right)^{1/p^\prime}. \\
&=&
\frac{\| \bfX_{[1,n]} \|_{\alpha-2\epsilon}^{\alpha-2\epsilon}}{\| \bfX_{[1,n]} \|^{\alpha-\epsilon}_{q^\prime}} 
\frac{\| \bfX_{[1,n]} \|_{pq^\prime}^{q^\prime}}{\| \bfX_{[1,n]} \|^{q^\prime}_{q^\prime}}  \|\bfX_{[1,n]} \|_{p^\prime \epsilon}^{\epsilon} \\
&\le& \frac{\| \bfX_{[1,n]} \|_{\alpha-2\epsilon}^{\alpha-2\epsilon}}{\| \bfX_{[1,n]} \|^{\alpha-\epsilon}_{q^\prime}} 
\|\bfX_{[1,n]} \|_{p^\prime \epsilon}^{\epsilon}
\eeao 
for $p > 1$, and $1/p + 1/p^\prime = 1$. 
Then, we see it is enough to show that for $\eta < \epsilon < 1$,
\beao 
\lim_{n \to \infty}
\frac{
\E\big[\big(\|\bfX_{[1,n]}/x_n\|_{\alpha-2\epsilon}^{\alpha - 2\epsilon}\|\bfX_{[1,n]} /x_n\|_{\epsilon}^{\epsilon}\big)^{1+\eta}\1(\|\bfX_{[1,n]}\|_{q^\prime} > x_n)\big]}{\P(\|\bfX_{[1,n]}\|_{q^\prime} > x_n)} < \infty. 
\eeao 
Moreover, by an application of Hölder's inequality,
\beao \label{eq:remainder}
\lefteqn{
\E\big[
\big(\|\bfX_{[1,n]}/x_n\|_{\alpha-2\epsilon}^{\alpha - 2\epsilon}\|\bfX_{[1,n]} /x_n\|_{\epsilon}^{\epsilon}\big)^{1+\eta}\1(\|\bfX_{[1,n]}\|_{q^\prime} > x_n)
\big] }\\
&\le& 
\E\big[\big(\|\bfX_{[1,n]}/x_n\|_{\alpha-\epsilon}^{\alpha - \epsilon}\big)^{1+\eta}\big]\\
&\le& 
\E\big[\|\bfX_{[1,n]}/x_n\|_{\alpha-\epsilon+\eta}^{\alpha - \epsilon+\eta}
\big],
\eeao 
where the last inequality follow by subadditivity when $\eta < 1$.
Therefore, it is enough to show there exists $\epsilon, \eta > 0$ such that $\alpha - \epsilon - \eta > \epsilon/2$,
\beam \label{eq:alpha:1:term} 
\lim_{n \to \infty}\frac{
\E[\|
\bfX_{[1,n]}/x_n\|_{\alpha-\epsilon}^{\alpha-\epsilon}\1(\|\bfX_{[1,n]}\|_{\alpha-\epsilon-\eta } >x_n) ]}{\P(\|\bfX_{[1,n]}\|_{\alpha-\epsilon} >x_n)} < \infty.
\eeam 
We verify that for all $\epsilon, \eta > 0$ such that $\alpha - \epsilon - \eta > \epsilon/2$, and \eqref{eq:alpha:1:term} holds and an application of Lemma~\ref{lem:verif:S} yields the desired result.
Indeed, notice for all $\delta > 0$,
\beam \label{eq:term:norm:alpha}
\lefteqn{ 
\frac{\E[\|\bfX_{[1,n]}/x_n\|_{\alpha-\epsilon}^{\alpha-\epsilon}\1(\|\bfX_{[1,n]}\|_{\alpha-\epsilon - \eta } >x_n) ]}{\P(\|\bfX_{[1,n]}\|_{\alpha-\epsilon} >x_n)} }\nonumber \\
&=&
\frac{\E[\|\overline{\bfX_{[1,n]}/x_n}^\delta\|_{\alpha-\epsilon}^{\alpha-\epsilon}\1(\|\bfX_{[1,n]}\|_{\alpha-\epsilon- \eta} >x_n) ]}{\P(\|\bfX_{[1,n]}\|_{\alpha-\epsilon} >x_n)}  \nonumber \\
& +&
\frac{\E[\|\underline{ \bfX_{[1,n]}/x_n}_\delta\|^{\alpha-\epsilon}_{\alpha-\epsilon}\1(\|\bfX_{[1,n]}\|_{\alpha-\epsilon- \eta} >x_n) ]}{\P(\|\bfX_{[1,n]}\|_{\alpha-\epsilon} >x_n)} \nonumber  \\
&=& I + II,
\eeam 
and here we recall the notation $\overline{\bfX_{[1,n]}/x_n}^\delta = (\overline{\bfX_1/x_n}^\delta, \dots, \overline{\bfX_{n}/x_n}^\delta)$, and $\underline{\bfX_{[1,n]}/x_n}_\delta = (\underline{\bfX_1/x_n}_\delta, \dots, \underline{\bfX_{n}/x_n}_\delta).$
Next, we treat $I$ and $II$ separately.
For $II$ it is easy to see 
\beao 
I &\le& n \frac{\E[ |\underline{\bfX_1/x_n}_\delta|^{\alpha - \epsilon} ]}{\P(\|\bfX_{[1,n]}\|_{\alpha-\epsilon} >x_n)}  \quad =\quad \frac{n\P(|\bfX_1| > x_n)}{\P(\|\bfX_{[1,n]}\|_{\alpha - \epsilon} > x_n )} \frac{\E[ |\underline{\bfX_1/x_n}_\delta|^{\alpha - \epsilon} ]}{\P(|\bfX_1| > x_n)}. 
\eeao 
Then, by an application of Karamata's theorem we see $\lim_{n \to \infty} II < \infty $.
Now to treat $I$, recall $\E[\|\underline{ \bfX_{[1,n]}/x_n}_\delta\|^{\alpha-\epsilon}_{\alpha-\epsilon}] \to 0$, as $n \to \infty$ since $\E[|\bfX_1|^{\alpha-\epsilon}] < \infty$ and $n/x_n^{\alpha-\epsilon} \to 0$, as $n \to \infty$.
Hence, by Equation~\eqref{eq:moment:cond}, 
we conclude $\lim_{n \to \infty}(I + II )< \infty$, following the notation in \eqref{eq:term:norm:alpha}, and this completes the proof. 
\end{proof}

}}

\end{document}